\def\rconj{w}
\def\rconjmoins{v}
\newtheorem{lem}{Lemma}[section]
\newtheorem{prop}{Proposition}[section]
\newtheorem{cor}{Corollary}[section]
\newtheorem{thm}{Theorem}[section]
\newtheorem{defi}{Definition}[section]
\newtheorem{rem}{Remarks}[section]
\newtheorem{remark}{Remark}[section]
\newfont{\sBlackboard}{msbm10 scaled 900}
\newcommand{\dd}     {{\rm d}}
\newcommand{\mylabel}[1]{\label{#1}
            \ifx\undefined\stillediting
            \else \fbox{$#1$}\fi }
\newcommand{\BE}{\begin{equation}}
\newcommand{\EEQ}{\end{equation}}
\newcommand{\rfb}[1]{\mbox{\rm
   (\ref{#1})}\ifx\undefined\stillediting\else:\fbox{$#1$}\fi}
\newfont{\Blackboard}{msbm10 scaled 1200}
\newfont{\roma}{cmr10 scaled 1200}
\def\CC{\rm \hbox{C\kern-.56em\raise.4ex
         \hbox{$\scriptscriptstyle |$}\kern+0.5 em }}
\newcommand{\rline}  {{\mathbb R}}
\newcommand{\ud}{\mathrm{d}}
\newcommand{\re}{\mathrm{Re}}
\newcommand{\R}{\mathbb{R}}
\newcommand{\N}{\mathbb{N}}
\newcommand{\mm}    {{\hbox{\hskip 0.5pt}}}
\newcommand{\bluff} {{\hbox{\raise 15pt \hbox{\mm}}}}
\def\section{\@startsection {section}{1}{\z@}{-3.5ex plus -1ex minus
    -.2ex}{2.3ex plus .2ex}{\large\bf}}
\def\be{\begin{equation}}
\def\ee{\end{equation}}
\def\beqs{\begin{eqnarray*}}
\def\eeqs{\end{eqnarray*}}
\def\pd{\partial}
\def\ds{\displaystyle}
\let \div \relax
\DeclareMathOperator{\div}{div}
\newcommand{\nablag}{\nabla_{\!\! g}}
\newcommand{\nablatg}{\nabla_{\!\! \tilde g}}
\newcommand{\Opt}{\ensuremath{\mathrm{Op}_{\mbox{\tiny ${\mathsf T}$}}}}
\newcommand{\Ssct}{S_{\mbox{\tiny ${\mathsf T}$},\tau}}
\let \Re \relax
\DeclareMathOperator{\Re}{Re}
\let \Im \relax
\DeclareMathOperator{\Im}{Im}
\DeclareMathOperator{\supp}{supp}
\newcommand{\Lsct}{\Lambda_{\mbox{\tiny ${\mathsf T}$},\tau}}
\newcommand{\eps}{{\varepsilon}}
\renewcommand{\SS}{\ensuremath{\mathscr S}}
\newcommand{\Con}{\ensuremath{\mathscr C}}
\begin{document}
\thispagestyle{empty}
\title[Logarithmic stabilization of an acoustic system]{Logarithmic stabilization of an acoustic system with a damping term of Brinkman type}

\author{Ka\"{\i}s AMMARI}
\address{UR Analysis and Control of PDEs, UR 13ES64, Department of Mathematics, Faculty of Sciences of Monastir, University of Monastir, 5019 Monastir, Tunisia and LMV/UVSQ/Paris-Saclay, France} \email{kais.ammari@fsm.rnu.tn}

 \author{Fathi Hassine}
\address{UR Analysis and Control of PDEs, UR 13ES64, Department of Mathematics, Faculty of Sciences of Monastir, University of Monastir, 5019 Monastir, Tunisia} \email{fathi.hassine@fsm.rnu.tn}

\author{Luc ROBBIANO}
\address{Université Paris-Saclay, UVSQ, CNRS, Laboratoire de Mathématiques de Versailles, 78000, Versailles, France.
}
\email{luc.robbiano@uvsq.fr}
\date{}

\begin{abstract}
We study the problem of stabilization for the acoustic system with a spatially distributed damping. Without imposing any hypotheses on the structural properties of the damping term, we identify logarithmic decay of solutions with growing time. Logarithmic decay rate is shown by using a frequency domain method and combines a contradiction argument with the multiplier technique and a new Carleman estimate to carry out a special analysis for the resolvent.
\end{abstract}

\subjclass[2010]{35L04, 93B07, 93B52, 74H55}
\keywords{logarithmic stability, Carleman estimate, resolvent estimate, dissipative hyperbolic system, acoustic equation}

\maketitle

\tableofcontents

\section{Introduction} \label{intro}
We consider the following system of equations:
\be
\label{fluide}
\left\{
\begin{array}{l}
u_t + \nabla r + b \, u = 0, \,\hbox{ in } \Omega \times \rline^+, \\
r_t + \div u = 0, \,\hbox{ in } \Omega \times \rline^+, \\
u \cdot n = 0, \,\hbox{ on } \Gamma \times \rline^+, \\
u(0,x) = u^0(x), \, r(0,x) = r^0(x), \, x \in \Omega,
\end{array}
\right.
\ee
where $\Omega$ is a bounded domain in $\rline^d, \, d\geq2$,
with a smooth boundary $\Gamma$, $\div=\nabla\cdot$ is the divergence operator and $b\in L^{\infty}(\Omega)$,
with $b \ge 0$ on $\Omega$ and {{such that}}
\begin{equation}\label{eq:a}
\exists\ b_->0 \mbox{ such that } b \ge b_-
\mbox{ on } \omega.
\end{equation}
Here $\omega \ne \emptyset$ stands for the open {{subset}} of $\Omega$ on which the feedback is active.
As usual $n$ denotes the unit outward normal vector along $\Gamma$.

\medskip

The system of equations (\ref{fluide}) is a linearization of the \emph{acoustic equation} governing the propagation of 
acoustic waves in a compressible medium, see Lighthill \cite{Lighthill:78,Lighthill:52,Lighthill:54}, where $b \, u$ represents a damping term of Brinkman type. 
This kind of damping arises also in the process of homogenization (see Allaire \cite{Allaire:91}), and is frequently used as a suitable \emph{penalization} 
in fluid mechanics models, see Angot, Bruneau, and Fabrie \cite{AngotBruneauFabrie:99}. Our main goal is to prove the logarithmic decay of solutions of (\ref{fluide}) with growing time.  
 
\medskip

Let $L^2(\Omega)$ denote the standard Hilbert space of square integrable functions in $\Omega$
and its closed subspace $L^2_m(\Omega)=\{f\in L^2(\Omega): \int_\Omega f(x)\,dx=0\}$.
To avoid abuse of notation,
we shall write $\|\cdot\|$ for the $L^2(\Omega)$-norm or the $L^2(\Omega)^d$-norm.

\medskip

Denoting  $H = (L^2(\Omega))^d \times L^2_m(\Omega)$, we introduce the operator
$$
{\mathcal A} = \left(
\begin{array}{ll}
0  & \nabla \\
\div  & 0
\end{array}
\right) : {\mathcal D}({\mathcal A}) = \left\{(u,r) \in H, \, (\nabla r,\div u) \in H, \, u \cdot n_{|\Gamma} = 0 \right\} \subset  H \rightarrow H,
$$
and
$$
{\mathcal B} =  \left( \begin{array}{ll} \sqrt{b} \\ 0 \end{array} \right) \in {\mathcal L}((L^2(\Omega))^d,H), \,
{\mathcal B}^* = \left( \begin{array}{cc} \sqrt{b} & 0 \end{array} \right) \in {\mathcal L}(H, (L^2(\Omega))^d).
$$
We recall that for $u\in  (L^2(\Omega))^d$ with $\div u\in L^2(\Omega)$, $u \cdot n_{|\Gamma} $ make sens in $H^{-1/2}(\Gamma)$
(see Girault-Raviart~\cite[Chp 1, Theorem 2.5]{Girault-Raviart}).
\medskip

Accordingly, the problem (\ref{fluide}) can be recasted in an abstract form:
\be
\label{cauchy}
\left\{
\begin{array}{l}
Z_t (t) + {\mathcal A} Z(t) + {\mathcal B} {\mathcal B}^* Z(t) = 0, \, t > 0, \\
Z(0) = Z^0,
\end{array}
\right.
\ee
where $Z = (u, r)$,
or, equivalently, 
\be
\label{cauchybis}
\left\{
\begin{array}{l}
Z_t (t) = {\mathcal A}_d Z(t), \, t > 0, \\
Z(0) = Z^0,
\end{array}
\right.
\ee
with ${\mathcal A}_d = - {\mathcal A} - {\mathcal B}{\mathcal B}^*$ with ${\mathcal D}({\mathcal A}_d) = {\mathcal D}({\mathcal A}).$

\medskip

It can be shown (see \cite{AFN}) that for any initial data $(u^0, r^0) \in {\mathcal D}({\mathcal A})$ the 
problem \eqref{fluide} admits a unique solution
$$(u,r) \in C([0,\infty); {\mathcal D}({\mathcal A})) \cap C^1([0, \infty); H).$$ Moreover, the solution $(u,r)$ satisfies,
the energy identity
\be
\label{energyid}
E(0) - E(t) =
 \int_0^t
\left\|\sqrt{b} \, u(s)\right\|_{(L^2(\Omega))^d}^2 \dd s, \ \mbox{for all}\ t \geq 0
\ee
with
\be
\label{energy}
E(t) = \frac{1}{2} \,\left\|(u(t),r(t)) \right\|^2_{H}, \, \forall \, t \geq 0, 
\ee
where we have denoted
$$
\left\langle (u,r), (v,p)\right\rangle_H = \int_\Omega \left(u(x) . v(x) + r(x) p(x) \right) \, \ud x , \,  \left\| (u,r)\right\|_H = \sqrt{\int_\Omega \left(\left|u(x)\right|^2 + r^2(x) \right) \, \ud x}.
$$

\medskip

Using (\ref{energyid}) and a standard density argument, we can extend the solution operator for  data 
$(u^0, r^0) \in H$. Consequently, we associate with the problem (\ref{fluide}) (or to the abstract Cauchy problems (\ref{cauchy}) or
(\ref{cauchybis})) a  semi-group that is globally bounded in $H$. 

\medskip

As the energy $E$ is nonincreasing along trajectories, we want to determine the set of initial data $(u^0, r^0)$ for which 
\begin{equation}\label{stab}
E(t) \to 0 \ \mbox{as} \ t \to \infty.
\end{equation}
Such a question is of course intimately related to the structural properties of the function $b$, notably to the geometry of the set $\omega$ on which the damping is effective. In fact, when the damping term is globally distributed Ammari, Feireisl and Nicaise \cite{AFN} showed an exponential decay rate of the energy by the means of an observability inequality associated with the conservative problem of \eqref{fluide}. Besides, it is also shown that if the damping coefficient is not uniformly positive definite (i.e $\ds\inf_{x\in\Omega}b(x)=0$) then the system \eqref{fluide} is not exponentially stable. In this paper we consider a damping which is locally distributed over the domain $\Omega$ without any geometrical control condition in particular this including the case when the damping coefficient is not uniformly positive defined. So we expect to prove a weaker decay rate then given in \cite{AFN}. More precisely, we prove a logarithm decay rate of the energy. Our approach is based in the frequency domain method which consist to prove an exponential loss on the resolvent estimate \cite{batty, burq, Duyckaerts} where the main tool for establishing a such decay is the Carleman estimate. 

\medskip

The theory of Carleman estimates for scalar equations is rather well developed by now. We refer to H\"ormander \cite{hormander} and Lebeau and Robbiano \cite{lebeau, lebeau-robbiano1, lebeau-robbiano2} for the second-order elliptic and hyperbolic PDE's and to Isakov \cite{isakov} second-order parabolic and Schr\"odinger operators. However, it turned out that Carleman estimates for systems in more than two variables are difficult to obtain and still somehow very limited: The first results to systems go back to then Carleman's original work \cite{carleman} which is written for a system in two independent variables, and we refer to Calder\'on \cite{calderon} and Kreiss \cite{kreiss} for more relevant systems. Recently, Eller and Toundykov \cite{eller-toundykov} have established a Carleman estimate for some first-order elliptic systems. This estimate is extended to elliptic boundary value problems provided the boundary condition satisfies a Lopatinskii-type requirement. In this paper we provide a Carleman estimate for a system of first-order which does not fit into the same framework as that of Eller and Toundykov \cite{eller-toundykov}. Unlike their approach, our method is based into the H\"ormander approach which is essentially based on the sub-ellipticity condition and the G{\aa}rding inequality in order to control the non-elliptical regions.

The paper is organized as follows. Section \ref{section2} summarizes some well known facts concerning the acoustic system \rfb{fluide}. In section \ref{section3}, we establish a new Carleman estimate needed for the stabilization problem of the system (\ref{fluide}).  
In Section \ref{section4}, we prove the logarithmic stability for the system \rfb{fluide}.

\section{Preliminaries}
\label{section2}

We start with a simple observation that the problem (\ref{fluide}) can be viewed as a bounded (in $H$) perturbation of the conservative system 
\be
\label{fluidec}
\left\{
\begin{array}{l}
u_t + \nabla r  = 0, \,\hbox{ in } \Omega \times \rline^+, \\
r_t + \div u = 0, \,\hbox{ in } \Omega \times \rline^+, 
\end{array}
\right.
\ee
which can be recast as the standard \emph{wave equation}
\[
r_{tt} - \Delta r = 0.
\]
Consequently, the basic existence theory for (\ref{fluide}) derives from that of (\ref{fluidec}).
Hence ${\mathcal A}_d$ generates a $C_0$-semigroup $(S(t))_{t\geq 0}$ in $H$ that is even of contraction because ${\mathcal A}_d$ is dissipative (see \rfb{energyid}).

The first main difficulty is that
the operator ${\mathcal A}_d$ possesses a non-trivial (and large) kernel that is left invariant by the evolution.
Indeed if  $(u,r)$ belongs to $\ker {\mathcal A}_d$, then it is solution of the ``stationary'' problem 
\begin{equation}\label{st1}
\nabla r + b u = 0, \ \div u = 0, \,\hbox{ in } \Omega.
\end{equation} 
Thus  multiplying the first identity of  (\ref{st1}) by $\overline{u}$ and integrating over $\Omega$ yields 
\[
\int_\Omega (\nabla r \cdot \overline{u}+ b |u|^2)\,\ud x=0.
\]
By an integration by parts, using the fact that $u$ is solenoidal and the 
boundary condition $u \cdot n= 0$ on  $\Gamma$, we get
\[
\int_\Omega \nabla r \cdot \overline{u}\,\ud x=0,\]
and therefore we obtain 
\[
\int_\Omega  b |u|^2\,\ud x=0.
\]
In other words, we have
\[
 u = 0 \ \mbox{on}\ \supp b ,
\]
and coming back to (\ref{st1}), we find
\[
\nabla r = 0.
\]
Accordingly, we have shown that
\[
\ker {\mathcal A}_d = \{(u, 0)\in {\mathcal D}({\mathcal A}) \ | \ \div u = 0, \ u|_{ \supp b} = 0, 
\ u \cdot n|_{\Gamma} = 0 \}.
\]
For shortness set $E=\ker {\mathcal A}_d$ and
introduce also   its orthogonal complement $H_0$ in $H$. 

It is easy to check that 
\[
\left< {\mathcal A}_d ( w, s ), ( u, r) \right>_H = 0 \ \mbox{for any}\ 
(w, s) \in {\mathcal D}({\mathcal A}), \ (u,r) \in E;
\] 
in particular, the semi-group  associated with (\ref{fluide}) leaves both $E$ and $H_0$ invariant. 
Consequently, the decay property (\ref{stab}) may only hold  for   initial data emenating from the set $H_0$.

\medskip

The following observation can be shown by a simple density argument:

\begin{lem}
The solution $(u,r)$ of \rfb{fluide} with
initial datum in ${\mathcal D}(\mathcal{A}_d)$ satisfies
\be
\label{deriveeenergy}
E'(t) = - \int_\Omega b\left|u \right|^2dx\leq0.
\ee
Therefore the energy is non-increasing and \rfb{energyid} holds for all initial datum in $H$.
\end{lem}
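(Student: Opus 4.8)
The plan is to differentiate the energy along a classical trajectory, use that $\mathcal A$ is skew-adjoint on $\mathcal D(\mathcal A)$ once the boundary condition is taken into account, and then pass to arbitrary initial data by density. Since $\mathcal D(\mathcal A_d)=\mathcal D(\mathcal A)$, for $Z^0\in\mathcal D(\mathcal A_d)$ the solution $Z=(u,r)$ of \eqref{cauchybis} lies in $C^1([0,\infty);H)\cap C([0,\infty);\mathcal D(\mathcal A))$ (recalled in the introduction), so $t\mapsto\|Z(t)\|_H^2$ is $C^1$ and, using $Z_t=\mathcal A_d Z=-\mathcal A Z-\mathcal B\mathcal B^*Z$,
\[
E'(t)=\re\langle Z_t(t),Z(t)\rangle_H=-\re\langle \mathcal A Z(t),Z(t)\rangle_H-\|\mathcal B^*Z(t)\|^2 .
\]

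The core point is that $\re\langle\mathcal A Z,Z\rangle_H=0$ for every $Z=(u,r)\in\mathcal D(\mathcal A)$. Indeed $\langle\mathcal A Z,Z\rangle_H=\int_\Omega\bigl(\nabla r\cdot\overline u+\div u\,\overline r\bigr)\,\ud x$; since $r,\nabla r\in L^2(\Omega)$ we have $r\in H^1(\Omega)$, while $u\in(L^2(\Omega))^d$ with $\div u\in L^2(\Omega)$, so the Green formula of Girault--Raviart \cite[Chp 1, Theorem 2.5]{Girault-Raviart} gives $\int_\Omega\div u\,\overline r\,\ud x=\langle u\cdot n,\overline r\rangle_{H^{-1/2}(\Gamma),H^{1/2}(\Gamma)}-\int_\Omega u\cdot\nabla\overline r\,\ud x$, where the boundary pairing vanishes because $u\cdot n|_\Gamma=0$. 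Hence $\langle\mathcal A Z,Z\rangle_H=\int_\Omega\nabla r\cdot\overline u\,\ud x-\overline{\int_\Omega\nabla r\cdot\overline u\,\ud x}$ is purely imaginary. Together with $\|\mathcal B^*Z\|^2=\|\sqrt b\,u\|^2=\int_\Omega b|u|^2\,\ud x$ this yields $E'(t)=-\int_\Omega b|u|^2\,\ud x\le 0$, i.e.\ \eqref{deriveeenergy}.

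Integrating \eqref{deriveeenergy} on $[0,t]$ gives the identity \eqref{energyid} for every $Z^0\in\mathcal D(\mathcal A_d)$. To extend it to arbitrary $Z^0\in H$, I would take $Z^0_k\in\mathcal D(\mathcal A_d)$ with $Z^0_k\to Z^0$ in $H$; since $(S(t))_{t\ge 0}$ is a contraction semigroup, the corresponding solutions $(u_k,r_k)$ converge to $(u,r)$ in $C([0,T];H)$ for every $T$. The left-hand side of \eqref{energyid} is then clearly continuous in $k$, and so is the right-hand side, because $\|\sqrt b\,u_k(s)\|\le\|b\|_{L^\infty(\Omega)}^{1/2}\|u_k(s)\|$ is bounded uniformly in $k$ and $s\in[0,t]$ while $u_k(s)\to u(s)$ in $L^2$; dominated convergence then passes the limit inside the time integral, yielding \eqref{energyid} for $Z^0\in H$. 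The only step needing care is the integration by parts, where $u\cdot n$ a priori only belongs to $H^{-1/2}(\Gamma)$, which is precisely why one invokes the trace/Green formula of Girault--Raviart rather than the classical divergence theorem; everything else is the routine "differentiate the norm along a classical solution, then use density" argument.
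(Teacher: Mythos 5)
Your argument is correct and follows exactly the route the paper has in mind: differentiate $E(t)=\frac12\|Z(t)\|_H^2$ for $Z^0\in\mathcal D(\mathcal A_d)$, use the Green formula of Girault--Raviart together with $u\cdot n|_\Gamma=0$ to see that $\langle\mathcal A Z,Z\rangle_H$ has zero real part, and then extend \eqref{energyid} to $H$ by the contraction property and density, which is precisely the ``simple density argument'' the paper invokes without detailing. No gaps.
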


As already shown in the above, the strong stability result (\ref{stab}) may hold only if 
we take the initial data
\[
(u^0, r^0) \in H_0 = \ker [{\mathcal A}_d]^\perp. 
\]
There are several ways how to show (\ref{stab}), here
we make use of the following result
due to Arendt and Batty \cite{arendt:88}:

\begin{thm}\label{thmArendtBatty}
Let $(T(t))_{t\geq0}$ be a bounded $C_0$-semigroup on a reflexive Banach space $X$. Denote by $A$ the generator of $(T(t))$ and by $\sigma(A)$ the spectrum of $A$. If $\sigma(A)\cap i\mathbb{R}$ is countable and no eigenvalue of $A$ lies on the imaginary axis, then $\ds \lim_{t\rightarrow+\infty} T(t)x = 0$ for
all $x\in X$.
\end{thm}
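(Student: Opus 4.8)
The plan is to obtain this as the classical stability criterion of Arendt and Batty, via a Laplace-transform and Tauberian argument. Write $M=\sup_{t\ge0}\|T(t)\|<\infty$, fix $x\in D(A)$, and set $f(t)=T(t)x$; this is a bounded, Lipschitz $X$-valued function (Lipschitz because $f'(t)=T(t)Ax$ is bounded), so by density of $D(A)$ and uniform boundedness of $T(\cdot)$ it suffices to prove $f(t)\to0$ for such $x$. Its Laplace transform $\widehat f(\lambda)=\int_0^\infty e^{-\lambda t}T(t)x\,dt=R(\lambda,A)x$ is holomorphic on $\{\Re\lambda>0\}$ with $\|\widehat f(\lambda)\|\le M\|x\|/\Re\lambda$, and I would prove that $\int_0^{T}f(t)\,dt$ converges as $T\to\infty$; since $f$ is then a bounded, uniformly continuous function with convergent integral, Barbalat's lemma forces $f(t)\to0$, which is what we want.

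Convergence of the orbit integral I would deduce from a Tauberian theorem for vector-valued Laplace transforms that tolerates a countable singular set on the imaginary axis, whose two hypotheses must be checked. First, boundedness of the semigroup puts $\sigma(A)$ in $\{\Re\lambda\le0\}$, and since $E:=\sigma(A)\cap i\R$ is countable and closed, $\widehat f=R(\cdot,A)x$ extends holomorphically across $i\R\setminus E$, hence is holomorphic on an open set containing $\{\Re\lambda\ge0\}\setminus E$. Second — and here reflexivity and the eigenvalue hypothesis enter — at each $i\eta\in E$ the mean ergodic theorem for a bounded $C_0$-semigroup on a reflexive space, applied to the rescaled semigroup $e^{-i\eta t}T(t)$ with generator $A-i\eta$, gives the splitting $X=\ker(A-i\eta)\oplus\overline{\mathrm{ran}(A-i\eta)}$ together with the Abel limit $\lim_{\varepsilon\downarrow0}\varepsilon\,R(\varepsilon+i\eta,A)x=P_\eta x$, where $P_\eta$ projects onto $\ker(A-i\eta)$; since $i\eta$ is not an eigenvalue of $A$, $\ker(A-i\eta)=\{0\}$, so $P_\eta=0$ and $(\lambda-i\eta)\widehat f(\lambda)\to0$ as $\lambda\to i\eta$ non-tangentially. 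These being exactly the hypotheses of the Tauberian theorem, it yields $\int_0^{T}f(t)\,dt\to\widehat f(0)$, and the proof is complete.

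An equivalent, more operator-theoretic route keeps the spectrum in the foreground: introduce the closed $T(t)$-invariant subspace $X_0=\{x:\ \|T(t)x\|\to0\}$, pass to the induced bounded quotient semigroup on the reflexive space $X/X_0$, and argue by contradiction that if $X/X_0\ne\{0\}$ then the imaginary spectrum of its generator $\tilde A$ is a nonempty closed countable set, hence, by the Baire category theorem, has an isolated point $i\eta$. The Riesz projection at $i\eta$ then carves out a nonzero invariant subspace on which the semigroup is a bounded group $e^{i\eta t}e^{tN}$ with $N$ bounded and quasi-nilpotent, and a short resolvent-series estimate forces $N=0$; thus $i\eta$ is an eigenvalue of $\tilde A$, which, traced back through the mean ergodic theorem, contradicts the eigenvalue hypothesis.

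The step I expect to be the genuine obstacle is the Tauberian input with a countable exceptional set: the clean Ingham--Karamata theorem requires $\widehat f$ to extend continuously to the whole line, and admitting countably many singularities is precisely what forces the boundary estimate $(\lambda-i\eta)\widehat f(\lambda)\to0$, whose verification is where reflexivity — through the mean ergodic theorem — becomes indispensable (on a non-reflexive $X$ the result genuinely fails under the hypothesis on $A$ alone, and one must instead assume that $A^\ast$ has no eigenvalue on the imaginary axis). Reconciling the point spectra of $A$, of $A^\ast$, and of the quotient generator is the secondary delicate point.
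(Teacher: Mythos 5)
The paper offers no proof of this statement: Theorem~\ref{thmArendtBatty} is quoted as a known result of Arendt and Batty \cite{arendt:88}, so there is nothing internal to compare your argument with, and I assess your sketch on its own. Your first route has a concrete flaw at its central step. You propose to show that $\int_0^T T(t)x\,dt$ converges for $x\in D(A)$ and then conclude by Barbalat's lemma; but the hypotheses allow $0\in\sigma(A)\cap i\mathbb{R}$, and then the integral need not converge. Take $X=\ell^2$, $A=\operatorname{diag}(-1/n)$: the semigroup is a contraction semigroup, $\sigma(A)\cap i\mathbb{R}=\{0\}$ with no imaginary eigenvalue, and the conclusion of the theorem holds, yet for $x=(1/n)_n\in D(A)$ the $n$-th component of $\int_0^T T(t)x\,dt$ is $1-e^{-T/n}$, whose $\ell^2$-norm blows up as $T\to\infty$. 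Note that your boundary condition $\varepsilon R(\varepsilon,A)x\to 0$ does hold in this example, so a Tauberian theorem concluding $\int_0^T f\to\widehat f(0)$ under exactly the hypotheses you verify is false: the Arendt--Batty integral Tauberian theorem requires holomorphic extension through $0$ and, at the exceptional points $i\eta\neq 0$, boundedness of $\int_0^t e^{-i\eta s}f(s)\,ds$, not merely vanishing Abel means. The variant that tolerates a countable exceptional set and concludes $f(t)\to 0$ directly (the countable-spectrum, Loomis-type theorem) is essentially equivalent to the statement you are proving, so invoking it without proof would be circular.

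Your second route also has two genuine gaps as written. First, Baire gives a point $i\eta$ isolated in the closed countable set $\sigma(\tilde A)\cap i\mathbb{R}$, but the Riesz projection requires $i\eta$ to be isolated in all of $\sigma(\tilde A)$, and spectrum in the open left half-plane may accumulate at $i\eta$; this is precisely why Lyubich--V\~u do not work on $X/X_0$ itself but on the completion of $X/X_0$ for the seminorm $\limsup_t\|T(t)x\|$, where the induced semigroup is isometric, extends to an isometric group, and has generator with spectrum contained in $i\mathbb{R}$ (this also settles the non-emptiness of the boundary spectrum, which in your version needs a separate Ingham-type argument). Second, even granted a Riesz subspace, forward boundedness does not force the quasinilpotent part to vanish: with $V$ the Volterra operator on $L^2(0,1)$, $e^{-tV}$ is a contraction semigroup although $-V$ is quasinilpotent and nonzero; Gelfand's $T=I$ argument needs a two-sided bounded (isometric) group, again supplied only by the limit-isometry construction. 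What is correct and well placed in your sketch is the role of reflexivity: the mean ergodic theorem converts ``no imaginary eigenvalue of $A$'' into ``no imaginary eigenvalue of $A^*$'', which is the hypothesis actually needed in the general (non-reflexive) statement, as your final remark observes.
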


In view of this theorem we need to identify the spectrum of
${\mathcal A}_d$ lying on the imaginary axis, and we have according to \cite{AFN}:

\begin{itemize}
\item
Suppose that $|\omega| > 0$.
If $\lambda$ is a non-zero real number, then $i\lambda$  is not an eigenvalue of ${\mathcal A}_d$.
\item
Suppose that $|\omega| > 0$.
If $\lambda$ is a non-zero real number, then $i\lambda$ belongs to the resolvent set $\rho({\mathcal A}_d)$ of ${\mathcal A}_d$.
\end{itemize}

Now, Theorem \ref{thmArendtBatty} leads to
\begin{cor}[\cite{AFN}]
\label{cconv}
Let $(u, r)$ be the unique semi-group solution of the problem (\ref{fluide}) emanating from the initial data 
$(u^0,r^0) \in H$. Let $P_E$ be the orthogonal projection onto the space $E = \ker [{\mathcal A}_d ]$ in $H$, and let 
\[
(w,s) = P_E (u^0, r^0).
\] 
Then
\[ 
\| (u,r)(t, \cdot) - (w, s) \|_{H} \to 0 \ \mbox{as}\ t \to \infty
\]
\end{cor}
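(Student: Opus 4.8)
The plan is to peel off the stationary part of the datum and then apply Theorem~\ref{thmArendtBatty} on the invariant subspace $H_0$. Write $Z^0 = (u^0,r^0) = (w,s) + Z^0_0$, where $(w,s) = P_E Z^0 \in E = \ker {\mathcal A}_d$ and $Z^0_0 = (I - P_E)Z^0 \in H_0$. Since $(w,s) \in \ker {\mathcal A}_d$, the constant function $t \mapsto (w,s)$ solves \eqref{cauchybis} with that initial value, so by uniqueness $S(t)(w,s) = (w,s)$ for all $t \ge 0$. By linearity of the semigroup, $(u,r)(t) = S(t)Z^0 = (w,s) + S(t)Z^0_0$, hence the assertion is equivalent to $\|S(t)Z^0_0\|_H \to 0$ as $t \to \infty$.

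As recalled in Section~\ref{section2}, the subspace $H_0$ is invariant under $(S(t))_{t\ge 0}$, so $T(t) := S(t)|_{H_0}$ defines a $C_0$-semigroup of contractions on the Hilbert space $H_0$, whose generator $A_0$ is the part of ${\mathcal A}_d$ in $H_0$; moreover, since the range of ${\mathcal A}_d$ is orthogonal to $E$ (shown in Section~\ref{section2}), one has $\mathcal{D}(A_0) = \mathcal{D}({\mathcal A}_d) \cap H_0$. Being a closed subspace of a Hilbert space, $H_0$ is reflexive, so Theorem~\ref{thmArendtBatty} yields $T(t)x \to 0$ for every $x \in H_0$ provided $\sigma(A_0) \cap i\mathbb{R}$ is countable and contains no eigenvalue of $A_0$. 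For the first point, the facts quoted from \cite{AFN} say that $i\lambda \in \rho({\mathcal A}_d)$ for every real $\lambda \ne 0$; since $H = E \oplus H_0$ with both summands invariant and ${\mathcal A}_d \equiv 0$ on $E$, one has ${\mathcal A}_d = 0 \oplus A_0$, so $\mu \in \rho({\mathcal A}_d)$ implies $\mu \in \rho(A_0)$ for $\mu \ne 0$, whence $\sigma(A_0) \cap i\mathbb{R} \subseteq \{0\}$, which is countable. For the second point, if $A_0 x = 0$ with $x \in \mathcal{D}(A_0)$, then $x \in \ker {\mathcal A}_d = E$ and $x \in H_0 = E^\perp$, so $x = 0$; thus $0$ is not an eigenvalue of $A_0$. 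Applying Theorem~\ref{thmArendtBatty} to $x = Z^0_0$ and combining with the first paragraph completes the proof.

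The only step that is not entirely formal is the identification of the generator of the restricted semigroup $(T(t))$ with the part of ${\mathcal A}_d$ in $H_0$, together with the transfer of the resolvent information from ${\mathcal A}_d$ to $A_0$; both are routine once one knows that $E$ and $H_0$ reduce ${\mathcal A}_d$, which is already in place from Section~\ref{section2}. In effect, all of the analytic content has been absorbed into the two spectral facts borrowed from \cite{AFN} (no nonzero imaginary eigenvalue, no nonzero imaginary spectrum), so beyond this bookkeeping I do not anticipate a genuine obstacle here.
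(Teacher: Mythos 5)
Your proposal is correct and follows essentially the same route as the paper: split off the stationary part $P_E(u^0,r^0)$, restrict the contraction semigroup to the invariant subspace $H_0$, and apply the Arendt--Batty theorem using the spectral facts quoted from \cite{AFN} (nonzero imaginary points lie in the resolvent set, no imaginary eigenvalues). The paper leaves the reduction bookkeeping (that $E$ and $H_0$ reduce ${\mathcal A}_d$ and that the resolvent information transfers to the part $A_0$ in $H_0$) implicit, and your proposal merely spells it out.
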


We now state the main result of this article. We begin by a proposition on  an estimate of the resolvent.
 %
 %
 \begin{prop}
 	\label{prop: resolv exp}
There exist $C>0$ such that for every  $|\mu|\ge 1$, and  $(f,g)\in H=  (L^2(\Omega))^d \times L^2_m(\Omega)$, 
the solution $(u,r)\in {\mathcal D}({\mathcal A})$ of $({\mathcal A}_d +i\mu )(u,r)=(f,g)$  satisfied   
\begin{equation}
\| (u,r) \|_H\le C e^{C|\mu|}\| (f,g)  \|_H,
\end{equation}
or equivalently
\begin{equation}
\| ({\mathcal A}_d +i\mu )^{-1}\|_{{\mathcal L}(H)}\le C e^{C|\mu|}.
\end{equation}
 \end{prop}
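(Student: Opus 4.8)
The plan is to reduce the resolvent estimate to a Carleman estimate for the stationary system. Given $(f,g)\in H$ and the solution $(u,r)\in\mathcal{D}(\mathcal{A})$ of $(\mathcal{A}_d+i\mu)(u,r)=(f,g)$, writing out the components gives
\[
i\mu u-\nabla r-b\,u=f,\qquad i\mu r-\div u=g,
\]
an elliptic-type first-order system in the variables $(u,r)$ with a spectral parameter $\mu$. The idea is to treat $x\mapsto(u(x),r(x))$ as a function on $\Omega$ (augmenting $\Omega$ to an open set or using a trick to absorb the $i\mu$ factors as in the standard Lebeau--Robbiano scheme), apply the new Carleman estimate of Section~\ref{section3} with a suitable weight $\varphi$ whose critical points are controlled and which is adapted to the geometry (no geometric control condition is assumed, so the weight is chosen freely, yielding only a logarithmic/exponential loss), and thereby bound $\|(u,r)\|$ on a subdomain in terms of the right-hand side $(f,g)$ and boundary/interface contributions. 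The weight being non-trivial forces the large constant $Ce^{C|\mu|}$: one optimizes the free parameter in the Carleman inequality (the large parameter $\tau$ or $h$) against $|\mu|$, and the mismatch between the exponential gain on the good region and the exponential loss needed to cross the bad region produces the factor $e^{C|\mu|}$.

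The key steps, in order, would be: \textbf{(1)} Derive the stationary system above and establish an a priori energy identity: multiply the first equation by $\overline u$, the second by $\overline r$, integrate over $\Omega$, integrate by parts using $u\cdot n|_\Gamma=0$, and take real parts to get the basic bound $\|\sqrt b\,u\|^2\le \|(f,g)\|\,\|(u,r)\|$, i.e. control of $u$ on $\omega$ (where $b\ge b_->0$) by the right-hand side and $\|(u,r)\|$ itself. \textbf{(2)} Use the Carleman estimate of Section~\ref{section3} applied to the operator governing $(u,r)$, with a weight function $\varphi$ that is a pseudoconvex/sub-elliptic phase for this first-order system and whose ``observation region'' sits inside $\omega$; this requires checking the sub-ellipticity (Hörmander) condition and using the Gårding inequality to handle the non-elliptic characteristic set, exactly as advertised in the introduction. \textbf{(3)} Absorb the lower-order term $b\,u$ (bounded, supported in $\supp b$) into the Carleman estimate for large parameter, and absorb the boundary terms using $u\cdot n|_\Gamma=0$ (together with the auxiliary $H^{-1/2}(\Gamma)$ trace theory cited from Girault--Raviart, and possibly a boundary Carleman term or a cutoff near $\Gamma$). \textbf{(4)} Combine (1)--(3) to get an interpolation-type inequality of the form $\|(u,r)\|_{L^2(\Omega')}\le e^{C\tau}\|(f,g)\|+e^{-C\tau}\|(u,r)\|$ on a slightly smaller domain, then choose $\tau\sim |\mu|$ to balance, and finally propagate the estimate from $\Omega'$ to all of $\Omega$ by a standard chain/covering argument (Lebeau--Robbiano type), yielding $\|(u,r)\|_H\le Ce^{C|\mu|}\|(f,g)\|_H$.

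The main obstacle I expect is Step~(2): producing and correctly invoking a Carleman estimate for this \emph{first-order system} rather than a scalar second-order operator. The acoustic system's symbol has a nontrivial characteristic variety (it is not elliptic — the wave operator's characteristics persist), so the naive Hörmander scheme for scalar elliptic operators does not apply directly; one must either diagonalize/reduce the system (the system is essentially $\partial_t$ plus a symmetric hyperbolic part, so squaring or passing to $r_{tt}-\Delta r$ is tempting but loses the boundary condition structure on $u$), or work with the system directly, verifying a matrix sub-ellipticity condition and using Gårding on the non-elliptic cone, as the authors indicate. Getting the boundary condition $u\cdot n=0$ to cooperate with the Carleman weight near $\Gamma$ — rather than satisfying a clean Lopatinskii condition as in Eller--Toundykov — is the delicate technical point, and it is presumably why a \emph{new} Carleman estimate is needed and stated separately in Section~\ref{section3}. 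A secondary difficulty is making the dependence on $\mu$ fully explicit: one must track that every constant arising from the Carleman inequality and the covering argument is at worst $Ce^{C\tau}$ with $\tau$ chosen proportional to $|\mu|$, so that no hidden super-exponential factors appear.
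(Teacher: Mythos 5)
Your overall route is the paper's route: combine the Section~\ref{section3} Carleman estimates (interior ones away from the boundary, plus the boundary estimate of Theorem~\ref{th: Carleman bord}, glued by a partition of unity with a Fursikov--Imanuvilov type weight $\varphi=e^{\lambda\psi}$ having no critical points on $\overline\Omega\setminus\omega_1$, $\omega_1\Subset\omega$) with the dissipation identity obtained by pairing the resolvent equations with $(u,r)$, then take $\tau$ of size $|\mu|$. Two small corrections of emphasis: the choice $\tau\sim|\mu|$ is not an optimization but is \emph{forced}, since all the Carleman estimates of Section~\ref{section3} are proved only under the comparability hypothesis \eqref{hyp: mu eq tau}, and the factor $e^{C|\mu|}$ is simply $e^{(\max\varphi-\min\varphi)\tau}$; moreover the inequality one closes is not of the form $e^{C\tau}\|(f,g)\|+e^{-C\tau}\|(u,r)\|$ but $\|(u,r)\|_H\lesssim e^{K|\mu|}\big(\|(f,g)\|_H+\|(u,r)\|_H^{1/2}\|(f,g)\|_H^{1/2}\big)$, absorbed by Young's inequality.

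The genuine gap is the treatment of the observation terms. Because the weight must have critical points somewhere inside $\Omega$ (they are placed in $\omega_1\subset\omega_0$), the global Carleman estimate (Theorem~\ref{th: global carleman}) unavoidably carries observation terms in \emph{both} components, $\tau^{3/2}\|e^{\tau\varphi}u\|_{L^2(\omega_0)}$ \emph{and} $\tau^{3/2}\|e^{\tau\varphi}r\|_{L^2(\omega_0)}$. Your step (1) only controls $\|\sqrt b\,u\|^2\le\|(u,r)\|_H\|(f,g)\|_H$, i.e.\ $u$ on $\omega$; since the damping does not act on $r$, nothing in your plan bounds $\|r\|_{L^2(\omega_0)}$, and without that bound the argument does not close. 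The paper supplies this with the second estimate of Proposition~\ref{prop: dissipation est.}: pair $-\div u+i\mu r=g$ with $\chi^2 r$, where $\chi$ is a cutoff equal to $1$ on $\omega_0$ and supported in $\omega$, integrate by parts, substitute $\nabla r=i\mu u-bu-f$ from the first equation, and use $b\ge b_-$ on $\supp\chi$ together with the already proven bound on $\|\sqrt b\,u\|$ to get $\|\chi r\|^2\lesssim\|(u,r)\|_H\|(f,g)\|_H$ (the factor $|\mu|\ge1$ only improves this). You would need this, or an equivalent argument, as an explicit additional step; it is the one place where the structure of the system (recovering $r$ near $\omega$ from $u$ and the equations) is genuinely used, and it cannot be bypassed by choosing a ``better'' weight, since $\partial_n\varphi<0$ on $\Gamma$ and the absence of critical points cannot hold simultaneously on all of $\overline\Omega$.
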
  
We recall the following result.
 %
 %
\begin{thm}
	\label{thm: resolvent decay}
Let $B$ a generator of a $C_0$-semigroup $(T(t))_{t\geq 0}$ on $\mathcal H$, a Hilbert space.
We assume
\begin{align}
& \|T(t)\|_{{\mathcal L}(H)} \text{ is uniformly bounded with respect } t\ge0,\\
& B+i\mu \text{ is invertible for every } \mu\in \R,\\
&\text{There exists } C>0 \text{ such that } \| (B +i\mu )^{-1}\|_{{\mathcal L}(H)}\le C e^{C|\mu|}.
\end{align}
Then there exist $C_1>0$ such that for all $u\in { \mathcal D} (B)$ we have 
\[
\| T(t) u \|_{\mathcal H} \le  C_1\frac{\| Bu\|_{\mathcal H}}{\ln(3+t)}, \, \forall \, t \geq 0.
\]
\end{thm}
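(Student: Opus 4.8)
This is a classical abstract statement: in the present exponential regime it is already contained in Lebeau--Robbiano \cite{lebeau-robbiano1,lebeau-robbiano2} and Burq \cite{burq}, and it is a special case of the quantitative Tauberian theorem of Batty--Duyckaerts \cite{batty,Duyckaerts}. I sketch the argument I would follow.

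The first step is to push the resolvent estimate off the imaginary axis. Since $(B+i\mu)^{-1}$ exists for every $\mu\in\R$ with $\|(B+i\mu)^{-1}\|_{\mathcal L(\mathcal H)}\le Ce^{C|\mu|}$, a Neumann series centred at $i\mu$ shows that $\lambda\mapsto(\lambda-B)^{-1}$ extends holomorphically to $\mathcal O:=\{\lambda\in\C:\ \Re\lambda>-\tfrac1{2C}e^{-C|\Im\lambda|}\}$, with $\|(\lambda-B)^{-1}\|_{\mathcal L(\mathcal H)}\le 2Ce^{C|\Im\lambda|}$ there; on $\{\Re\lambda>0\}$ boundedness of $(T(t))$ gives in addition $\|(\lambda-B)^{-1}\|\le M/\Re\lambda$ with $M:=\sup_{t\ge0}\|T(t)\|$. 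Taking $\mu=0$ we see that $0\in\rho(B)$, so $B^{-1}\in\mathcal L(\mathcal H)$; writing $w=Bu$, the asserted bound is then equivalent to $\|T(t)B^{-1}\|_{\mathcal L(\mathcal H)}\le C_1/\ln(3+t)$, and it suffices to estimate $f(t):=T(t)B^{-1}w$ for arbitrary $w\in\mathcal H$.

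The key point I would use is that $f$ is not only bounded, $\|f(t)\|\le M\|B^{-1}\|\,\|w\|$, but Lipschitz: since $B^{-1}w\in\mathcal D(B)$ we have $f\in C^1$ with $f'(t)=T(t)w$, hence $\|f'(t)\|\le M\|w\|$. Its Laplace transform is $\widehat f(\lambda)=(\lambda-B)^{-1}B^{-1}w$, which extends to $\mathcal O$ with boundary values $\widetilde f(\xi)=(i\xi-B)^{-1}B^{-1}w$, $\|\widetilde f(\xi)\|\le 2Ce^{C|\xi|}\|B^{-1}\|\,\|w\|$. Now I would run the Korevaar--Newman type contour argument underlying \cite{batty,Duyckaerts}: fix a scale $R\ge1$ and regularise $f$ by convolution with a kernel $g_R$ whose Fourier transform is a fixed smooth bump dilated to width $R$; then $f-f*g_R=\int\!\big(f(\cdot)-f(\cdot-s)\big)g_R(s)\,ds$ has norm $\le C_\chi M\|w\|/R$ by the Lipschitz estimate. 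The main term $f*g_R$ is, via $\widetilde f$, an integral over the \emph{bounded} arc $\{|\Im\lambda|\le R\}$ of the imaginary axis of $e^{\lambda t}$ against a smoothly truncated $\widehat f$; replacing the (non-holomorphic) dilated bump by an entire Paley--Wiener cutoff of the same width, this finite integral can be shifted into $\mathcal O$, down to depth $\delta:=\tfrac1{2C}e^{-CR}$, which gains $|e^{\lambda t}|\le e^{-\delta t}$ and produces a contribution $\lesssim e^{CR}e^{-\delta t}\|w\|$ from the resolvent bound on the shifted arc (the remaining short pieces of the deformed contour contribute comparable or smaller terms). Altogether $\|f(t)\|\lesssim\big(e^{CR}e^{-\delta t}+R^{-1}\big)\|w\|$, and choosing $R=\tfrac1{2C}\ln t$ for large $t$ makes the first term super-polynomially small while the second is $\lesssim\|w\|/\ln t$; adjusting the constant on a bounded time interval gives $\|T(t)B^{-1}\|\le C_1/\ln(3+t)$. (Equivalently one may simply quote \cite{batty,Duyckaerts}: with $M(R):=\sup_{|\mu|\le R}\|(B+i\mu)^{-1}\|\le Ce^{CR}$ one obtains $\|T(t)B^{-1}\|=O\big(1/M_{\log}^{-1}(t)\big)$ where $M_{\log}(R):=M(R)\big(\ln(1+M(R))+\ln(1+R)\big)\asymp Re^{CR}$, whence $M_{\log}^{-1}(t)\asymp\tfrac1C\ln t$.)

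I expect the contour step to be the only real difficulty. A shortcut through it --- proving the logarithmic bound only for data in $\mathcal D(B)$, where $\widehat f$ decays like $|\lambda|^{-2}$, and then interpolating against $\|T(t)\|\le M$ --- loses a square root and yields only $1/\sqrt{\ln t}$; obtaining the sharp rate forces one to smooth $f$ itself rather than $\widehat f$, which is exactly why the Lipschitz bound above is indispensable and why one needs a cutoff that is at once holomorphic in a strip (to permit the shift into $\mathcal O$) and rapidly decaying in $|\Im\lambda|$ (to keep the truncated integral finite, so that the exponentially large resolvent is never integrated against a non-decaying weight). The other ingredients --- the Neumann-series extension, the identification of boundary values, and the multiplier identity for the merely bounded function $f$ --- are routine, and the full bookkeeping is carried out in the references cited above.
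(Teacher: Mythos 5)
Your proposal is correct and follows the same route the paper relies on: the paper does not prove this theorem itself but quotes it from Lebeau, Burq and Batty--Duyckaerts \cite{lebeau,burq,batty}, and your sketch (Neumann-series extension of the resolvent to a strip of width $\sim e^{-C|\Im\lambda|}$ left of the imaginary axis, reduction to $\|T(t)B^{-1}\|$ with the Lipschitz bound $f'(t)=T(t)w$, smoothing at scale $R$ and a contour shift to depth $\delta\sim e^{-CR}$ followed by the choice $R\sim\ln t$) is precisely the argument underlying those references, with the bookkeeping coming out right. No gap to report.
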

One has also, for every $k\ge 1$ there exists $ C_2>0$ such that if $u\in { \mathcal D} (B^k)$,  we have 
\[
\| T(t) u \|_{\mathcal H} \le  C_1\frac{\| B^ku\|_H}{\ln^k(3+t)}, \, \forall \, t \geq 0.
\]
A weak version of this theorem was first proven by Lebeau~\cite{lebeau}, next Burq~\cite{burq} gives the precise statement. 
We also refer to Batty and Duyckaerts \cite{batty} for some generalizations.

On $ H_0= \ker [{\mathcal A}_d]^\perp$, as seen above ${\mathcal A}_d +i\mu $ is invertible on $H_0$, in fact
${\mathcal A}_d$ is invertible on $H_0$ and ${\mathcal A}_d +i\mu $ is invertible on $H$ for $\mu\ne 0$. 
The semigroup is bounded as the norm on $H$ is non-increasing by~\eqref{energyid}.
With  Proposition~\ref{prop: resolv exp}, we can apply Theorem~\ref{thm: resolvent decay}. We then obtain.
 %
 %
\begin{thm}
Let $(u, r)$ be the unique semi-group solution of the problem (\ref{fluide}) emanating from the initial data 
$(u^0,r^0) \in {\mathcal D}({\mathcal A})$. Let $P_E$ be the orthogonal projection onto the space 
$E = \ker [{\mathcal A}_d ]$ in $H$, and let 
\[
(w,s) = P_E (u^0, r^0).
\] 
Then
\[ 
\| (u,r)(t, \cdot) - (w, s) \|_{H}\le   C \frac{\| {\mathcal A}_d (u^0, r^0) \|_{H}}{\ln(3+t)}, \, \forall \, t \geq 0,
\]
for some $C>0$ independent of $ (u^0, r^0)$.
\end{thm}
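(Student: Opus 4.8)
The plan is to deduce the stated logarithmic rate from Theorem~\ref{thm: resolvent decay}, applied not on $H$ but on the invariant subspace $H_0=\ker[\mathcal{A}_d]^\perp$, after peeling off the time-invariant component of the solution living in the kernel $E=\ker\mathcal{A}_d$. As recalled in Section~\ref{section2}, the semigroup $(S(t))_{t\ge0}$ generated by $\mathcal{A}_d$ leaves both $E$ and $H_0$ invariant, and since every element of $E=\ker\mathcal{A}_d$ is a stationary solution of \eqref{cauchybis}, $S(t)$ acts as the identity on $E$. Writing $Z^0=(u^0,r^0)\in\mathcal D(\mathcal A)$ as $Z^0=P_EZ^0+P_{H_0}Z^0=(w,s)+P_{H_0}Z^0$, we get $S(t)Z^0=(w,s)+T(t)\,P_{H_0}Z^0$ with $T(t):=S(t)|_{H_0}$, hence
\[
(u,r)(t,\cdot)-(w,s)=T(t)\,P_{H_0}Z^0,\qquad t\ge0.
\]
Moreover the identity $\langle\mathcal{A}_d(w',s'),(u',r')\rangle_H=0$ for $(w',s')\in\mathcal D(\mathcal A)$, $(u',r')\in E$ shows $\mathcal{A}_d\bigl(\mathcal D(\mathcal{A}_d)\bigr)\subset H_0$, while $\mathcal{A}_d|_E=0$; therefore $\mathcal{A}_dZ^0=\mathcal{A}_d\,P_{H_0}Z^0\in H_0$, so $P_{H_0}Z^0\in\mathcal D(B)$ and $\|B\,P_{H_0}Z^0\|_{H_0}=\|\mathcal{A}_dZ^0\|_H$, where $B:=\mathcal{A}_d|_{H_0}$ with $\mathcal D(B)=\mathcal D(\mathcal{A}_d)\cap H_0$.

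Next I would check the hypotheses of Theorem~\ref{thm: resolvent decay} for $B$ on $H_0$. The semigroup $(T(t))$ is a restriction of $(S(t))$, hence uniformly bounded (indeed contractive) by the energy identity \eqref{energyid}. As recalled after \cite{AFN} in Section~\ref{section2}, $\mathcal{A}_d$ is invertible on $H_0$ and $\mathcal{A}_d+i\mu$ is invertible on $H$ for every $\mu\ne0$; since $H_0$ is invariant, the corresponding inverses map $H_0$ into $H_0$, so $B+i\mu$ is invertible on $H_0$ for all $\mu\in\R$. It remains to obtain $\|(B+i\mu)^{-1}\|_{\mathcal L(H_0)}\le Ce^{C|\mu|}$ for all $\mu\in\R$. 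For $|\mu|\ge1$ this is immediate from Proposition~\ref{prop: resolv exp} by restriction: for $(f,g)\in H_0$ the solution of $(\mathcal{A}_d+i\mu)(u,r)=(f,g)$ lies in $\mathcal D(\mathcal A)\cap H_0$ and obeys $\|(u,r)\|_H\le Ce^{C|\mu|}\|(f,g)\|_H$. For $|\mu|\le1$ the map $\mu\mapsto(B+i\mu)^{-1}$ is norm-continuous on the compact interval $[-1,1]$ (the resolvent being analytic on $\rho(B)$), hence bounded there by a constant which is $\le Ce^{C|\mu|}$. Thus all three hypotheses hold.

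Theorem~\ref{thm: resolvent decay} then provides $C_1>0$ such that $\|T(t)v\|_{H_0}\le C_1\|Bv\|_{H_0}/\ln(3+t)$ for every $v\in\mathcal D(B)$ and every $t\ge0$. Taking $v=P_{H_0}Z^0$ and combining with the first paragraph,
\[
\|(u,r)(t,\cdot)-(w,s)\|_H=\|T(t)\,P_{H_0}Z^0\|_{H_0}\le C_1\frac{\|B\,P_{H_0}Z^0\|_{H_0}}{\ln(3+t)}=C_1\frac{\|\mathcal{A}_dZ^0\|_H}{\ln(3+t)},
\]
which is exactly the asserted estimate with $C=C_1$, manifestly independent of $Z^0$.

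At this level there is no genuine obstacle: the entire mathematical substance is packaged in Proposition~\ref{prop: resolv exp}, whose proof via the new Carleman estimate of Section~\ref{section3} is the technical heart of the paper, and in the abstract Theorem~\ref{thm: resolvent decay}. The only points needing care are the bookkeeping on the restricted space: verifying that $\mathcal{A}_d$ genuinely maps $\mathcal D(\mathcal{A}_d)$ into $H_0$ (so that $\|B\,P_{H_0}Z^0\|_{H_0}=\|\mathcal{A}_dZ^0\|_H$ holds exactly), that $B$ is invertible at $\mu=0$, and that the resolvent bound of Proposition~\ref{prop: resolv exp}, stated for $|\mu|\ge1$, extends to all real $\mu$ on $H_0$; each of these is routine given the facts already assembled in Section~\ref{section2}.
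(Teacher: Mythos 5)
Your proposal is correct and follows essentially the same route as the paper, which likewise restricts to $H_0=\ker[\mathcal{A}_d]^\perp$, notes the semigroup bound from \eqref{energyid} and the invertibility of $\mathcal{A}_d+i\mu$ on $H_0$, and applies Theorem~\ref{thm: resolvent decay} together with Proposition~\ref{prop: resolv exp}. Your extra bookkeeping (that $\mathcal{A}_d$ maps $\mathcal{D}(\mathcal{A}_d)$ into $H_0$ so that $\|B\,P_{H_0}Z^0\|=\|\mathcal{A}_dZ^0\|_H$, and the compactness argument extending the resolvent bound to $|\mu|\le1$) correctly fills in details the paper leaves implicit.
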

Proposition~\ref{prop: resolv exp} is obtained from Carleman estimates. We need two kinds of such estimates, first an estimate
far away the boundary, second an estimate up to the boundary. Both estimates are proven in the next section.


\section{Carleman estimates}\label{section3}
\setcounter{equation}{0}
Let $\Omega$ be an open bounded subset of $\R^{d}$. Let $(u,r)$ be a solution of the resolvent problem 
$({\mathcal A}_0 +i\mu )(u,r)=(f,g)\in   (L^2(\Omega))^d \times L^2_m(\Omega)$, that is
\begin{equation}
\begin{cases}
	\label{eq: reslov bd}
&-\nabla r+ i\mu u=f \text{ in } \Omega, \\
&-\div u+i\mu r=g  \text{ in }  \Omega.
\end{cases}
\end{equation}
Here we moreover assume that $(u,r)$ are supported in $K\subset \Omega$ where $K$ is a compact set.
Taking the divergence of the first line and using that $\div u= i\mu r-g $, we obtain
\begin{equation}
	\label{ASCE2}
-\Delta r-\mu^2 r=i\mu g +\div f  \text{ in } \Omega .
\end{equation}
We have to give a Carleman estimate for the solution of this type of equation. This is done in Section~\ref{subsec: away bd}. 
to do that we need some tools on pseudo-differential operators we recall below.
\subsection{Pseudo-differential operators}
	\label{sec: Pseudo-differential operators}
We start this section with some useful notations. If $\alpha=(\alpha_{1},\dots,\alpha_{n})\in\mathbb{N}^{n}$ 
is a multi-index, we introduce the following notation:
\[
\xi^{\alpha}=\xi_{1}^{\alpha_{1}}\ldots\xi_{n}^{\alpha_{n}},\;\;\partial^{\alpha}=\partial_{x_{1}}^{\alpha_{1}}
\ldots\partial_{x_{n}}^{\alpha_{n}},\;\;D^{\alpha}=D_{1}^{\alpha_{1}}\ldots D_{n}^{\alpha_{n}}\;\textrm{ and }\; 
|\alpha|=\alpha_{1}+\dots+\alpha_{n}
\]
where $\displaystyle D_{k}=-i\frac{\pd}{\pd x_{k}}=-i\pd_{x_k}$. We denote by $\mathscr{C}_{c}^{\infty}(V)$ the set of 
functions of class $\mathscr{C}^{\infty}$ compactly supported in $V$. For a compact subset $K$ of $\mathbb{R}^{n}$, 
we note by $\mathscr{C}_{c}^{\infty}(K)$ the set of functions in $\mathscr{C}_{c}^{\infty}(\mathbb{R}^{n})$  supported in $K$. 
The space $L^{2}(V)$ is equipped with the usual norm denoted by $\|u\|_{0}$. For $s\in \mathbb{N}$ we set 
$H^{s}(V)=\{u\in \mathscr{D}'(V);\,\partial^{\alpha}u\in L^{2}(V)\;\forall\,|\alpha|\leq s\}$. The Schwartz space 
$\mathscr{S}(\mathbb{R}^{n})$ is the set of functions of $\mathscr{C}^{\infty}$ class with rapid decay rate. 
Its dual, $\mathscr{S}'(\mathbb{R}^{n})$ is the set of temperate  distributions. If $u\in \mathscr{S}(\mathbb{R}^{n})$ its 
Fourier transform denoted by $\hat{u}$ is defined by $\ds\hat{u}(\xi)=\int_{\R^{n}}e^{-iy.\xi}u(y)\,\ud y$ where 
$\ds y.\xi=\sum_{i=1}^{n}y_{i}\xi_{i}$ stands for the euclidean inner production in $\R^{n}$. Let $f$ and $g$ be two 
smooth functions defined in $V\times\mathbb{R}^{n}$, we define the Poisson bracket by 
$\displaystyle \{f,g\}=\sum_{j=1}^{n}(\partial_{\xi_{j}}f.\partial_{x_{j}}g-\partial_{x_{j}}f.\partial_{\xi_{j}}g)$. 
And if $A$ and $B$ are two operators we define there commutator by $[A,B]=A\circ B-B\circ A$.
\begin{defi}
	\label{def: pseudo}
Let $a(\,.\, ,\,.\, ,\tau)\in \mathscr{C}^{\infty}(\mathbb{R}^n\times\mathbb{R}^{n})$ where $\tau\geq 1$ is a 
large parameter, such that for every muti-index $\alpha,\beta\in\mathbb{N}^{n}$ we have
\[
|\partial_{x}^{\alpha}\partial_{\xi}^{\beta} a(x,\xi,\tau)| \leq C_{\alpha,\beta}\left\langle \xi,\tau\right\rangle^{m-|\beta|}, \quad \forall\,x\in\mathbb{R}^{n},\;\forall\,\xi\in\mathbb{R}^{n},\;\forall\,\tau\geq 1,
\]
where we denoted by $\left\langle \xi,\tau\right\rangle=(|\xi|^2+\tau^{2})^{\frac{1}{2}}$. In this case we say that $a$ is a symbol of order $m$ and we write $a\in S_{\tau}^{m}$. We call principal symbol of $a\in S_{\tau}^{m}$ the equivalence class of $a$ in $S_{\tau}^{m}/S_{\tau}^{m-1}$. We also define $\ds S_{\tau}^{-\infty}=\bigcap_{r\in\R}S_{\tau}^{r}$ and $\ds S_{\tau}^{+\infty}=\bigcup_{r\in\R}S_{\tau}^{r}$.
\end{defi}
\begin{defi}
We define the pseudo-differential operator of order $m$ by 
\begin{equation*}
a(x,D,\tau)u(x)=Op(a)u(x)=\frac{1}{(2\pi)^{n}}\int_{\mathbb{R}^{n}} e^{i x.\xi}a(x,\xi,\tau)\hat{u}(\xi)\,\ud \xi,\quad  \forall\,u\in\mathscr{S}(\mathbb{R}^{n}),
\end{equation*}
where $a\in S_{\tau}^{m}$. The set of the pseudo-differential operator of order $m$ is denoted by $\Psi_{\tau}^{m}$. If $A\in \Psi_{\tau}^{m}$, we denote by $\sigma_{p}(A)$ his principal symbol.
\end{defi}
\begin{rem}
Let $s\in\mathbb{R}$ for $u\in\mathscr{S}'(\mathbb{R}^{n})$ we set the following norm
$$
\|u\|_{\tau,s}=\|\Lambda_{\tau}^{s}u\|_{0}\;\textrm{ with }\Lambda_{\tau}^{s}:=Op(\langle\xi,\tau\rangle^{s}).
$$
Hence we can define the corresponding space
$$
H_{\tau}^{s}(\mathbb{R}^{n})=\{u\in \mathscr{S}'(\mathbb{R}^{n})\,;\;\|u\|_{\tau,s}<\infty\}.
$$
\end{rem}
\begin{thm}\label{-1}
Let $s\in\mathbb{R}$ and $a(x,\xi,\tau)\in S_{\tau}^{m}$, then the operator $Op(a):H_{\tau}^{s}\longrightarrow H_{\tau}^{s-m}$ maps continuously and uniformly for $\tau>1$.
\end{thm}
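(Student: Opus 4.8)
The plan is to deduce Theorem~\ref{-1} from the classical ($\tau$-free) Sobolev mapping property of pseudo-differential operators by a dilation in the frequency variable, which converts the large parameter $\tau$ into a uniformly bounded family of classical symbols. Fix $\tau\ge 1$ and set $U_\tau f(x)=f(x/\tau)$, a bijection of $\mathscr S'(\R^n)$; on the Fourier side $\widehat{U_\tau f}(\eta)=\tau^n\hat f(\tau\eta)$, and since $\langle\tau\eta,\tau\rangle=\tau\langle\eta,1\rangle$, the change of variables $\xi=\tau\eta$ gives directly
\[
\|f\|_{\tau,\sigma}=\tau^{\sigma-n/2}\,\|U_\tau f\|_{H^\sigma(\R^n)}\qquad(\sigma\in\R),
\]
so for each fixed $\tau$ the map $U_\tau$ is an isomorphism of $H_\tau^\sigma$ onto the ordinary ($\tau$-independent) Sobolev space $H^\sigma(\R^n)$. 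The same change of variables in the oscillatory integral defining $Op(a)$ produces the intertwining identity
\[
U_\tau\circ Op(a)\circ U_\tau^{-1}=Op(\tilde a),\qquad \tilde a(x,\eta):=a\!\left(\tfrac{x}{\tau},\tau\eta,\tau\right).
\]

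I would then check that $\tilde a$ is a classical symbol of order $m$ with seminorms controlled uniformly up to an overall power $\tau^m$: from $|\partial_x^\alpha\partial_\xi^\beta a|\le C_{\alpha\beta}\langle\xi,\tau\rangle^{m-|\beta|}$ one obtains, for $\tau\ge 1$,
\[
|\partial_x^\alpha\partial_\eta^\beta\tilde a(x,\eta)|\le\tau^{|\beta|-|\alpha|}\,C_{\alpha\beta}\,\langle\tau\eta,\tau\rangle^{m-|\beta|}=C_{\alpha\beta}\,\tau^{m-|\alpha|}\langle\eta,1\rangle^{m-|\beta|}\le C_{\alpha\beta}\,\tau^{m}\langle\eta,1\rangle^{m-|\beta|},
\]
so $\tilde a$ lies in the classical H\"ormander class $S^m_{1,0}(\R^n\times\R^n)$ with seminorms bounded by $\tau^m$ times constants independent of $\tau$. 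By the classical theorem that $Op(b)\colon H^s(\R^n)\to H^{s-m}(\R^n)$ is bounded for every $b\in S^m_{1,0}$, with norm dominated by finitely many of the seminorms of $b$ (this is standard; its core for order $0$ is the Calder\'on--Vaillancourt $L^2$ estimate --- see e.g. H\"ormander's treatise or Taylor's book on pseudo-differential operators), we get $\|Op(\tilde a)\|_{\mathcal{L}(H^s(\R^n),H^{s-m}(\R^n))}\le C\tau^{m}$ with $C$ depending only on $n,s,m$ and finitely many $C_{\alpha\beta}$. Combining this with the two displayed identities,
\[
\|Op(a)f\|_{\tau,s-m}=\tau^{s-m-n/2}\,\|Op(\tilde a)(U_\tau f)\|_{H^{s-m}(\R^n)}\le C\,\tau^{s-m-n/2}\cdot\tau^{m}\cdot\|U_\tau f\|_{H^s(\R^n)}=C\,\|f\|_{\tau,s},
\]
the powers of $\tau$ cancelling exactly; since $\mathscr S(\R^n)$ is dense in $H^s_\tau$, this extends to all of $H^s_\tau$, with $C$ independent of $\tau\ge 1$, which is the assertion.

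The only substantial ingredient is the classical input invoked above: the Calder\'on--Vaillancourt $L^2$-boundedness of zero-order operators is genuinely delicate, because crude estimation of the Schwartz kernel fails precisely at order $0$ and one must use an almost-orthogonality (Cotlar--Stein) argument or a suitable dyadic decomposition. Granting that textbook fact, the present parameter-dependent statement demands nothing beyond the scaling bookkeeping above, whose sole role is to confirm that the large parameter does not degrade the operator norm.
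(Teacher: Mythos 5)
Your proof is correct. Note that the paper itself offers no proof of Theorem~\ref{-1}: it is recalled as a standard fact of the pseudo-differential calculus with a large parameter (for which the authors point to \cite{LLR:2020}), so there is no in-paper argument to match. Your dilation reduction is a legitimate and clean alternative to the usual treatment: the identities $\|f\|_{\tau,\sigma}=\tau^{\sigma-n/2}\|U_\tau f\|_{H^\sigma}$ and $U_\tau\, Op(a)\,U_\tau^{-1}=Op(\tilde a)$ with $\tilde a(x,\eta)=a(x/\tau,\tau\eta,\tau)$ are exact, the seminorm computation $|\partial_x^\alpha\partial_\eta^\beta\tilde a|\le C_{\alpha\beta}\tau^{m-|\alpha|}\langle\eta\rangle^{m-|\beta|}$ is right, and the powers of $\tau$ cancel as you claim. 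The one point worth making explicit is why the classical operator norm is bounded by $C\tau^m$ and not a higher power: since $b\mapsto Op(b)$ is linear, once the $H^s\to H^{s-m}$ norm is dominated by finitely many seminorms of $b$ it is dominated \emph{linearly} by them (apply the bound to $b$ divided by the largest seminorm), which is exactly what your cancellation needs. For comparison, the route usually taken in the parameter-dependent calculus is more direct: conjugate with the weights, i.e.\ consider $\Lambda_\tau^{s-m}\,Op(a)\,\Lambda_\tau^{-s}$, whose symbol lies in $S_\tau^{0}$ with seminorms uniform in $\tau$; since any symbol in $S_\tau^{0}$ has all $(x,\xi)$-derivatives bounded uniformly in $\tau\ge1$, the Calder\'on--Vaillancourt (or the simpler kernel/almost-orthogonality argument for such symbols) $L^2$ bound applies with constants independent of $\tau$. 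Both routes rest on the same hard analytic input; yours trades the $\tau$-uniform composition calculus for elementary scaling bookkeeping, while the direct route avoids introducing the dilation and stays entirely inside the $\tau$-calculus the paper uses later.
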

\begin{lem}
Let $m\in \mathbb{R}$ and $a_j\in S_{\tau}^{m-j}$ with $j \in \mathbb{N}$. Then there exist $a\in S_{\tau}^{m}$ such that
\[
\forall\,N\in \mathbb{N},\quad a-\sum_{j=0}^{N} a_j \in S_{\tau}^{m-N-1}.
\]
We then write $\ds a\sim \sum_{j}a_j$. The symbol $a$ is unique up to $S_{\tau}^{-\infty}$ in the sens that the difference of two symbols is in $S_{\tau}^{-M}$ for all $M \in\mathbb{N}$. Hence, we identify $a_{0}$ with the principal symbol of $a$.
\end{lem}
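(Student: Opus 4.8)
The plan is to construct $a$ by the classical Borel-type summation of symbols. Fix once and for all a cut-off $\chi\in\mathscr{C}^{\infty}(\R)$ with $0\le\chi\le1$, $\chi(t)=0$ for $t\le1$ and $\chi(t)=1$ for $t\ge2$. For an increasing sequence $1\le\lambda_{0}<\lambda_{1}<\cdots\to\infty$ to be fixed below, set $\chi_{j}(\xi,\tau)=\chi(\langle\xi,\tau\rangle/\lambda_{j})$ and
\[
a(x,\xi,\tau)=\sum_{j\ge0}\chi_{j}(\xi,\tau)\,a_{j}(x,\xi,\tau).
\]
Since $\chi_{j}$ vanishes on $\{\langle\xi,\tau\rangle<\lambda_{j}\}$, on every set $\{\langle\xi,\tau\rangle\le R\}$ only finitely many summands are nonzero, so $a\in\mathscr{C}^{\infty}$ is well defined whatever the $\lambda_{j}$. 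Note also that $\chi_{j}\in S_{\tau}^{0}$ with seminorms bounded uniformly in $j$ (and in $\lambda_{j}\ge1$): $x$-derivatives vanish, while each $\xi$-derivative produces a factor $1/\lambda_{j}$ supported in $\{\lambda_{j}\le\langle\xi,\tau\rangle\le2\lambda_{j}\}$, hence $\lesssim\langle\xi,\tau\rangle^{-1}$.

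First I would fix the $\lambda_{j}$. For $j\ge1$, on $\supp\chi_{j}$ one has $\langle\xi,\tau\rangle^{-1}\le\lambda_{j}^{-1}$; combining this with the bounds for $a_{j}\in S_{\tau}^{m-j}$ and the Leibniz rule, for each $j$ there is a finite constant $C_{j}$ (depending on the seminorms of $a_{j}$ of order $\le j$ and on $\chi$) with
\[
\bigl|\partial_{x}^{\alpha}\partial_{\xi}^{\beta}(\chi_{j}a_{j})\bigr|\le C_{j}\,\lambda_{j}^{-1}\,\langle\xi,\tau\rangle^{m-j+1-|\beta|}\qquad\text{for all }|\alpha|+|\beta|\le j .
\]
Choose $\lambda_{j}$ increasing to $\infty$ large enough that $C_{j}\lambda_{j}^{-1}\le2^{-j}$; only finitely many multi-indices are involved for each $j$, so this is a harmless inductive choice. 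The resulting estimate, $|\partial_{x}^{\alpha}\partial_{\xi}^{\beta}(\chi_{j}a_{j})|\le2^{-j}\langle\xi,\tau\rangle^{m-j+1-|\beta|}$ for $|\alpha|+|\beta|\le j$, is the heart of the matter.

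Now, for any $N\in\N$, I would split
\[
a-\sum_{j=0}^{N}a_{j}=\sum_{j=0}^{N}(\chi_{j}-1)a_{j}\;+\;\chi_{N+1}a_{N+1}\;+\;\sum_{j\ge N+2}\chi_{j}a_{j},
\]
and treat the three groups. Each $(\chi_{j}-1)a_{j}$ is supported in $\{\langle\xi,\tau\rangle\le2\lambda_{j}\}$, where $\langle\xi,\tau\rangle^{m-j-|\beta|}\le\max(1,(2\lambda_{j})^{m-j+M})\,\langle\xi,\tau\rangle^{-M-|\beta|}$ for every $M$; hence $(\chi_{j}-1)a_{j}\in S_{\tau}^{-\infty}$ and the first (finite) sum lies in $S_{\tau}^{-\infty}\subset S_{\tau}^{m-N-1}$. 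The middle term is in $S_{\tau}^{m-N-1}$ since $a_{N+1}\in S_{\tau}^{m-(N+1)}$ and $\chi_{N+1}\in S_{\tau}^{0}$. For the tail, when $j\ge N+2$ one has $m-j+1\le m-N-1$, so the estimate above gives $|\partial_{x}^{\alpha}\partial_{\xi}^{\beta}(\chi_{j}a_{j})|\le2^{-j}\langle\xi,\tau\rangle^{m-N-1-|\beta|}$ for $|\alpha|+|\beta|\le j$; summing the $2^{-j}$ and absorbing the finitely many $j<|\alpha|+|\beta|$ (which contribute symbols of order $\le m-N-2$) shows $\sum_{j\ge N+2}\chi_{j}a_{j}\in S_{\tau}^{m-N-1}$. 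Therefore $a-\sum_{j=0}^{N}a_{j}\in S_{\tau}^{m-N-1}$ for every $N$; the case $N=0$ yields $a=a_{0}+(a-a_{0})\in S_{\tau}^{m}$, the asserted membership.

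Finally, uniqueness is immediate: if $a$ and $\tilde a$ both have the stated property, then for every $N$ we get $a-\tilde a=\bigl(a-\sum_{0}^{N}a_{j}\bigr)-\bigl(\tilde a-\sum_{0}^{N}a_{j}\bigr)\in S_{\tau}^{m-N-1}$, so $a-\tilde a\in\bigcap_{M}S_{\tau}^{-M}=S_{\tau}^{-\infty}$; in particular, taking $N=0$, the class of $a$ in $S_{\tau}^{m}/S_{\tau}^{m-1}$ equals that of $a_{0}$, which identifies $a_{0}$ as the principal symbol of $a$. The one genuinely delicate point in this scheme is the bookkeeping in selecting the $\lambda_{j}$: one must beat, simultaneously for all $|\alpha|+|\beta|\le j$, the product of the relevant seminorm of $a_{j}$ and the uniform seminorm of $\chi_{j}$ by the single gain $\lambda_{j}^{-1}$; since there are finitely many such multi-indices and $\lambda_{j}$ is at our disposal, this succeeds, and everything else is routine Leibniz-rule and support considerations.
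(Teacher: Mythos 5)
Your construction is correct: the locally finite sum $a=\sum_j\chi_j a_j$ with cut-offs at scales $\lambda_j$ chosen so that $C_j\lambda_j^{-1}\le 2^{-j}$, followed by the three-way splitting of $a-\sum_{j=0}^N a_j$, is the classical Borel-type asymptotic summation argument, and the bookkeeping (uniform $S_\tau^0$ seminorms of $\chi_j$, the gain $\langle\xi,\tau\rangle^{-1}\le\lambda_j^{-1}$ on $\supp\chi_j$, the $S_\tau^{-\infty}$ nature of $(\chi_j-1)a_j$, and the treatment of the finitely many indices $j<|\alpha|+|\beta|$ in the tail) is all in order. Note that the paper states this lemma without proof, as a standard fact of the pseudo-differential calculus with large parameter (as in H\"ormander or Lebeau--Le~Rousseau--Robbiano), so there is no in-paper argument to compare with; your proof is precisely the one implicitly invoked, and the uniqueness and principal-symbol statements follow exactly as you say.
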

\begin{thm}\label{9}
Let $a\in S_{\tau}^{m}$ and $b\in S_{\tau}^{m'}$, then $Op(a)\circ Op(b)=Op(c)$ with $c\in S_{\tau}^{m+m'}$ which admits the following asymptotic expansion
\[
c(x,\xi,\tau)\sim\sum_{\alpha}\frac{1}{i^{|\alpha|}\alpha!}\partial_{\xi}^{\alpha}a(x,\xi,\tau)\partial_{x}^{\alpha}b(x,\xi,\tau).
\]
\end{thm}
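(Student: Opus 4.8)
The plan is to run the classical symbolic-calculus argument in the H\"ormander style, keeping every constant uniform in the large parameter $\tau\ge1$; this uniformity is the whole point of the classes $S^m_\tau$ and is available precisely because $\langle\xi,\tau\rangle\ge\tau\ge1$ never vanishes.

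\emph{Step 1 (an oscillatory-integral formula for $c$).} For $u\in\mathscr{S}(\R^n)$, insert the definition of $Op(b)u$ into that of $Op(a)$, write $\widehat{Op(b)u}$ as a double integral, and perform the linear volume-preserving substitutions $\zeta=\eta-\xi$, $z=y-x$. This gives $Op(a)\circ Op(b)=Op(c)$ with
\[
c(x,\xi,\tau)=\frac{1}{(2\pi)^n}\iint e^{-iz\cdot\zeta}\,a(x,\xi+\zeta,\tau)\,b(x+z,\xi,\tau)\,\ud z\,\ud\zeta,
\]
the integral understood in the oscillatory sense (regularize by $e^{-\e|z|^2-\e|\zeta|^2}$ and let $\e\to0^+$ after integrating by parts). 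That $c$ is well defined and lies in $S^{m+m'}_\tau$ follows by splitting the $\zeta$-integral into $\{|\zeta|\le\langle\xi,\tau\rangle/2\}$, where $\langle\xi+\zeta,\tau\rangle\simeq\langle\xi,\tau\rangle$, and $\{|\zeta|\ge\langle\xi,\tau\rangle/2\}$, integrating by parts repeatedly in $z$ (to gain decay in $\zeta$) and in $\zeta$ (to gain decay in $z$), and using the symbol bounds on $a,b$ together with $\langle\xi+\zeta,\tau\rangle\le\langle\xi,\tau\rangle\langle\zeta\rangle$; the $(x,\xi)$-derivatives of $c$ fall only on $a$ and $b$ and are treated the same way.

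\emph{Step 2 (extracting the expansion).} Taylor-expand $a(x,\xi+\zeta,\tau)$ in $\zeta$ at the origin,
\[
a(x,\xi+\zeta,\tau)=\sum_{|\alpha|<N}\frac{\zeta^\alpha}{\alpha!}\,\partial_\xi^\alpha a(x,\xi,\tau)+r_N(x,\xi,\zeta,\tau),
\]
with $r_N$ the usual $N$-fold integral remainder. For each polynomial term write $\zeta^\alpha e^{-iz\cdot\zeta}=(-D_z)^\alpha e^{-iz\cdot\zeta}$, integrate by parts in $z$ so that the derivatives land on $b(x+z,\xi,\tau)$, and use $\frac{1}{(2\pi)^n}\int e^{-iz\cdot\zeta}\,\ud\zeta=\delta(z)$ to put $z=0$. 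Since $D_z^\alpha[b(x+z,\xi,\tau)]\big|_{z=0}=(D_x^\alpha b)(x,\xi,\tau)=i^{-|\alpha|}\partial_x^\alpha b(x,\xi,\tau)$, these terms sum to $\sum_{|\alpha|<N}\frac{1}{i^{|\alpha|}\alpha!}\partial_\xi^\alpha a\,\partial_x^\alpha b$, which is the $N$-truncation of the asserted series.

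\emph{Step 3 (the remainder — the main obstacle).} It remains to show the contribution of $r_N$ to $c$ lies in $S^{m+m'-N}_\tau$, and this is the technical heart of the proof. Split the $\zeta$-integral at $|\zeta|\simeq\langle\xi,\tau\rangle$. On the near piece, $r_N$ carries a genuine factor $|\zeta|^N$ while $(\partial_\xi^\alpha a)(x,\xi+t\zeta,\tau)$ with $|\alpha|=N$ is bounded by $\langle\xi+t\zeta,\tau\rangle^{m-N}\simeq\langle\xi,\tau\rangle^{m-N}$; repeated integrations by parts in $z$ and in $\zeta$ render the integral absolutely convergent and, after collecting the powers of $\langle\xi,\tau\rangle$ and $\langle\zeta\rangle$, produce the gain $\langle\xi,\tau\rangle^{m+m'-N}$. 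On the far piece, enough integrations by parts in $z$ defeat the polynomial growth of $a$ and $b$ in $\zeta$ and yield an arbitrarily large negative power of $\langle\xi,\tau\rangle$, so this piece is negligible. Applying $\partial_x^\beta\partial_\xi^\gamma$ before estimating (it acts only on $a$ and $b$) gives all the $S^{m+m'-N}_\tau$ seminorm bounds. Finally, by the asymptotic-summation lemma stated above, $\sum_\alpha\frac{1}{i^{|\alpha|}\alpha!}\partial_\xi^\alpha a\,\partial_x^\alpha b$ defines a symbol $c_\infty\in S^{m+m'}_\tau$; Steps 2 and 3 show $c-\sum_{|\alpha|<N}\frac{1}{i^{|\alpha|}\alpha!}\partial_\xi^\alpha a\,\partial_x^\alpha b\in S^{m+m'-N}_\tau$ for every $N$, hence $c\in S^{m+m'}_\tau$ and $c\sim c_\infty$, which is the claim.
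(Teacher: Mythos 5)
The paper does not prove this theorem at all: it is recalled as part of the standard pseudo-differential calculus with a large parameter (the classical references, e.g.\ H\"ormander and Lebeau--Le Rousseau--Robbiano, are implicitly relied upon), so there is no paper proof to compare against. Your argument is the standard one for exactly this situation --- oscillatory-integral formula for the composed symbol, Peetre-type inequality $\langle\xi+\zeta,\tau\rangle\lesssim\langle\xi,\tau\rangle\langle\zeta\rangle$, Taylor expansion with the $\zeta^{\alpha}e^{-iz\cdot\zeta}=(-D_z)^{\alpha}e^{-iz\cdot\zeta}$ integration by parts, splitting of the remainder at $|\zeta|\simeq\langle\xi,\tau\rangle$ with uniformity in $\tau\geq 1$ tracked throughout, and the asymptotic-summation lemma --- and it is correct, with the compressed remainder estimate in Step 3 being the usual routine regularization.
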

\begin{thm}
Let $a\in S_{\tau}^{m}$ and $b\in S_{\tau}^{m'}$, then $[Op(a),Op(b)]=Op(c)$ with $c\in S_{\tau}^{m+m'-1}$ and principal symbol
$$
\sigma(c)(x,\xi,\tau)=\frac{1}{i}\{a,b\}(x,\xi,\tau)
$$
which admits the following asymptotic expansion
\[
c(x,\xi,\tau)\sim\sum_{\alpha}\frac{1}{i^{|\alpha|}\alpha!}\left(\partial_{\xi}^{\alpha}a(x,\xi,\tau)\partial_{x}^{\alpha}b(x,\xi,\tau)-\partial_{\xi}^{\alpha}b(x,\xi,\tau)\partial_{x}^{\alpha}a(x,\xi,\tau)\right).
\]
\end{thm}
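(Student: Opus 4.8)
The plan is to obtain this directly from the composition formula (Theorem~\ref{9}) by writing the commutator as a difference of two compositions. First I would apply Theorem~\ref{9} to $Op(a)\circ Op(b)$, which gives $Op(a)\circ Op(b)=Op(c_1)$ with $c_1\in S_{\tau}^{m+m'}$ and
\[
c_1\sim\sum_{\alpha}\frac{1}{i^{|\alpha|}\alpha!}\,\partial_{\xi}^{\alpha}a\,\partial_{x}^{\alpha}b .
\]
Exchanging the roles of $a$ and $b$ and applying the same theorem to $Op(b)\circ Op(a)$ yields $Op(b)\circ Op(a)=Op(c_2)$ with $c_2\in S_{\tau}^{m+m'}$ and
\[
c_2\sim\sum_{\alpha}\frac{1}{i^{|\alpha|}\alpha!}\,\partial_{\xi}^{\alpha}b\,\partial_{x}^{\alpha}a .
\]
Since $Op$ is linear in the symbol and $S_{\tau}^{m+m'}$ is a vector space, $[Op(a),Op(b)]=Op(c_1)-Op(c_2)=Op(c)$ with $c:=c_1-c_2\in S_{\tau}^{m+m'}$. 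Moreover $c$ admits the claimed asymptotic expansion: the difference of two asymptotic expansions is the asymptotic expansion of the difference, which is immediate from the definition of $\sim$, because if $c_1-\sum_{j\le N}(c_1)_j\in S_{\tau}^{m+m'-N-1}$ and $c_2-\sum_{j\le N}(c_2)_j\in S_{\tau}^{m+m'-N-1}$, then $c-\sum_{j\le N}\big((c_1)_j-(c_2)_j\big)\in S_{\tau}^{m+m'-N-1}$.

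The essential point is then the gain of one order. Grouping the terms of the expansion of $c$ according to $|\alpha|$, the contribution of $|\alpha|=0$ is $ab-ba=0$; hence, in the notation of the lemma on asymptotic expansions, the leading term $c_0$ of $c$ vanishes, so that $c=c-c_0\in S_{\tau}^{m+m'-1}$, which is the asserted order. The terms with $|\alpha|=1$ give
\[
\frac{1}{i}\sum_{j=1}^{n}\big(\partial_{\xi_{j}}a\,\partial_{x_{j}}b-\partial_{\xi_{j}}b\,\partial_{x_{j}}a\big)=\frac{1}{i}\{a,b\},
\]
and this represents $c$ in $S_{\tau}^{m+m'-1}/S_{\tau}^{m+m'-2}$, so $\sigma(c)=\frac{1}{i}\{a,b\}$ is the principal symbol of $[Op(a),Op(b)]$, as claimed.

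There is essentially no obstacle here; the argument is formal bookkeeping once Theorem~\ref{9} is granted. The only step deserving an explicit line is the claim that the two asymptotic series may be subtracted term by term and that the resulting series is an asymptotic expansion, in the precise sense of the earlier lemma, of $c=c_1-c_2$; this is what legitimizes passing from a \emph{formal} difference of series to a genuine statement about the symbol $c$ modulo $S_{\tau}^{-\infty}$, and it is also what guarantees that the cancellation of the $|\alpha|=0$ term actually lowers the order of the operator rather than merely of a formal object.
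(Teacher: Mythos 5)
Your proof is correct. The paper states this commutator theorem without proof, as part of the standard pseudo-differential calculus it recalls in Section~\ref{sec: Pseudo-differential operators}; your derivation---writing $[Op(a),Op(b)]=Op(c_1-c_2)$ via Theorem~\ref{9}, using the cancellation of the $|\alpha|=0$ terms to gain one order, and reading off the $|\alpha|=1$ terms as $\frac{1}{i}\{a,b\}$ (which matches the paper's convention for the Poisson bracket)---is exactly the standard argument, and your explicit remark that asymptotic expansions may be subtracted term by term in the sense of the earlier lemma is the right point to spell out.
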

\begin{thm}\label{-2}
Let $a\in S_{\tau}^{m}$, then $Op(a)^{*}=Op(b)$ with $b\in S_{\tau}^{m}$ which admits the following asymptotic expansion
\[
b(x,\xi,\tau)\sim \sum_{\alpha}\frac{1}{i^{|\alpha|}\alpha !}\partial_{\xi}^{\alpha}\partial_{x}^{\alpha}\bar{a}(x,\xi,\tau).
\]
In particular we have $\sigma_{p}(Op(a)^{*})=\bar{a}$.   
\end{thm}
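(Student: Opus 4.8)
The plan is to realize $Op(a)^*$ as an operator defined by an \emph{amplitude} and then to reduce that amplitude to a genuine left symbol, following the classical H\"ormander scheme, while keeping every constant uniform in the large parameter $\tau\ge1$.

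First I would compute the $L^2(\R^n)$-adjoint on $\mathscr S(\R^n)$. Writing $Op(a)u(x)=(2\pi)^{-n}\int e^{ix\cdot\xi}a(x,\xi,\tau)\widehat u(\xi)\,\ud\xi$, pairing in $L^2$ with $v\in\mathscr S(\R^n)$, and using Fubini together with Parseval, one obtains
\[
Op(a)^*v(x)=\frac{1}{(2\pi)^n}\iint e^{i(x-y)\cdot\xi}\,\overline{a(y,\xi,\tau)}\,v(y)\,\ud y\,\ud\xi .
\]
In other words $Op(a)^*$ is the operator with amplitude $\widetilde a(x,y,\xi,\tau):=\overline{a(y,\xi,\tau)}$, which obeys $|\partial_x^{\alpha}\partial_y^{\gamma}\partial_\xi^{\beta}\widetilde a(x,y,\xi,\tau)|\le C_{\alpha,\gamma,\beta}\langle\xi,\tau\rangle^{m-|\beta|}$ uniformly for $\tau\ge1$, the derivatives in $x$ costing nothing here.

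Second I would reduce this amplitude to a left symbol. Inserting the Fourier inversion formula for $v$ into the identity above and changing variables $\xi\mapsto\xi+\zeta$, one is led to $Op(a)^*=Op(b)$ with
\[
b(x,\xi,\tau)=\frac{1}{(2\pi)^n}\iint e^{i(x-y)\cdot\zeta}\,\widetilde a(x,y,\xi+\zeta,\tau)\,\ud y\,\ud\zeta .
\]
This is only an oscillatory integral, so it has to be interpreted, and the estimates $b\in S_\tau^m$ proved, by the usual double integration by parts: one inserts $(1+|\zeta|^2)^{-N}(1-\Delta_y)^N$ and $(1+|x-y|^2)^{-N}(1-\Delta_\zeta)^N$ against the phase and uses the symbol bounds on $\widetilde a$, all of which are uniform in $\tau\ge1$. (Equivalently, one first checks the identity for $a\in S_\tau^{-\infty}$, where every integral converges absolutely, and then extends by continuity with the help of Theorem~\ref{-1}.)

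Third, for the asymptotic expansion I would Taylor-expand $\widetilde a(x,y,\xi+\zeta,\tau)$ in $\zeta$ at $\zeta=0$ to order $M$. Using $\zeta^{\beta}e^{i(x-y)\cdot\zeta}=i^{|\beta|}\partial_y^{\beta}e^{i(x-y)\cdot\zeta}$, an integration by parts in $y$ followed by the Fourier inversion identity $(2\pi)^{-n}\iint e^{i(x-y)\cdot\zeta}F(y)\,\ud y\,\ud\zeta=F(x)$ turns the $\beta$-term of the Taylor sum into $\tfrac{1}{i^{|\beta|}\beta!}\,\partial_\xi^{\beta}\partial_y^{\beta}\widetilde a(x,y,\xi,\tau)\big|_{y=x}$, which for $\widetilde a=\overline{a(y,\xi,\tau)}$ is precisely $\tfrac{1}{i^{|\beta|}\beta!}\,\partial_\xi^{\beta}\partial_x^{\beta}\overline{a(x,\xi,\tau)}\in S_\tau^{m-|\beta|}$; the Taylor remainder, estimated exactly as in the second step, belongs to $S_\tau^{m-M}$ for every $M$. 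Invoking the asymptotic-summation lemma one concludes $b\sim\sum_{\beta}\tfrac{1}{i^{|\beta|}\beta!}\,\partial_\xi^{\beta}\partial_x^{\beta}\overline a$, so in particular $\sigma_p(Op(a)^*)$ is the $\beta=0$ term, namely $\overline a$. The real work is concentrated in the second and third steps: the integrals defining $b$ and the Taylor remainder are not absolutely convergent, so everything rests on the integration-by-parts regularizations and on checking that all constants are independent of $\tau\ge1$ — this is the same bookkeeping that underlies the composition formula of Theorem~\ref{9}, and no genuinely new ingredient is needed.
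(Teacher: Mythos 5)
Your proposal is correct: the paper states Theorem \ref{-2} without proof, as a classical fact of the pseudo-differential calculus with large parameter, and your argument is precisely the standard H\"ormander proof it implicitly relies on — realize $Op(a)^*$ as the operator with amplitude $\overline{a(y,\xi,\tau)}$, reduce to a left symbol by the oscillatory-integral/integration-by-parts regularization, and Taylor-expand to obtain $b\sim\sum_\alpha \frac{1}{i^{|\alpha|}\alpha!}\partial_\xi^\alpha\partial_x^\alpha\bar a$, checking that all bounds are uniform in $\tau\ge1$ because the weight $\langle\xi,\tau\rangle$ satisfies the same Peetre-type inequalities as $\langle\xi\rangle$. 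No gap to report.
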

\begin{thm}[G{\aa}rding inequality]\label{ASC8} 
Let $K$ be a compact subset of $\mathbb{R}^{n}$ and $a(x,\xi,\tau)\in S_{\tau}^{m}$, of principal symbol $a_{m}$. 
We suppose that there exist $C>0$ and $R>0$ such that
\[
\re\,a_{m}(x,\xi,\tau)\geq C\langle\xi,\tau\rangle^{m},\quad\forall\,x\in K,\;\xi\in\mathbb{R}^{n},\;\tau\geq1,\;\langle\xi,\tau\rangle\geq R .
\]
Then for any $0<C'<C$ there exists $\tau_{*}>0$ we have
\[
\re(Op(a)u,u)_{L^{2}(\R^{n})}\geq C'\|u\|_{\tau,\frac{m}{2}}^{2},\quad \forall\,u\in \mathscr{C}_{c}^{\infty}(K),\;\tau\geq\tau_{*}.
\]
\end{thm}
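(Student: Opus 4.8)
The plan is to run the classical H\"ormander argument for the non-sharp G\aa rding inequality: I would construct an approximate square root of $\re a_m-C_2\langle\xi,\tau\rangle^m$ inside the calculus, for a constant $C_2$ with $C'<C_2<C$, and then deduce positivity from the trivial bound $\|Op(q)u\|_0^2\ge 0$, absorbing all the lower‑order terms produced by the symbolic calculus into a remainder that is small for large $\tau$. Throughout, $(\cdot,\cdot)_0$ denotes the $L^2(\R^n)$ inner product.

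\emph{Step 1: an elliptic symbol and its square root.} The symbol bound on $\partial_x a_m$ makes $x\mapsto\langle\xi,\tau\rangle^{-m}\re a_m(x,\xi,\tau)$ Lipschitz uniformly for $\langle\xi,\tau\rangle\ge R$; so I would fix $C'<C_2<C_1<C$ and pick $\theta\in\mathscr{C}_c^\infty(\R^n)$ with $0\le\theta\le1$, $\theta\equiv1$ on $K$, and $\re a_m\ge C_1\langle\xi,\tau\rangle^m$ on $\supp\theta$ for $\langle\xi,\tau\rangle\ge R$, together with $\chi=\chi(\xi,\tau)$, $0\le\chi\le1$, equal to $0$ for $\langle\xi,\tau\rangle\le R$ and to $1$ for $\langle\xi,\tau\rangle\ge 2R$. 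Then I set
\[
p:=\chi\theta\,\big(\re a_m-C_2\langle\xi,\tau\rangle^m\big)+(1-\chi\theta)\,\langle\xi,\tau\rangle^m\ \in\ S_\tau^m .
\]
Wherever $\chi\theta>0$ one has both $\langle\xi,\tau\rangle>R$ and $x\in\supp\theta$, hence $\re a_m-C_2\langle\xi,\tau\rangle^m\ge(C_1-C_2)\langle\xi,\tau\rangle^m>0$; therefore $p\ge\delta\langle\xi,\tau\rangle^m$ everywhere, with $\delta:=\min(1,C_1-C_2)>0$. By the standard fact that a uniformly elliptic positive symbol admits a square root in the class (a routine application of the product and chain rules to $p^{1/2}$, using $p\ge\delta\langle\xi,\tau\rangle^m$), I get a real $q:=p^{1/2}\in S_\tau^{m/2}$ with $q^2=p$. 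The decisive remark is that $1-\chi$ is supported where $\langle\xi,\tau\rangle\le2R$, hence — since $\langle\xi,\tau\rangle\ge\tau$ — where $\tau\le2R$; so for every \emph{fixed} $\tau\ge 2R$ one has $\chi\equiv1$, i.e.\ $p=\theta\,(\re a_m-C_2\langle\xi,\tau\rangle^m)+(1-\theta)\langle\xi,\tau\rangle^m$ in $\xi$.

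\emph{Step 2: calculus bookkeeping.} Put $Q:=Op(q)$. By Theorems~\ref{-2} and~\ref{9}, $Q^*Q=Op(p)+Op(r_1)$ with $r_1\in S_\tau^{m-1}$ (with seminorms uniform in $\tau\ge1$), and $\tfrac12(Op(a)+Op(a)^*)=Op(\re a_m)+Op(r_2)$ with $r_2\in S_\tau^{m-1}$. Now fix $\tau\ge 2R$ and $u\in\mathscr{C}_c^\infty(K)$. Since $Op$ of a symbol independent of $\xi$ is multiplication by that symbol and $\theta\equiv1$ on $K\supset\supp u$, the expression of $p$ from Step~1 gives, after pairing with $u$ — the factors $\theta$ cancel against $u$, and $(1-\theta)\,(\Lambda_\tau^m u)$ is pointwise orthogonal to $u$ —
\[
\big(Op(p)u,u\big)_0=\big(Op(\re a_m)u,u\big)_0-C_2\|u\|_{\tau,m/2}^2 .
\]
Combining this with the two operator identities above, taking real parts, and using $(Q^*Qu,u)_0=\|Qu\|_0^2$ and $(\Lambda_\tau^m u,u)_0=\|u\|_{\tau,m/2}^2$, I obtain
\[
\re\big(Op(a)u,u\big)_0=\|Qu\|_0^2+C_2\|u\|_{\tau,m/2}^2+\re\big(Op(r_2-r_1)u,u\big)_0 .
\]

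\emph{Step 3: conclusion, and the main difficulty.} Dropping $\|Qu\|_0^2\ge0$ and estimating the remainder by Theorem~\ref{-1} — $\|Op(r_2-r_1)u\|_{\tau,-m/2}\le C_*\|u\|_{\tau,m/2-1}\le C_*\tau^{-1}\|u\|_{\tau,m/2}$, the last step because $\langle\xi,\tau\rangle\ge\tau$ — I find
\[
\re\big(Op(a)u,u\big)_0\ \ge\ (C_2-C_*\tau^{-1})\|u\|_{\tau,m/2}^2\ \ge\ C'\|u\|_{\tau,m/2}^2
\]
as soon as $\tau\ge\tau_*:=\max\big(2R,\,C_*/(C_2-C')\big)$, which is the assertion. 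The only genuinely non-routine ingredient is the square‑root construction of Step~1, namely verifying $p^{1/2}\in S_\tau^{m/2}$ with the expected principal symbol, which rests on the uniform lower bound $p\ge\delta\langle\xi,\tau\rangle^m$; the remaining care lies in arranging the cutoffs $\theta,\chi$ so that $p$ stays globally elliptic while still reducing to $\re a_m-C_2\langle\xi,\tau\rangle^m$ on $K$ once $\tau\ge 2R$. Note that no positivity of $\re a_m$ away from $K$ is invoked, and that the non-elliptic low-frequency zone $\langle\xi,\tau\rangle\le 2R$ is confined to the bounded set $\{\tau\le 2R\}$ and hence plays no role for $\tau\ge\tau_*$.
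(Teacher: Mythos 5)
Your argument is correct. Note that the paper does not prove Theorem~\ref{ASC8} at all: it is recalled in Section~\ref{sec: Pseudo-differential operators} as a standard fact of the pseudo-differential calculus with large parameter, so there is no proof in the paper to compare against; what you give is the classical H\"ormander square-root argument, and it checks out. The key points all hold: the symbol bound on $\partial_x a_m$ does give the uniform Lipschitz property needed to propagate the ellipticity $\re a_m\geq C_1\langle\xi,\tau\rangle^m$ from $K$ to $\supp\theta$; your glued symbol $p$ is bounded below by $\delta\langle\xi,\tau\rangle^m$ everywhere (convex combination of two terms each $\gtrsim\langle\xi,\tau\rangle^m$ where the weight is positive), so $p^{1/2}\in S_\tau^{m/2}$ by the routine Fa\`a di Bruno estimate; the observation that $\langle\xi,\tau\rangle\geq\tau$ forces $\chi\equiv1$ once $\tau\geq 2R$ neatly disposes of the low-frequency region, which is exactly where a $\tau$-free argument would need a compactness/Rellich term; and the identity in Step~2 is exact because left quantization gives $Op(\theta\, b)=\theta\,Op(b)$ literally, $\theta u=u$, $(1-\theta)u=0$, and $\Lambda_\tau^{m/2}$ is a self-adjoint Fourier multiplier with $(\Lambda_\tau^{m}u,u)_0=\|u\|_{\tau,m/2}^2$. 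The remainder bookkeeping via Theorems~\ref{-1}, \ref{9} and \ref{-2} (remainders in $S_\tau^{m-1}$, hence $O(\tau^{-1})\|u\|_{\tau,m/2}^2$ after the duality estimate) is also correct and uniform in $\tau$, yielding the stated conclusion with $\tau_*=\max(2R,\,C_*/(C_2-C'))$.
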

\subsection{Local Carleman estimate away from the boundary}
	\label{subsec: away bd}
We set the operator 
$$
P(x,D)=-\mu^{2}-\Delta,
$$
a real values function $\varphi$ and then we define the conjugate operator by 
$$
P_{\varphi}(x,D)=e^{\tau\varphi}P(x,D)e^{-\tau\varphi},
$$
where $\mu$ is a parameter that depends  on $\tau$, precisely we suppose that 
\begin{equation}
	\label{hyp: mu eq tau}
c_{0}\tau\leq |\mu|\leq c_{0}'\tau \quad\forall\, \tau\geq 1,
\end{equation}
for some constants $c_{0}'>c_{0}>0$. Then we have 
\[
P_{\varphi}(x,D)\rconj=-\mu^{2}\rconj-\Delta \rconj+2\tau\nabla\varphi.\nabla \rconj-\tau^{2}|\nabla\varphi|^{2}\rconj+\tau\Delta\varphi \rconj	
\]
whose symbol is given by
\[
\sigma(P_{\varphi})=|\xi|^{2}+2i\tau\nabla\varphi.\xi-\tau^{2}|\nabla\varphi|^{2}+\tau\Delta\varphi-\mu^{2}
\]
and with principal symbol $p_{\varphi}$ given by
\[
p_{\varphi}(x,\xi,\tau)=|\xi+i\tau\nabla\varphi|^{2}-\mu^{2}=|\xi|^{2}+2i\tau\nabla\varphi.\xi-\tau^{2}|\nabla\varphi|^{2}-\mu^{2}.
\]
We define the following self-adjoint operators
\[
Q_{2}=\frac{P_{\varphi}+P_{\varphi}^{*}}{2}\quad\textrm{and}\quad Q_{1}=\frac{P_{\varphi}-P_{\varphi}^{*}}{2i}
\]
with principal symbols respectively
\[
q_{2}(x,\xi,\tau)=|\xi|^{2}-\tau^{2}|\nabla\varphi|^{2}-\mu^{2}\quad\textrm{and}\quad q_{1}(x,\xi,\tau)=2\tau\nabla\varphi.\xi.
\]
Noting that $P_{\varphi}=Q_{2}+iQ_{1}$ and $p_{\varphi}=q_{2}+iq_{1}$.

We assume that the weight function $\varphi\in\mathscr{C}^{\infty}(\mathbb{R}^{n},\mathbb{R})$ satisfies the following sub-ellipticity condition in $K$ a compact set of $\R^d$, if
\begin{align}
	\label{sub-ellipticity}
	&|\nabla\varphi|>0 \text{ in }  K\\
	& \forall\,(x,\xi,\tau)\in K\times\mathbb{R}^{n}\times[1,+\infty);\;p_{\varphi}(x,\xi,\tau)=0\Longrightarrow \{q_{2},q_{1}\}(x,\xi,\tau)\geq C\langle\xi,\tau\rangle^{3}>0. \notag
\end{align}
Note that the constant $C$ does not depend on $\mu$ assuming \eqref{hyp: mu eq tau}.
\begin{remark}
Noting that 
$$
p_{\varphi}(x,\xi,\tau)=0\;\Longleftrightarrow\;|\xi|^{2}=\tau^{2}|\nabla\varphi|^{2}+\mu^{2}\text{ and }\nabla\varphi.\xi=0.
$$
\end{remark}
\begin{lem}\label{-11}
Let $\psi\in\mathscr{C}^{\infty}(\mathbb{R}^{n},\mathbb{R})$ such that $|\nabla\psi|>0$ in $K$. Then for $\lambda$ large enough $\varphi=e^{\lambda\psi}$ satisfies the sub-ellipticity assumption in $K$.
\end{lem}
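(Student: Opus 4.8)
\emph{Proof strategy.} The plan is to follow the classical Hörmander sub-ellipticity computation, the only new ingredient being that the parameter $\mu$ is comparable to $\tau$ via \eqref{hyp: mu eq tau}. The first line of \eqref{sub-ellipticity} is immediate: $\nabla\varphi=\lambda e^{\lambda\psi}\nabla\psi$, so $|\nabla\varphi|=\lambda e^{\lambda\psi}|\nabla\psi|>0$ on $K$ for every $\lambda>0$, since $|\nabla\psi|>0$ there. Hence everything reduces to the bracket inequality on the characteristic set $\{p_\varphi=0\}$.

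Next I would compute the Poisson bracket once, for a general real weight $\varphi$. From $q_2=|\xi|^2-\tau^2|\nabla\varphi|^2-\mu^2$, $q_1=2\tau\,\nabla\varphi\cdot\xi$ and the definition of $\{\cdot,\cdot\}$, a direct computation gives
\[
\{q_2,q_1\}(x,\xi,\tau)=4\tau\,(\varphi''(x)\xi)\cdot\xi+4\tau^3\,(\varphi''(x)\nabla\varphi(x))\cdot\nabla\varphi(x),
\]
$\varphi''$ being the Hessian of $\varphi$; the term $-\mu^2$, being independent of $(x,\xi)$, does not contribute. Specializing to $\varphi=e^{\lambda\psi}$, so that $\varphi''=\lambda e^{\lambda\psi}(\lambda\,\nabla\psi\otimes\nabla\psi+\psi'')$, and recalling that on $\{p_\varphi=0\}$ one has $\nabla\varphi\cdot\xi=0$ (equivalently $\nabla\psi\cdot\xi=0$) together with $|\xi|^2=\tau^2|\nabla\varphi|^2+\mu^2$, the term $\lambda(\nabla\psi\cdot\xi)^2$ drops out and one is left, on $\{p_\varphi=0\}$, with
\[
\{q_2,q_1\}=4\tau^3\lambda^4e^{3\lambda\psi}|\nabla\psi|^4+4\tau^3\lambda^3e^{3\lambda\psi}\,(\psi''\nabla\psi)\cdot\nabla\psi+4\tau\lambda e^{\lambda\psi}\,(\psi''\xi)\cdot\xi.
\]

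Then I would show the first term dominates for $\lambda$ large. Since $K$ is compact there is a constant $m>0$ with $|\nabla\psi|\ge m$ on $K$, and (for each fixed $\lambda$) upper bounds for $e^{\lambda\psi}$, $|\nabla\psi|$ and $\|\psi''\|$ on $K$; one may and should take $\psi$ bounded below by a positive constant on $K$, so that $\tau^2|\nabla\varphi|^2=\tau^2\lambda^2e^{2\lambda\psi}|\nabla\psi|^2\ge\mu^2$ on $\{p_\varphi=0\}$ once $\lambda$ is large (using $|\mu|\le c_0'\tau$), hence $|\xi|^2\le 2\tau^2|\nabla\varphi|^2$ there. Consequently the last two terms above are bounded in absolute value by a constant multiple of $\tau^3\lambda^3e^{3\lambda\psi}|\nabla\psi|^2$, while the first is $\ge 4m^2\tau^3\lambda^4e^{3\lambda\psi}|\nabla\psi|^2$. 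Choosing $\lambda$ large enough (depending only on $m$ and $\|\psi''\|_{L^\infty(K)}$) gives $\{q_2,q_1\}\ge c\,\tau^3$ on $\{p_\varphi=0\}$ for some $c>0$. Finally, on $\{p_\varphi=0\}$ one has $\langle\xi,\tau\rangle^2=|\xi|^2+\tau^2\le C\tau^2$ with $C$ independent of $\mu$ (again by \eqref{hyp: mu eq tau}), whence $\{q_2,q_1\}\ge c\,\tau^3\ge C'\langle\xi,\tau\rangle^3>0$, which is the second line of \eqref{sub-ellipticity}, with all constants independent of $\mu$.

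The main obstacle is the estimate of the third step: one must ensure that the ``good'' term $\lambda^4e^{3\lambda\psi}|\nabla\psi|^4$ beats every Hessian error term uniformly over the characteristic set and over all $\mu$ with $c_0\tau\le|\mu|\le c_0'\tau$. The only genuinely new point compared with the $\mu$-free Laplace case is the contribution of $\mu^2$ to $|\xi|^2$ on $\{p_\varphi=0\}$: this is exactly where the comparability \eqref{hyp: mu eq tau} (together with the positivity of $\psi$ on $K$) is used, namely to keep $|\xi|$, and therefore $\langle\xi,\tau\rangle$, of order $\tau$ on $\{p_\varphi=0\}$ with constants free of $\mu$, so that the final lower bound is homogeneous of degree $3$ in $\langle\xi,\tau\rangle$ with a $\mu$-independent constant, as required.
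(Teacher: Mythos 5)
Your computation is essentially the paper's own proof: you compute $\{q_2,q_1\}=4\tau\bigl({}^{t}\xi\varphi''\xi+\tau^{2}\,{}^{t}(\nabla\varphi)\varphi''\nabla\varphi\bigr)$, substitute $\varphi=e^{\lambda\psi}$, use the characteristic relations $\nabla\varphi\cdot\xi=0$ and $|\xi|^{2}=\tau^{2}|\nabla\varphi|^{2}+\mu^{2}$, and let the term $\lambda^{4}e^{3\lambda\psi}|\nabla\psi|^{4}$ absorb the Hessian errors uniformly in $\mu$ thanks to \eqref{hyp: mu eq tau}; the final passage from $\{q_2,q_1\}\gtrsim\tau^{3}$ to $\gtrsim\langle\xi,\tau\rangle^{3}$ via $|\xi|\sim\tau$ on $p_\varphi=0$ is also the paper's. (That you kill the $(\nabla\psi\cdot\xi)^2$ term by the characteristic relation rather than simply discarding it as nonnegative is immaterial.)

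The one point you leave unjustified is the reduction ``one may and should take $\psi$ bounded below by a positive constant on $K$.'' This step is genuinely load-bearing in your argument: if $\psi<0$ somewhere on $K$, then $\tau^{2}|\nabla\varphi|^{2}=\tau^{2}\lambda^{2}e^{2\lambda\psi}|\nabla\psi|^{2}$ can become smaller than $\mu^{2}\gtrsim c_0^{2}\tau^{2}$ as $\lambda\to\infty$, and your bound $|\xi|^{2}\le 2\tau^{2}|\nabla\varphi|^{2}$ on the characteristic set fails. The lemma, however, is stated for an arbitrary $\psi$ with $|\nabla\psi|>0$ on $K$, and adding a constant to $\psi$ changes the weight: $\varphi$ gets multiplied by $\beta=e^{\lambda c}$. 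So you must also check that the sub-ellipticity condition \eqref{sub-ellipticity} for $\beta\varphi$ is equivalent to that for $\varphi$; this is the paper's opening remark, obtained by replacing $\tau$ with $\tau/\beta$ (which only alters the constants $c_0,c_0'$ in \eqref{hyp: mu eq tau} and the constant $C$ in \eqref{sub-ellipticity}, since $p_{\beta\varphi}(x,\xi,\tau)=p_{\varphi}(x,\xi,\beta\tau)$ and likewise for $q_1,q_2$ and their bracket). Adding this one observation makes your proof complete and identical in substance to the paper's.
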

\begin{proof}
We can assume that $\psi\ge0$, as we can add a constant to $\psi$ and $\varphi$ is multiplied by a constant $\beta$. 
Changing $\tau $ in $\tau/\beta$ we can see that sub-ellipticity condition is also satisfied for a different constant $C$.
A straightforward calculation shows that
$$
\{q_{2},q_{1}\}(x,\xi,\tau)=4\tau\left({}^{\phantom{0}t}\xi\varphi''\xi+\tau^{2}{}^{\phantom{0}t}(\nabla\varphi)\varphi''\nabla\varphi\right)\label{12}.
$$
Using the fact that $\varphi=e^{\lambda\psi}$ then we have 
$$
\nabla\varphi=\lambda\nabla\psi\varphi,\quad\varphi'_{j}=\lambda\varphi\psi'_{j}\;\text{ and }\;\varphi''_{jk}=\lambda\varphi\psi''_{jk}+\lambda^{2}\varphi\psi'_{j}\psi'_{k},\;1\leq j,k\leq n,
$$
therefore we obtain
$$
\{q_{2},q_{1}\}=4\tau\lambda^{3}\varphi^{3}\left(\lambda\tau^{2}|\nabla\psi|^{4}+\tau^{2}{\phantom{0}}^{t}(\nabla\psi)\psi''\nabla\psi+|\lambda\varphi|^{-2}{}^{\phantom{1}t}\xi\psi''\xi+\lambda^{-1}|\varphi|^{-2}|\nabla\psi.\xi|^{2}\right).
$$
Now if $p_{\varphi}=0$ then $|\xi|^{2}=\tau^{2}|\nabla\varphi|^{2}+\mu^{2}=\tau^{2}\lambda^{2}\varphi^{2}|\nabla\psi|^{2}+\mu^{2}$, which gives that
$$
|\lambda\varphi|^{-2}{}^{\phantom{1}t}\xi\psi''\xi\geq -|\psi''|\left(\tau^{2}|\nabla\psi|^{2}+|\lambda\varphi|^{-2}\mu^{2}\right)\geq -C\tau^{2}\left(|\nabla\psi|^{2}+\lambda^{-2}\right).
$$
Besides, we have
$$
\tau^{2}{\phantom{0}}^{t}(\nabla\psi)\psi''\nabla\psi\geq -C\tau^{2}|\nabla\psi|^{2}.
$$
Then it follows from these estimates that
\begin{align*}
\{q_{2},q_{1}\}&\geq4\tau\lambda^{3}\varphi^{3}\left(\lambda\tau^{2}|\nabla\psi|^{4}+\tau^{2}{\phantom{0}}^{t}(\nabla\psi)\psi''\nabla\psi+|\lambda\varphi|^{-2}{}^{\phantom{1}t}\xi\psi''\xi\right)
\\
&\geq4\tau\lambda^{3}\varphi^{3}(\lambda\tau^{2}|\nabla\psi|^{4}-C\tau^{2}|\nabla\psi|^{2}-C\tau^{2}\lambda^{-2}).
\end{align*}
Since $|\nabla\psi|>0$ in the compact set $K$ then for $\lambda$ large enough  we have $\{q_{2},q_{1}\}\geq C_{\lambda}\tau^3>0$.  As $|\xi|$ is comparable to $\tau$ on $p_\varphi=0$, we obtain the result.
\end{proof}

\begin{lem}\label{13}
Let $f$ and $g$ be two real continuous functions defined  in $ K$ such that $f$ is positive on a compact subset $K$ of $\R^d$ and verifies that
\[
\forall\, y\in K,\qquad f(y)= 0\Longrightarrow g(y)\geq L>0.
\]
We set  $h_{\kappa}=\kappa f+g$, then for $\kappa$ sufficiently large then $h_{\kappa}\geq C$ for some constant $C>0$.
\end{lem}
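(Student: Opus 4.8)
The plan is to argue by contradiction, exploiting the compactness of $K$, which is the natural framework since both $f$ and $g$ are continuous on $K$ (and recall that, in the French convention used here, ``$f$ positive on $K$'' means $f\ge 0$ on $K$). Suppose the conclusion fails. Then for every $n\in\mathbb{N}$, taking $\kappa=n$, there is a point $y_n\in K$ with $h_n(y_n)=n f(y_n)+g(y_n)\le 1/n$. Since $K$ is compact, after passing to a subsequence we may assume $y_n\to y_\ast\in K$.

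The key step is to show that $f(y_\ast)=0$. Since $g$ is continuous on the compact set $K$, it is bounded below, say $g\ge -M$ on $K$; hence $n f(y_n)\le 1/n-g(y_n)\le 1/n+M$, so $f(y_n)\le (1/n+M)/n\to 0$. By continuity of $f$ this forces $f(y_\ast)=0$. Now the hypothesis of the lemma applies at $y_\ast$ and gives $g(y_\ast)\ge L>0$. On the other hand, using $f\ge 0$ we have $h_n(y_n)\ge g(y_n)\to g(y_\ast)\ge L$, which contradicts $h_n(y_n)\le 1/n\to 0$. This proves the existence of $\kappa$ and $C$; by tracking the argument one sees that any $C<L$ works for $\kappa$ chosen large enough.

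There is no genuine obstacle here: the only point requiring a little care is the region where $g$ is small (and $f$ must therefore be bounded away from $0$), but in the contradiction argument this is handled automatically, so that no quantitative modulus of continuity is ever needed. If one prefers a direct proof, set $K_0=\{y\in K:\ g(y)\le L/2\}$, which is closed hence compact; on $K_0$ the contrapositive of the hypothesis together with $f\ge 0$ gives $f>0$, so $f\ge m>0$ on $K_0$ by compactness. Choosing $\kappa$ with $\kappa m-M\ge L/2$ then yields $h_\kappa\ge \kappa m-M\ge L/2$ on $K_0$ and $h_\kappa\ge g>L/2$ on $K\setminus K_0$, so $h_\kappa\ge L/2=:C>0$ on all of $K$.
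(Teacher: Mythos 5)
Your proof is correct, but it takes a different route from the paper. The paper argues directly and locally: for each $y_0\in K$ it distinguishes $f(y_0)=0$ (where $h_\kappa\geq g\geq L/2$ on a neighborhood, for every $\kappa$, using $f\geq0$) from $f(y_0)>0$ (where a neighborhood carries uniform bounds $f\geq C_1$, $|g|\leq C_2$, giving a local threshold $\kappa_{y_0}=(L+C_2)/C_1$), and then extracts a finite subcover and takes the maximum of the local thresholds. You instead argue by contradiction with sequential compactness: a sequence $y_n$ with $h_n(y_n)\leq 1/n$ forces $f(y_n)\to0$ (since $g$ is bounded below), hence $f(y_\ast)=0$ at a limit point, so $g(y_\ast)\geq L$, contradicting $h_n(y_n)\geq g(y_n)\to g(y_\ast)$; your reading of ``positive'' as $f\geq0$ is the right one (it is exactly what the paper's Case 1 and its application to $f=q_2^2+q_1^2$ require), and it is what makes $h_\kappa$ monotone in $\kappa$, so the specialization to $\kappa=n$, $C=1/n$ is a legitimate negation of the conclusion. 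Your alternative direct proof is closer in spirit to the paper but organizes the compactness differently: instead of a covering indexed by points and a partition into local neighborhoods, you split $K$ by the sublevel set $K_0=\{g\leq L/2\}$, on which the contrapositive of the hypothesis gives $f\geq m>0$, and a single explicit threshold $\kappa\geq(L/2+M)/m$ suffices. What each buys: the paper's covering argument yields explicit local constants and generalizes readily to microlocal settings (which is how such lemmas are usually deployed there), while your contradiction argument is shorter and, as you note, shows that any constant $C<L$ is attainable, slightly sharper than the $L/2$ obtained in the paper; your direct variant gives the cleanest quantitative bookkeeping. No gaps.
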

\begin{proof}
Let $y_0\in K$ to prove the result we distinguish two cases.
\\
\textbf{Case 1: We assume $f(y_0)=0$.} Then according to the assumption made in this lemma we have 
$h_{\kappa}(y_0)=g(y_0)\geq L$. 
Then there exists a neighborhood of $y_0$, $V_{y_0}$ such that for $y\in V_{y_0}$ and every $\kappa>0$,  
$h_{\kappa}(y)\ge g(y)\geq L/2$. Let $\kappa_{y_0}=1$.
\\
\textbf{Case 2:  We assume $f(y_0)>0$.} Since $f$ and $g$  are continuous, there exist $V_{y_0}$ a neighborhood of 
$y_0$ and $C_1,C_2>0$  such that $f(y)\ge C_1$ and $|g(y)|\le C_2$ for all $ y\in V_{y_0}$. Then for all $\kappa\ge (L+ C_2)/C_1$, 
$h_{\kappa}(y)\ge L  $. Let $\kappa_{y_0}=(L+ C_2)/C_1$.

We cover $K$, by compactness argument, by a finite number of such neighborhoods $V_{y_{1}},\dots,V_{y_{p}}$ with associated
$\kappa_{j}=\kappa_{y_j}$. Taking  $\ds\kappa=\max_{1\leq j\leq p}\{\kappa_{j}\}$, we have $h_{\kappa}(y)\ge L/2$ on each $V_{y_{j}}$, then on $K$.
This completes the proof.
\end{proof}
\begin{lem}\label{14}
Let $V$ an open bounded subset of $\mathbb{R}^{d}$ and $\kappa>0$. We suppose that $\varphi$ verifies the sub-ellipticity assumption on $K$  and we set $\rho_{\kappa}=\kappa(q_{2}^{2}+q_{1}^{2})+\tau \{q_{2},q_{1}\}$. Then for $\kappa$ large enough there exists $C>0$ such that for all $(x,\xi)\in K\times\mathbb{R}^{d}$ and $\tau\geq 1$ we have $\rho_{\kappa}(x,\xi)\geq C\langle\xi,\tau\rangle^{4}$.
\end{lem}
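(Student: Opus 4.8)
The plan is to run the standard Hörmander-type homogeneity-and-compactness argument, reducing the claim to Lemma \ref{13}. First I would record the homogeneities: under the dilation $(\xi,\tau,\mu)\mapsto(t\xi,t\tau,t\mu)$, $t>0$, the symbols $q_2$ and $q_1$ are positively homogeneous of degree $2$, and $\{q_2,q_1\}$ of degree $3$. The point to notice is that the constant term $-\mu^2$ of $q_2$ has vanishing derivatives in $x$ and $\xi$, so it drops out of the Poisson bracket; hence $\{q_2,q_1\}$ is independent of $\mu$ and is exactly the degree-$3$ expression already computed. It follows that $\rho_\kappa=\kappa(q_2^2+q_1^2)+\tau\{q_2,q_1\}$ is positively homogeneous of degree $4$, so it is enough to show that $\rho_\kappa\ge C>0$ on the compact set
\[
\Sigma=\bigl\{(x,\xi,\tau,\mu):\ x\in K,\ |\xi|^2+\tau^2=1,\ \tau\ge0,\ c_0\tau\le|\mu|\le c_0'\tau\bigr\};
\]
indeed, for general data with $\langle\xi,\tau\rangle\ge1$ one rescales by $t=\langle\xi,\tau\rangle^{-1}$, which maps the point into $\Sigma$ and preserves \eqref{hyp: mu eq tau}, and then multiplies the inequality back by $\langle\xi,\tau\rangle^4$.

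Next I would extend the sub-ellipticity property \eqref{sub-ellipticity} from $\tau\ge1$ to all $\tau>0$: given $p_\varphi(x,\xi,\tau)=0$ with $0<\tau<1$, dilate by $t=1/\tau$ so that $\tau'=t\tau=1$, apply \eqref{sub-ellipticity} at $(x,t\xi,1,t\mu)$ — the constraint \eqref{hyp: mu eq tau} is dilation-invariant — and dilate back using the homogeneity of $p_\varphi$ (degree $2$) and of $\{q_2,q_1\}$ (degree $3$); this gives $\{q_2,q_1\}(x,\xi,\tau)\ge C\langle\xi,\tau\rangle^3$ for every $\tau>0$. Moreover at $\tau=0$ one has $p_\varphi=|\xi|^2$, which equals $1\neq0$ on $\Sigma$; hence the zero set $Z:=\{(x,\xi,\tau,\mu)\in\Sigma:\ q_1=q_2=0\}=\{p_\varphi=0\}\cap\Sigma$ is a compact subset of $\{\tau>0\}$, and so $\tau\ge\tau_0>0$ on $Z$ for some $\tau_0$.

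Then I would apply Lemma \ref{13} on $\Sigma$ (its proof works verbatim for continuous functions on a compact metric space) with $f=q_2^2+q_1^2=|p_\varphi|^2\ge0$ and $g=\tau\{q_2,q_1\}$. If $f$ vanishes at a point of $\Sigma$, that point lies in $Z$, where $\langle\xi,\tau\rangle=1$ and the extended sub-ellipticity gives $\{q_2,q_1\}\ge C>0$, while $\tau\ge\tau_0$; hence $g\ge C\tau_0=:L>0$ on $\{f=0\}$. Lemma \ref{13} then produces $\kappa$ large and a constant $C'>0$ with $\rho_\kappa=\kappa f+g\ge C'$ on $\Sigma$, and the homogeneity step of the first paragraph upgrades this to $\rho_\kappa(x,\xi,\tau)\ge C'\langle\xi,\tau\rangle^4$ for all $x\in K$, $\xi\in\mathbb{R}^d$, $\tau\ge1$ (and $\mu$ subject to \eqref{hyp: mu eq tau}).

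The only genuinely delicate point is the interplay between the hypothesis, stated for $\tau\ge1$, and the compact model $\Sigma$, on which $\tau$ ranges over $[0,1]$: one has to verify both that sub-ellipticity survives the dilation to small $\tau$ and that the zero set $Z$ stays uniformly away from $\tau=0$, so that the extra factor $\tau$ in $g$ does not spoil the strict lower bound needed to invoke Lemma \ref{13}. Everything else is bookkeeping with the dilations and with \eqref{hyp: mu eq tau}.
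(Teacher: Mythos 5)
Your proof is correct, and it rests on the same two ingredients as the paper's — the degree-$4$ homogeneity of $\rho_\kappa$ in $(\xi,\tau,\mu)$ and Lemma~\ref{13} — but it organizes them differently. The paper first treats the region $|\xi|\ge\beta\tau$ by a direct expansion: there the term $\kappa q_2^2$ alone is comparable to $\langle\xi,\tau\rangle^4$ and absorbs the bracket terms, so neither sub-ellipticity nor Lemma~\ref{13} is needed; Lemma~\ref{13} is then applied only on the compact set $\{|\xi|^2+\tau^2+\mu^2=1,\ |\xi|\le\beta\tau,\ c_0\tau\le|\mu|\le c_0'\tau\}$, where the constraint $|\xi|\le\beta\tau$ automatically bounds $\tau$ from below, so the factor $\tau$ in $g=\tau\{q_2,q_1\}$ causes no trouble. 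You instead apply Lemma~\ref{13} once on the whole sphere $|\xi|^2+\tau^2=1$, and the two supplementary observations you make are precisely what replaces the paper's first region: the characteristic set $\{q_1=q_2=0\}$ avoids $\tau=0$ (since there $\mu=0$ and $q_2=|\xi|^2=1$), so $\tau\ge\tau_0>0$ on it, and the sub-ellipticity condition \eqref{sub-ellipticity}, stated for $\tau\ge1$, extends to $0<\tau<1$ by the homogeneities of $p_\varphi$ and $\{q_2,q_1\}$ together with the dilation invariance of \eqref{hyp: mu eq tau}. That last extension is in fact also needed, implicitly, in the paper's argument (on its compact set one has $\tau<1$ as well), and your write-up has the merit of making it explicit. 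What the paper's two-regime split buys is an explicit quantitative treatment of the elliptic region $|\xi|\gg\tau$, with the interplay between $\beta$ and $\kappa$ visible; what your single compactness argument buys is brevity, at the price of having to verify the delicate point you correctly identified, namely that the zero set of $q_1^2+q_2^2$ stays uniformly away from $\tau=0$ so that the factor $\tau$ in $g$ does not destroy the lower bound required by Lemma~\ref{13}.
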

\begin{proof}
First we assume $|\xi|$ large with respect $\tau$, that is  $|\xi|\ge \beta\tau$ for $\beta $ sufficiently large, to be fixed below.
We have
\begin{align}
	\label{eq: rho k}
\rho_{\kappa}(x,\xi)&=\kappa(q_{2}^{2}+q_{1}^{2})+\tau \{q_{2},q_{1}\}  \notag
\\
&=\kappa\left(|\xi|^{2}-(\tau^{2}|\nabla\varphi|^{2}+\mu^{2})\right)^{2}+4\kappa\tau^{2}(\nabla\varphi.\xi)^{2}
+4\tau^2{\phantom{0}}^{t}\xi\varphi''\xi+\tau^{4}{\phantom{0}}^{t}(\nabla\varphi)\,\varphi''\nabla\varphi  \notag
\\
&=\kappa\langle\xi,\tau\rangle^{4}\left(1-\frac{(\tau^{2}(1+|\nabla\varphi|^{2})+\mu^{2})}{\langle\xi,\tau\rangle^{2}}\right)^{2}+4\kappa\tau^{2}(\nabla\varphi.\xi)^{2}+4\tau^2{\phantom{0}}^{t}\xi\varphi''\xi
+\tau^{4}{\phantom{0}}^{t}(\nabla\varphi)\,\varphi''\nabla\varphi     
\\
&\geq C'\langle\xi,\tau\rangle^{4}-C\tau^2|\xi|^{2}-C\tau^{4},   \notag
\end{align}
if $\beta $ is sufficiently large such that $\ds\frac{(\tau^{2}(1+|\nabla\varphi|^{2})+\mu^{2})}{\langle\xi,\tau\rangle^{2}}\le 1/2$ and for 
some constants $C',C>0$. If $\beta$ is sufficiently large we obtain $C\tau^2|\xi|^{2}+C\tau^{4}\le C'\langle\xi,\tau\rangle^{4}/2$,
from~\eqref{eq: rho k} we obtain $\rho_{\kappa}(x,\xi) \geq C''\langle\xi,\tau\rangle^{4}$, for $C''>0$. This fixes $\beta$.

Second we assume $|\xi|\le \beta\tau$. As $ \rho_{\kappa}$ is homogeneous of degree 4 in $(\xi,\tau,\mu)$, we can prove the estimate on 
$K'=\{(x,\xi,\tau,\mu)\in K\times\R^d\times [0, +\infty)\times \R, \ |\xi|^2+\tau^2+\mu^2=1,\  |\xi|\le \beta\tau ,\ c_{0}\tau\leq | \mu|\leq c_{0}'\tau \}$ 
taking into account of \eqref{hyp: mu eq tau}. As $K'$ is a compact set, we can apply  Lemma \ref{13} by taking 
$\displaystyle f=q_{2}^{2}+q_{1}^{2}$ and $\displaystyle g=\tau \{q_{2},q_{1}\}$. This completes the proof.
\end{proof}
\begin{thm}\label{21}
Let $\Omega$ be an open bounded set of $\mathbb{R}^{n}$ and $\varphi$ be a function that satisfies the  sub-ellipticity assumption in $K$. Then there exist $\tau_{*}>0$ and $C>0$ such that
\begin{equation}\label{ASCE1}
\tau^{3}\|e^{\tau\varphi}r\|_{0}^{2}+\tau\|e^{\tau\varphi}\nabla r\|_{0}^{2}+\tau^{-1}\sum_{\alpha=2}\|e^{\tau\varphi}D^{\alpha}r\|_{0}^{2}\leq C\|e^{\tau\varphi}Pr\|_{0}^{2}.
\end{equation}
for all $r\in\mathscr{C}_{c}^{\infty}(K)$,  $\tau\geq\tau_{*}$ and $\mu$ satisfying \eqref{hyp: mu eq tau}.
\end{thm}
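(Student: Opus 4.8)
The plan is to prove \eqref{ASCE1} by the Hörmander scheme: conjugate away the weight, split the conjugated operator into its self-adjoint and skew-adjoint parts, convert the sub-ellipticity hypothesis into a positivity statement via the Gårding inequality, and close the estimate by comparing two quadratic forms. All constants will be uniform in $\mu$ satisfying \eqref{hyp: mu eq tau}, since both Lemma~\ref{14} and the symbol estimates for $P_\varphi$ are.

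First I would set $v=e^{\tau\varphi}r$, so that $r\in\mathscr{C}_c^\infty(K)$ corresponds to $v\in\mathscr{C}_c^\infty(K)$ and $e^{\tau\varphi}Pr=P_\varphi v$. Since $\nabla v=e^{\tau\varphi}(\nabla r+\tau r\nabla\varphi)$ and analogously for second derivatives, and $\varphi$ is smooth and bounded on $K$, a routine computation bounds the left-hand side of \eqref{ASCE1} above by $C\tau^{-1}\|v\|_{\tau,2}^2$; here one uses that $\|v\|_{\tau,2}^2$ is equivalent, for $v\in\mathscr{C}_c^\infty$, to $\tau^4\|v\|_0^2+\tau^2\|\nabla v\|_0^2+\sum_{|\alpha|=2}\|D^\alpha v\|_0^2$. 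Thus it suffices to establish the reduced estimate $\tau^{-1}\|v\|_{\tau,2}^2\le C\|P_\varphi v\|_0^2$ for $v\in\mathscr{C}_c^\infty(K)$ and $\tau$ large.

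Next, writing $P_\varphi=Q_2+iQ_1$ with $Q_1,Q_2$ formally self-adjoint of order $2$, expanding $\|P_\varphi v\|_0^2$ gives the energy identity $\|P_\varphi v\|_0^2=\|Q_2v\|_0^2+\|Q_1v\|_0^2+i([Q_2,Q_1]v,v)_0$, where $i[Q_2,Q_1]$ is self-adjoint so the cross term is real. The heart of the proof is to introduce the self-adjoint operator $R_\kappa:=\kappa(Q_2^2+Q_1^2)+\tau\,i[Q_2,Q_1]$. By the composition and commutator rules of the calculus (Theorem~\ref{9}) together with the fact that multiplying by the parameter $\tau$ raises the symbol order by one, $R_\kappa$ lies in $\Psi_\tau^4$ with principal symbol exactly $\rho_\kappa=\kappa(q_2^2+q_1^2)+\tau\{q_2,q_1\}$; the $\tau$ in front of $i[Q_2,Q_1]$ is precisely what promotes the degree-$3$ bracket $\{q_2,q_1\}$ to a genuine order-$4$ symbol, so that it can compensate for the vanishing of $q_2^2+q_1^2$ on the characteristic set. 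Lemma~\ref{14} then gives, for $\kappa$ fixed large, $\rho_\kappa\ge C\langle\xi,\tau\rangle^4$ on $K\times\R^d$, so the Gårding inequality (Theorem~\ref{ASC8} with $m=4$) produces $C'>0$ and $\tau_*$ with $(R_\kappa v,v)_0\ge C'\|v\|_{\tau,2}^2$ for all $v\in\mathscr{C}_c^\infty(K)$ and $\tau\ge\tau_*$.

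It remains to combine the two. Since $(R_\kappa v,v)_0=\kappa(\|Q_2v\|_0^2+\|Q_1v\|_0^2)+\tau\,i([Q_2,Q_1]v,v)_0$, subtracting $\tau$ times the energy identity gives $(R_\kappa v,v)_0-\tau\|P_\varphi v\|_0^2=(\kappa-\tau)(\|Q_2v\|_0^2+\|Q_1v\|_0^2)$, which is $\le 0$ as soon as $\tau\ge\kappa$. Together with the Gårding bound this yields $C'\|v\|_{\tau,2}^2\le\tau\|P_\varphi v\|_0^2$ for $\tau\ge\max(\tau_*,\kappa)$, i.e. the reduced estimate $\tau^{-1}\|v\|_{\tau,2}^2\le C\|P_\varphi v\|_0^2$; undoing the substitution $v=e^{\tau\varphi}r$ returns \eqref{ASCE1}. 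The main obstacle is the symbolic bookkeeping of the previous paragraph — correctly identifying the order and principal symbol of $R_\kappa$ inside the $\tau$-dependent calculus, and arranging the Gårding constant $C'$ to match the constant of Lemma~\ref{14} while enlarging $\tau_*$ so that the term $(\kappa-\tau)(\|Q_2v\|_0^2+\|Q_1v\|_0^2)$ carries the favorable sign. The conjugation estimates of the second paragraph and the lower-order remainders from the calculus are routine and absorbed.
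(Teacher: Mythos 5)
Your proposal is correct and follows essentially the same route as the paper: conjugation to $P_\varphi=Q_2+iQ_1$, the energy identity for $\|P_\varphi v\|_0^2$, the operator $\kappa(Q_1^2+Q_2^2)+i\tau[Q_2,Q_1]$ whose principal symbol is handled by Lemma~\ref{14} and the G\aa rding inequality of Theorem~\ref{ASC8}, and the comparison for $\tau\ge\kappa$ (the paper's condition $\kappa\tau^{-1}\le 1$) before undoing the conjugation. The only cosmetic difference is that you perform the back-substitution estimates at the start rather than at the end, as in \eqref{17}--\eqref{18}.
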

This theorem is classical, and we can find in H\"ormander~\cite{hormander}. Here we give a proof to be complete.
\begin{proof}
Let's take $\rconj=e^{\tau\varphi}r $ then $Pr=f$ can be written as follow $P_{\varphi}\rconj=g=f\,e^{\tau\varphi}$. Since $g=Q_{2}\rconj+iQ_{1}\rconj$ and $Q_{1}$ and $Q_{2}$ are symmetric then we have
\begin{align}
\|g\|_{0}^{2}&=\|Q_{2}\rconj\|_{0}^{2}+\|Q_{1}\rconj\|_{0}^{2}+i(Q_{1}\rconj,Q_{2}\rconj)-i(Q_{2}\rconj,Q_{1}\rconj)\notag
\\
&=(Q_{2}Q_{2}\rconj,\rconj)+(Q_{1}Q_{1}\rconj,\rconj)+i(Q_{2}Q_{1}\rconj,\rconj)-i(Q_{1}Q_{2}\rconj,\rconj)\notag
\\
&=\Big(\left(Q_{1}^{2}+Q_{2}^{2}+i[Q_{2},Q_{1}]\right)\rconj,\rconj\Big).\label{15}
\end{align}
We fix $\kappa$ large enough such that the statement of Lemma \ref{14} is fulfilled, then for $\tau$ sufficiently large satisfying $\kappa\tau^{-1}\leq 1$ then from \eqref{15} we obtain
\begin{equation}\label{16}
\tau^{-1}\left(\big(\kappa(Q_{1}^{2}+Q_{2}^{2})+i\tau[Q_{2},Q_{1}]\big)\rconj,\rconj\right)\leq\|g\|_{0}^{2}.
\end{equation}
Since the principal symbol of $\displaystyle\kappa(Q_{1}^{2}+Q_{2}^{2})+i\tau[Q_{2},Q_{1}]$ is given by $\rho_{\kappa}(x,\xi,\tau)=\kappa(q_{2}^{2}+q_{1}^{2})+\tau\{q_{2},q_{1}\}$ then from Lemma \ref{14} we have $\rho_{\kappa}(x,\xi,\tau)\geq C\langle\xi,\tau\rangle^{4}$ therefore by G\aa rding inequality (Theorem \ref{ASC8}) it follows that
$$
\re\,\left(\big(\kappa(Q_{1}^{2}+Q_{2}^{2})+	i\tau[Q_{2},Q_{1}]\big)\rconj,\rconj\right)\geq\|\rconj\|_{\tau,2}^{2}.
$$
Combining this inequality with \eqref{16} we find
\begin{equation}
	\label{est: Caleman conj.}
\tau^{-1}\|\rconj\|_{\tau,2}^{2}\leq\|g\|_{0}^{2}.
\end{equation}
which reads
\begin{equation}\label{23}
\tau^{3}\|\rconj\|_{0}^{2}+\tau\|\nabla \rconj\|_{0}^{2}+\tau^{-1}\sum_{|\alpha|=2}\|D^{\alpha}\rconj\|_{0}^{2}\leq C\|e^{\tau\varphi}f\|_{0}^{2}.
\end{equation}
Since we have
\[
e^{\tau\varphi}D_{j}r=(D_{j}+i\tau\partial_{j}\varphi)\rconj,\qquad e^{\tau\varphi}D_{j}D_{k}r=(D_{j}+i\tau\partial_{j}\varphi)(D_{k}+i\tau\partial_{k}\varphi)\rconj,
\]
then we have
\begin{equation}\label{17}
\tau\|e^{\tau\varphi}\nabla r\|_{0}^{2}\leq C\left(\tau^{3}\|\rconj\|_{0}^{2}+\tau\|\nabla \rconj\|_{0}^{2}\right)
\end{equation}
and
\begin{equation}\label{18}
\tau^{-1}\sum_{|\alpha|=2}\|e^{\tau\varphi}D^{\alpha}r\|_{0}^{2}\leq C\left(\tau^{3}\|\rconj\|_{0}^{2}+\tau\|\nabla \rconj\|_{0}^{2}+\tau^{-1}\sum_{|\alpha|=2}\|D^{\alpha}\rconj\|_{0}^{2}\right).
\end{equation}
Thus, estimate \eqref{ASCE1} follows by replacing \eqref{17} and \eqref{18} into \eqref{23}. This concludes the proof.
\end{proof}
%
%
\begin{thm}
	\label{th: Carleman interior H-1}
Let $\Omega$ be an open bounded set of $\mathbb{R}^{d}$, let $K\Subset \Omega$ and $\varphi$ be a function that 
satisfies the  sub-ellipticity assumption in $K$. Then there exist $\tau_{*}>0$ and $C>0$ such that
\begin{equation*}
\tau^3\|e^{\tau\varphi} r\|_0^2 +\tau\|e^{\tau\varphi}\nabla r\|_0 ^2\leq C\tau^{2}\left(\|e^{\tau\varphi}g\|_{0}^{2}
+\|e^{\tau\varphi}f\|_{0}^{2}\right)
\end{equation*}
for all $r \in\mathscr{C}_{c}^{\infty}(K)$ which satisfies \eqref{ASCE2}, $\tau\geq\tau_{*}$ and $\mu$ 
satisfying \eqref{hyp: mu eq tau}.
\end{thm}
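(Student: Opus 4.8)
The plan is to deduce the estimate of Theorem~\ref{th: Carleman interior H-1} from the elliptic Carleman estimate of Theorem~\ref{21}. The starting point is the observation that $r$ satisfies $Pr = -\Delta r - \mu^2 r = i\mu g + \div f$, so that the source term lies in $H^{-1}$ rather than in $L^2$: the presence of the derivative $\div f$ is exactly what forces the loss expressed by the factor $\tau^2$ on the right-hand side. First I would introduce a cutoff $\chi\in\mathscr C_c^\infty(\Omega)$ equal to $1$ on a neighborhood of $K$ (this is harmless since $\supp r\subset K$, so $\chi r = r$ and all commutator terms $[P,\chi]r$ vanish), and work with $\rconj = e^{\tau\varphi}r$ as in the proof of Theorem~\ref{21}. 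The conjugated equation reads $P_\varphi \rconj = e^{\tau\varphi}(i\mu g + \div f)$, and the point is to estimate the right-hand side in the norm $\|\cdot\|_{0}$ after extracting one derivative.

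The key step is the treatment of $e^{\tau\varphi}\div f$. Writing $e^{\tau\varphi}\partial_j f_j = (\partial_j + \tau(\partial_j\varphi))(e^{\tau\varphi}f_j) - \tau(\partial_j\varphi)e^{\tau\varphi}f_j$, one sees that $e^{\tau\varphi}\div f = \sum_j(\partial_j + \tau(\partial_j\varphi))(e^{\tau\varphi}f_j) + (\text{lower order})$, a first-order operator in the conjugated variable applied to $e^{\tau\varphi}f$, with symbol of size $\langle\xi,\tau\rangle$. Hence $\|e^{\tau\varphi}\div f\|_{\tau,-1}\le C\|e^{\tau\varphi}f\|_0$ uniformly for $\tau\ge1$ (using Theorem~\ref{-1} with $\Lambda_\tau^{-1}$). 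Likewise $\|e^{\tau\varphi}(i\mu g)\|_{\tau,-1}\le C|\mu|\langle\xi,\tau\rangle^{-1}$-type bound gives $\le C\tau^{-1}|\mu|\,\|e^{\tau\varphi}g\|_0\le C\|e^{\tau\varphi}g\|_0$ by \eqref{hyp: mu eq tau}; actually one even gains, but the crude bound suffices. Therefore
\[
\|e^{\tau\varphi}Pr\|_{\tau,-1}\le C\big(\|e^{\tau\varphi}g\|_0 + \|e^{\tau\varphi}f\|_0\big).
\]
Now I would revisit the proof of Theorem~\ref{21}: the estimate \eqref{est: Caleman conj.} actually establishes $\tau^{-1}\|\rconj\|_{\tau,2}^2\le \|g\|_0^2 = \|P_\varphi\rconj\|_0^2$, but the same argument with $P_\varphi\rconj$ paired against $\Lambda_\tau^{-2}\rconj$ instead of $\rconj$ — equivalently, running the $Q_1,Q_2$ splitting one notch lower — yields the shifted version $\tau^{-1}\|\rconj\|_{\tau,1}^2\le C\|P_\varphi\rconj\|_{\tau,-1}^2$. (Alternatively one applies Theorem~\ref{21} to $\Lambda_\tau^{-1}\rconj$ and commutes $\Lambda_\tau^{-1}$ through $P_\varphi$, the commutator being of order $\le$ that of the main term, absorbable for $\tau$ large.) Combining, $\tau^{-1}\|\rconj\|_{\tau,1}^2\le C\big(\|e^{\tau\varphi}g\|_0^2+\|e^{\tau\varphi}f\|_0^2\big)$, i.e. $\tau^2\|\rconj\|_{\tau,1}^2\le C\tau^3\big(\cdots\big)$ — here I must be careful about the powers of $\tau$: the claimed inequality has $\tau^3\|e^{\tau\varphi}r\|_0^2+\tau\|e^{\tau\varphi}\nabla r\|_0^2$ on the left and $\tau^2(\cdots)$ on the right, so the clean bookkeeping is $\tau^{-1}\cdot\tau^2\|\rconj\|_{\tau,1}^2 = \tau\|\rconj\|_{\tau,1}^2 \gtrsim \tau^3\|\rconj\|_0^2 + \tau\|\nabla\rconj\|_0^2$, and multiplying the source estimate by $\tau^2$ gives precisely the right-hand side. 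Finally I translate back from $\rconj=e^{\tau\varphi}r$ to $r$ exactly as in \eqref{17}: $\tau\|e^{\tau\varphi}\nabla r\|_0^2\le C(\tau^3\|\rconj\|_0^2+\tau\|\nabla\rconj\|_0^2)$, which closes the estimate.

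The main obstacle is the bookkeeping of the Sobolev shift by one order: one must check that lowering the estimate of Theorem~\ref{21} by one derivative (from $\|\rconj\|_{\tau,2}$ controlled by $\|P_\varphi\rconj\|_0$ to $\|\rconj\|_{\tau,1}$ controlled by $\|P_\varphi\rconj\|_{\tau,-1}$) really goes through — the subtlety is that conjugating by $\Lambda_\tau^{-1}$ produces a commutator $[\Lambda_\tau^{-1},P_\varphi]$ of order $1$ acting on $\rconj$, whose contribution is $O(\|\rconj\|_{\tau,1})$ and hence formally the same size as the term one is trying to estimate; this is resolved by the usual large-$\tau$ absorption, using that the G\aa rding lower bound in Lemma~\ref{14} carries a full extra power of $\tau$ (the symbol $\rho_\kappa$ is bounded below by $C\langle\xi,\tau\rangle^4$, not merely $C\langle\xi,\tau\rangle^{4}/\tau$), giving the room to absorb. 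The remaining steps — the cutoff reduction, the identity for $e^{\tau\varphi}\div f$, and the final change of unknown — are routine.
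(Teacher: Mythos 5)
Your proposal is correct and follows essentially the same route as the paper: conjugate, regard $e^{\tau\varphi}\div f$ as a first-order operator of the calculus with parameter applied to $e^{\tau\varphi}f$ (so the source is measured in the $\|\cdot\|_{\tau,-1}$ norm), deduce a one-order-shifted version of estimate \eqref{est: Caleman conj.}, and return to $r$ as in \eqref{17}. The paper implements the shift exactly as in your parenthetical alternative, applying \eqref{est: Caleman conj.} to $\chi\Lambda_\tau^{-1}e^{\tau\varphi}r$ with a cutoff $\chi=1$ near $K$ --- the cutoff's real purpose being to restore compact support after the nonlocal $\Lambda_\tau^{-1}$ so that the G\aa rding-based estimate of Theorem~\ref{21} applies, not the vanishing of $[P,\chi]r$ --- and absorbs the resulting commutator remainder $\|e^{\tau\varphi}r\|_0$ for $\tau$ large; apart from this support detail and the harmless sign slip ($e^{\tau\varphi}\partial_j e^{-\tau\varphi}=\partial_j-\tau\partial_j\varphi$), your bookkeeping matches the paper's.
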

From this theorem we can deduce this corollary.
%
%
\begin{cor}
	\label{cor: Carleman interior H-1}
Let $\Omega$ be an open bounded set of $\mathbb{R}^{d}$, let $K\Subset \Omega$ and $\varphi$ be a function that 
satisfies the  sub-ellipticity assumption in $K$. Then there exist $\tau_{*}>0$ and $C>0$ such that
\begin{equation*}
\tau^3\|e^{\tau\varphi} u\|_0^2 +\tau^3\|e^{\tau\varphi} r\|_0^2 +\tau\|e^{\tau\varphi}\nabla r\|_0 ^2\leq C\tau^{2}\left(\|e^{\tau\varphi}g\|_{0}^{2}
+\|e^{\tau\varphi}f\|_{0}^{2}\right)
\end{equation*}
for all $r ,u \in\mathscr{C}_{c}^{\infty}(K)$ which satisfies \eqref{eq: reslov bd}, $\tau\geq\tau_{*}$ and $\mu$ 
satisfying \eqref{hyp: mu eq tau}.
\end{cor}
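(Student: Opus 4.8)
The plan is to obtain the extra term $\tau^3\|e^{\tau\varphi}u\|_0^2$ essentially for free from Theorem~\ref{th: Carleman interior H-1}, by using the first scalar equation in \eqref{eq: reslov bd} to recover $u$ from $f$ and $\nabla r$, and then exploiting the calibration \eqref{hyp: mu eq tau} between $\mu$ and $\tau$ to absorb the weight.

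First I would check that the hypotheses of Theorem~\ref{th: Carleman interior H-1} hold for $r$. If $(u,r)\in\mathscr{C}_c^\infty(K)$ solves \eqref{eq: reslov bd}, then taking the divergence of $-\nabla r+i\mu u=f$ and substituting $\div u=i\mu r-g$ from the second equation yields precisely \eqref{ASCE2}, that is $-\Delta r-\mu^2 r=i\mu g+\div f$. Hence Theorem~\ref{th: Carleman interior H-1} applies and gives
\[
\tau^3\|e^{\tau\varphi}r\|_0^2+\tau\|e^{\tau\varphi}\nabla r\|_0^2\le C\tau^2\bigl(\|e^{\tau\varphi}g\|_0^2+\|e^{\tau\varphi}f\|_0^2\bigr).
\]

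Next I would estimate $u$. From the first equation of \eqref{eq: reslov bd} one has $u=(i\mu)^{-1}(f+\nabla r)$, hence
\[
\|e^{\tau\varphi}u\|_0^2\le\frac{2}{|\mu|^2}\bigl(\|e^{\tau\varphi}f\|_0^2+\|e^{\tau\varphi}\nabla r\|_0^2\bigr),
\]
and since \eqref{hyp: mu eq tau} gives $|\mu|\ge c_0\tau$, multiplying by $\tau^3$ yields
\[
\tau^3\|e^{\tau\varphi}u\|_0^2\le\frac{2}{c_0^2}\bigl(\tau\|e^{\tau\varphi}f\|_0^2+\tau\|e^{\tau\varphi}\nabla r\|_0^2\bigr).
\]
The term $\tau\|e^{\tau\varphi}\nabla r\|_0^2$ is bounded by the right-hand side of the estimate of Theorem~\ref{th: Carleman interior H-1} just recalled, while $\tau\|e^{\tau\varphi}f\|_0^2\le\tau^2\|e^{\tau\varphi}f\|_0^2$ for $\tau\ge1$; therefore $\tau^3\|e^{\tau\varphi}u\|_0^2\le C'\tau^2(\|e^{\tau\varphi}g\|_0^2+\|e^{\tau\varphi}f\|_0^2)$.

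Finally I would add this last inequality to the one coming from Theorem~\ref{th: Carleman interior H-1}, keeping $\tau_*$ unchanged and enlarging $C$, which gives the claimed estimate. There is no genuine obstacle here: the only subtlety is that the weight $\tau^3$ attached to $\|e^{\tau\varphi}u\|_0^2$ is one power of $\tau$ higher than what $u=(i\mu)^{-1}(f+\nabla r)$ naively delivers, and this discrepancy is exactly what the lower bound $|\mu|^2\gtrsim\tau^2$ from \eqref{hyp: mu eq tau} is there to repair, after which the already-established control of $\tau\|e^{\tau\varphi}\nabla r\|_0^2$ closes the argument.
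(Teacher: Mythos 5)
Your proposal is correct and follows essentially the same route as the paper: invoke Theorem~\ref{th: Carleman interior H-1} for $r$ (which satisfies \eqref{ASCE2}), then recover $u=(i\mu)^{-1}(f+\nabla r)$ from the first equation of \eqref{eq: reslov bd} and use $|\mu|\gtrsim\tau$ from \eqref{hyp: mu eq tau} to convert $\tau^3\|e^{\tau\varphi}u\|_0^2$ into terms already controlled. The only difference is that you spell out the absorption of $\tau\|e^{\tau\varphi}\nabla r\|_0^2$ and $\tau\|e^{\tau\varphi}f\|_0^2$ explicitly, which the paper leaves implicit.
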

\begin{proof}
As $r$ satisfies \eqref{ASCE2}, with Theorem~\ref{th: Carleman interior H-1}, we only have to estimate $\tau^3\|e^{\tau\varphi} u\|_0^2 $. From the first equation of 
 \eqref{eq: reslov bd} we have $\tau^3\|e^{\tau\varphi} u\|_0^2\lesssim \tau^3\|e^{\tau\varphi} \mu^{-1}(f+\nabla r) \|_0^2$, which 
 gives the result using  \eqref{hyp: mu eq tau}.
\end{proof}

\begin{proof}[Proof of Theorem \ref{th: Carleman interior H-1}]
We set $P_{\varphi}=e^{\tau\varphi}Pe^{-\tau\varphi}$, $\rconj=e^{\tau\varphi}r$, $F=-e^{\tau\varphi}f$ and $G=i\mu e^{\tau\varphi}g+\tau e^{\tau\varphi}\nabla\varphi.f$. Then from \eqref{ASCE2} we have 
$$
P_{\varphi}\rconj=G+\div(F).
$$
Let $K_1$ be such that $K\Subset K_1\Subset \Omega$ and let $\chi\in \mathscr{C}_{c}^{\infty}(K_1)$ be such that $\chi =1$ on $K$.
Setting $\rconjmoins=\chi\Lambda_{\tau}^{-1}\rconj$ with $\Lambda_\tau=(\tau^2-\Delta)^{1/2}$ and we write
$$
P_{\varphi}\rconjmoins=\chi\Lambda_{\tau}^{-1}P_{\varphi}\rconj+[P_{\varphi},\chi\Lambda_{\tau}^{-1}]\rconj=\chi\Lambda_{\tau}^{-1}(G+\div(F))+[P_{\varphi},\chi\Lambda_{\tau}^{-1}]\rconj,
$$
then we find
\begin{equation}\label{ASCE3}
\|P_{\varphi}\rconjmoins\|_{0}\leq C\left(\tau^{-1}\|G\|_{0}+\|F\|_{0}+\|\rconj\|_{0}\right).
\end{equation}
Applying  Estimate~\eqref{est: Caleman conj.} in the proof of Theorem~\ref{21} to $\rconjmoins$ then we obtain
$$
\tau^{-\frac{1}{2}}\|\rconjmoins\|_{\tau,2}\leq C\|P_{\varphi}\rconjmoins\|_{0}
$$
We have $\rconjmoins=\Lambda_\tau^{-1} \rconj+[\chi ,  \Lambda_\tau^{-1}  ]\rconj$ then 
$\|\rconj \|_{\tau,1}   \lesssim   \|\rconjmoins\|_{\tau,2}  +  \| \rconj \|_0  $.
That together with \eqref{ASCE3} 
$$
\tau^{-\frac{1}{2}}\|\rconj\|_{\tau,1}\leq C\left(\tau^{-1}\|G\|_{0}+\|F\|_{0}+\|\rconj\|_{0}\right).
$$
Multiplying by $\tau$ which is chosen sufficiently large then we obtain
$$
\tau^{\frac{1}{2}}\|\rconj\|_{\tau,1}\leq C\left(\|G\|_{0}+\tau\|F\|_{0}\right)\leq C\tau\left(\|e^{\tau\varphi}g\|_{0}+\|e^{\tau\varphi}f\|_{0}\right).
$$
As $ \|\rconj\|_{\tau,1} $ is equivalent to  $\tau\|e^{\tau\varphi} r\|_0 +\|e^{\tau\varphi}\nabla r\|_0 $ 
arguing as in the proof of \eqref{17} we obtain the result of the theorem.
\end{proof}
\subsection{Local Carleman estimate at the boundary}

In this section because the boundary, we use a tangential pseudo-differential calculus. This calculus is completely analogous 
to the one presented in Section~\ref{sec: Pseudo-differential operators} except that  a function $a(x',x_d,\xi')$ is a symbol 
in $(x',\xi')$  in the sense of Definition~\ref{def: pseudo} where  
$x_d$ is a parameter and the estimates given in Definition~\ref{def: pseudo} are  uniform 
with respect $x_d$. To avoid confusion we denote by $\Ssct^m$ the class of tangential symbol of order 
$m$, $\Opt (a)$ the operator associated with the symbol $a\in\Ssct^m$.
The class of  operators associated with  symbols in $\Ssct^m$ is denoted by $\Psi_{T,\tau}^{m}$.
We refer to \cite{LLR:2020} for details on these symbols and operators.
We consider functions in a half space $\R^{d-1}\times (0,+\infty)=\R^d_+ $, and we denote by
$\| .\|_+=\| .\|_{L^2(\R^d_+)}$ the $L^2$-norm and  $(.,.)_+=(.,.)_{L^2(\R^d_+)}$ the associated inner product. 
At the boundary $x_d=0$ we denote the $L^2$-norm  by $|g|^2=\int_{\R^{d-1}}|g(x')|^2\ud x'$ and the inner product 
associated by $(.,.)_\pd=(.,.)_{L^2(\R^{d-1})}$. A set $W=\omega\times\Gamma$ in $\R^d \times \R^{d-1} \times \R^+$ 
is called a conic open set, 
if there exist $\omega$ an open set in $\R^d$, and $\Gamma $ an open set in $\R^{d-1} \times \R^+$ 
such that for all $(\xi',\tau)\in\Gamma$ and 
$\lambda>0$ then $(\lambda\xi',\lambda\tau)\in\Gamma$. For $s\in\R$ we denote by $\Lsct^{s}$ the tangential operator defined by $\Lsct^{s}=\Opt(\langle\xi',\tau\rangle^{s})$.

We recall the following  microlocal G\aa rding inequality obtained, for instance, by applying sharp G\aa rding inequality.
\begin{thm}[Microlocal G{\aa}rding inequality]
  \label{thm: microlocal Garding}
  Let $K$ be a compact set of $\R^d$ and let $W$ be a conic open set
  of $\R^d \times \R^{d-1} \times \R^+$ contained in
  $K \times \R^{d-1}  \times \R^+$.  Let also $\chi \in \Ssct^0$ be
  homogeneous of order $0$ (for $\left\langle \xi',\tau\right\rangle\geq 1$) and be such that
  $\supp(\chi) \subset W$.

  Let $a(x,\xi',\tau) \in \Ssct^m$, with principal part $a_m$ homogeneous of order $m$. If there exist
  $C_0>0$ and $R>0$ such that
  \begin{equation*}
    \Re a_m(x,\xi',\tau) \geq C_0     \left\langle \xi',\tau\right\rangle^m, \quad (x,\xi',\tau) \in W, 
    \ \tau \in [1,+\infty), \quad    \left\langle \xi',\tau\right\rangle \geq R,
    \end{equation*}
  then
  for any $0<C_1<C_0$, $N \in \N$, there exist $C_N$ and $\tau_\ast\geq 1$
  such that 
  \begin{equation*}
    \Re \big({\Opt(a) \Opt(\chi) u},{ \Opt(\chi) u}\big)_+ \geq C_1 \| \Lsct^{m/2}\Opt(\chi)
      u\|_+ ^2 -C_N \|\Lsct^{-N}u\|_+ ^2, 
  \end{equation*}
for $u \in \SS(\R^d)$ and $\tau \geq \tau_\ast$.
\end{thm}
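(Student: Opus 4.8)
The plan is to reduce the inequality to the sharp G{\aa}rding inequality (in the tangential calculus) applied to a \emph{globally elliptic} symbol that coincides with $a$ near $\supp(\chi)$. The heuristic is that, although $a_m$ is elliptic only on $W$, the function $\Opt(\chi)u$ is, up to a smoothing operator applied to $u$, microlocally carried by $\supp(\chi)\subset W$; so in the pairing one may replace $\Opt(a)$ there by any operator whose symbol is globally elliptic and agrees with $a$ on a conic neighbourhood of $\supp(\chi)$.

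First I would fix constants $C_1<C_1'<C_0'<C_0$ and choose conic cutoffs $\zeta,\chi_1\in\Ssct^0$, homogeneous of degree $0$ for $\left\langle\xi',\tau\right\rangle\ge1$ and supported in $K$ in the variable $x$, with $\chi_1\equiv1$ on a conic neighbourhood of $\supp(\chi)$, $\zeta\equiv1$ on a conic neighbourhood of $\supp(\chi_1)$, and $\supp(\zeta)\subset W$. Set
\[
\tilde a:=\zeta\,a+(1-\zeta)\,C_0'\left\langle\xi',\tau\right\rangle^{m}\in\Ssct^{m}.
\]
On $\supp(\zeta)\subset W$ the hypothesis gives $\Re a_m\ge C_0\left\langle\xi',\tau\right\rangle^{m}$ for $\left\langle\xi',\tau\right\rangle\ge R$, while $\zeta=0$ off $\supp(\zeta)$, so the principal part $\tilde a_m$ satisfies $\Re\tilde a_m\ge C_0'\left\langle\xi',\tau\right\rangle^{m}$ for $\left\langle\xi',\tau\right\rangle\ge R$, hence for all $\xi'$ once $\tau\ge R$ (because $\left\langle\xi',\tau\right\rangle\ge\tau$); moreover $\tilde a=a$ on a conic neighbourhood of $\supp(\chi_1)$. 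Using the tangential version of the composition theorem (Theorem~\ref{9}): since $\chi_1\equiv1$ near $\supp(\chi)$, every term of positive order in the expansion of the symbol of $\Opt(\chi_1)\Opt(\chi)$ is a $\xi'$-derivative of $\chi_1$ times an $x'$-derivative of $\chi$ and therefore vanishes identically, so $\Opt(\chi_1)\Opt(\chi)=\Opt(\chi)+\Opt(e_0)$ with $e_0\in\Ssct^{-\infty}$; likewise, $a-\tilde a$ vanishing near $\supp(\chi_1)$ gives $\Opt(a)\Opt(\chi_1)=\Opt(\tilde a)\Opt(\chi_1)+\Opt(e_1)$ with $e_1\in\Ssct^{-\infty}$. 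Composing these two identities yields
\[
\Opt(a)\Opt(\chi)=\Opt(\tilde a)\Opt(\chi)+\Opt(\rho),\qquad\rho\in\Ssct^{-\infty},
\]
where $\Opt(\rho)$ is a finite sum of products of an operator of finite order with one of order $-\infty$.

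It remains to estimate the two contributions, writing $v:=\Opt(\chi)u$. For the main term $\tilde a$ is globally elliptic; splitting $\tilde a=\big(\tilde a-C_1'\left\langle\xi',\tau\right\rangle^{m}\big)+C_1'\left\langle\xi',\tau\right\rangle^{m}$, the first summand has nonnegative real principal part for $\tau\ge R$, so the sharp G{\aa}rding inequality gives $\Re(\Opt(\tilde a)v,v)_+\ge C_1'\|\Lsct^{m/2}v\|_+^{2}-C\|\Lsct^{(m-1)/2}v\|_+^{2}$, and since $\|\Lsct^{(m-1)/2}v\|_+^{2}\le\tau^{-1}\|\Lsct^{m/2}v\|_+^{2}$ (again because $\left\langle\xi',\tau\right\rangle\ge\tau$) this is $\ge C_1''\|\Lsct^{m/2}v\|_+^{2}$ for $\tau$ large enough, with some $C_1<C_1''<C_1'$. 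For the remainder, $\Lsct^{m/2}$ being an exact self-adjoint Fourier multiplier in $x'$ one writes $(\Opt(\rho)u,v)_+=(\Lsct^{-m/2}\Opt(\rho)u,\Lsct^{m/2}v)_+$, and since $\Opt(\rho)$ has order $-\infty$ one has $\Lsct^{-m/2}\Opt(\rho)=\Opt(c)\Lsct^{-N}$ with $\Opt(c)$ of order $-\infty$, hence $L^{2}$-bounded uniformly in $\tau\ge1$ (Theorem~\ref{-1}); Cauchy--Schwarz and Young's inequality then give $|(\Opt(\rho)u,v)_+|\le\frac{\e}{2}\|\Lsct^{m/2}v\|_+^{2}+C_{N,\e}\|\Lsct^{-N}u\|_+^{2}$ for every $\e>0$. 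Choosing $\e=2(C_1''-C_1)$, collecting the estimates, and taking $\tau_\ast\ge R$ large enough to validate all the preceding steps yields the asserted inequality with $C_N:=C_{N,\e}$.

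The only genuinely nontrivial step is the construction of the globally elliptic companion $\tilde a$ together with the nested conic cutoffs, and the verification that passing from $\Opt(a)\Opt(\chi)$ to $\Opt(\tilde a)\Opt(\chi)$ costs only a smoothing operator — this is exactly where the hypotheses $\supp(\chi)\subset W$ and the conic structure of $W$ are used. Everything else is a routine application of the sharp G{\aa}rding inequality and of $L^{2}$-continuity in the tangential pseudo-differential calculus (Theorems~\ref{-1} and~\ref{9}).
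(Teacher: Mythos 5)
Your argument is correct and follows exactly the route the paper indicates: the paper states Theorem~\ref{thm: microlocal Garding} without proof, noting only that it is ``obtained, for instance, by applying sharp G{\aa}rding inequality,'' and your construction of the globally elliptic companion symbol $\tilde a$ via nested conic cutoffs, followed by sharp G{\aa}rding and absorption of the $\Ssct^{-\infty}$ remainders into the $\|\Lsct^{-N}u\|_+$ term, is the standard implementation of that reduction. The details (nested cutoffs so that the composition errors are smoothing, the $\tau\geq R$ observation so that ellipticity holds for all $\xi'$, and the absorption of the $(m-1)/2$-norm via $\langle\xi',\tau\rangle\geq\tau$) are all handled correctly.
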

As we want to change the variables in order to have a flat boundary which is convenient to do the computations, we use 
the language and usual tools of Riemannian geometry.  In this framework  the gradient and divergence operators 
keep   forms we can follow after a change of variables. Our purpose is to use these tools locally and we do not use manifold 
tools as charts, atlas and etc. 
To fix the notation, let $V$ be an open set in $\R^d$. 
Let $g(x)=\big(g_{ij}(x)\big)_{1\le i,j\le d}$ be a positive symmetric matrix called the metric, we denote
by $g^{-1}(x)=\big(g^{ij}(x)\big)_{1\le i,j\le d}$ the inverse of $g(x)$. For a smooth function $r$, we denote by
$(\nablag r(x))^i= \sum_{1\le j\le d}g^{ij}(x)\pd_{x_j}r(x)$ the gradient of $r$. We have $\nablag r(x)\in T_xV$, this means that $\nablag r$ is a 
tangent vector field. 

For $u(x)=\big( u^1(x),\ldots,u^d(x)  \big)$ a smooth  tangent vector field, we define the divergence operator by
\[\div_g u(x)=\big( \det g(x)\big)^{-1/2}  \sum_{1\le j\le d}\pd_{x_j}\Big(\big(\det g(x)\big)^{1/2} u^j(x)\Big).\]
For a smooth function $r$ and a smooth tangent vector field $u$, we have
\begin{equation}
	\label{eq: div product}
\div_g(ru)=r\div_g u+g(\nablag r,u), \text{ where }g(\nablag r,u)=\sum_{1\le i,j\le d} g_{ij}(\nablag r)^iu^j.
\end{equation}
For two smooth functions $r_1$ and $r_2$ we have 
\begin{equation}
	\label{eq: grad product}
\nablag(r_1r_2)=r_1\nablag r_2+r_2\nablag r_1.
\end{equation}
It is well-known that there exist coordinates (called \emph{normal geodesic coordinates}) such that 
the boundary is defined locally by $x_d=0$, the open set $\Omega\cap V$ is defined by $x_d>0$, 
the metric $g$ is such that $g_{id}=g_{di}=0$ for $i=1,\ldots, d-1$ and, $g_{dd}=1$. We denote by 
$\tilde g=(g_{ij})_{1\le i,j\le d-1}$ the metric $g$ on $x_d$ fixed.

We can define on each manifold $x_d= const$ the gradient and divergence operators associated with $\tilde g$ and 
for $r$ a smooth function and $\tilde u=(u^1,\ldots,u^{d-1}) $ a smooth vector field on $x_d= const$, we have
\begin{align*}
& \ (\nablatg r)^i= \sum_{1\le j\le d-1}g^{ij}\pd_{x_j}r \text{ for } i=1,\ldots,d-1,
&\div_{\tilde g}\tilde u=  ( \det \tilde g)^{-1/2}  \sum_{1\le j\le d-1}\pd_{x_j}\Big(( \det \tilde g)^{1/2} u^j\Big) .
\end{align*}
In such coordinates, we have $\det g=\det \tilde g$. The gradient and divergence operators take the following form.
\begin{align}
	\label{def: div grad}
\nablag r=(\nablatg r,\pd_{x_d}r), \  
\div_g u= \div_{\tilde g}\tilde u +\pd_{x_d}u^d+ hu^d, \text{ where }  
h=( \det \tilde g)^{-1/2}  \pd_{x_d}( \det \tilde g)^{1/2}.   
\end{align}
We recall that the equation of the resolvent problem $({\mathcal A}_0 +i\mu )(u,r)=(f,g)$   
locally  takes the form
\begin{equation}
\begin{cases}
	\label{eq: reslov bd 1}
&-\nablag r+ i\mu u=f \text{ in } x_d>0, \\
&-\div_g u+i\mu r=g  \text{ in } x_d>0,\\
&u^d=0  \text{ on } x_d=0.
\end{cases}
\end{equation}
We have the following theorem
 %
 %
 \begin{thm}
 	\label{th: Carleman bord}
Let $x_0\in \R^{d-1}\times \{  0 \}$, we assume there exist a neighborhood of $x_0$ where 
$\varphi$ satisfies \eqref{sub-ellipticity} the sub-ellipticity condition and  $\pd_{x_d} \varphi(x_0)>0$.  Then there exist $ V_0$ be  
an open set of $\R^d$ such that $x_0\in V_0$, $C>0$,   and $ \tau_*>0$ such that
 \begin{equation*}
\tau^{1/2}	| e^{\tau \varphi}r_{|x_d=0}|
	+  \tau^{1/2}  \| e^{\tau \varphi}u\|_+
	+ \tau^{1/2}  \| e^{\tau \varphi} r\|_+   
	+ \tau^{-1/2} \| e^{\tau \varphi} \nablag r\|_+ 
	 \le C\big( 
 \| e^{\tau \varphi}  f\|_+
+ \| e^{\tau \varphi}g\|_+    
\big),
\end{equation*}
for $u,r\in\Con^\infty(\R^d)$ supported on $V_0$, satisfying \eqref{eq: reslov bd 1}, for every $\tau\ge\tau_*$ and $\mu $ satisfying 
\eqref{hyp: mu eq tau}.
 \end{thm}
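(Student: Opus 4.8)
\emph{Overall plan.} The idea is to reduce the first‑order system \eqref{eq: reslov bd 1} to a scalar Helmholtz‑type equation for $r$ carrying a Neumann‑type boundary relation coming from $u\cdot n=0$, then to run the commutator argument of Theorems~\ref{21}--\ref{th: Carleman interior H-1} up to the boundary with the tangential calculus $\Opt,\Lsct$ and the microlocal Gårding inequality of Theorem~\ref{thm: microlocal Garding}, and finally to recover $u$. \emph{Reduction.} Applying $\div_g$ to the first line of \eqref{eq: reslov bd 1} and inserting $\div_g u=i\mu r-g$ gives $Pr:=-\Delta_g r-\mu^2 r=i\mu g+\div_g f$ in $\{x_d>0\}$, while the $d$‑th component of that same line reads $-\partial_{x_d}r+i\mu u^d=f^d$, so $u^d_{|x_d=0}=0$ produces the \emph{exact} relation $(\partial_{x_d}r+f^d)_{|x_d=0}=0$; this is the only place where $u\cdot n=0$ enters, and its being an identity rather than a nonzero datum is precisely what keeps all boundary traces of the source off the final right‑hand side. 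Conversely, once $r,\nablag r$ are controlled, the first line gives $u=(i\mu)^{-1}(f+\nablag r)$ and, by \eqref{hyp: mu eq tau}, $\tau^{1/2}\|e^{\tau\varphi}u\|_+\le C\tau^{-1/2}(\|e^{\tau\varphi}f\|_++\|e^{\tau\varphi}\nablag r\|_+)$, so it suffices to bound the three $r$‑terms.

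\emph{Conjugation and the source.} Set $\rconj=e^{\tau\varphi}r$, $P_\varphi=e^{\tau\varphi}Pe^{-\tau\varphi}$. Commuting $e^{\tau\varphi}$ through $\div_g$ via \eqref{eq: div product} turns the equation into $P_\varphi\rconj=G+\div_g F$ with $F=e^{\tau\varphi}f$, $\|G\|_+\le C\tau(\|e^{\tau\varphi}g\|_++\|e^{\tau\varphi}f\|_+)$, and the boundary relation into $(\partial_{x_d}\rconj-\tau(\partial_{x_d}\varphi)\rconj+F^d)_{|x_d=0}=0$. Because the source is in divergence form I regularize tangentially, mimicking the $\Lambda_\tau^{-1}$ step in the proof of Theorem~\ref{th: Carleman interior H-1}: with $\chi\in\Con^\infty$ equal to $1$ near $x_0$ and supported where $\partial_{x_d}\varphi>0$, put $\rconjmoins=\chi\Lsct^{-1}\rconj$. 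The tangential pieces $\Lsct^{-1}\div_{\tilde g}F'$ and $\Lsct^{-1}(hF^d)$ are $O(\|F\|_+)$ in $L^2(\R^d_+)$; the non‑tangential piece $\Lsct^{-1}\partial_{x_d}F^d=\partial_{x_d}(\Lsct^{-1}F^d)$ carries a single $x_d$‑derivative that is transferred onto the multiplier by integration by parts in the Carleman identity below, both its bulk and its boundary contributions being controlled once one observes that, since $f^d=-\partial_{x_d}r+i\mu u^d$ and $u^d_{|x_d=0}=0$, $(\Lsct^{-1}F^d)_{|x_d=0}$ coincides modulo lower order with $-\partial_{x_d}\rconjmoins_{|x_d=0}+\tau(\partial_{x_d}\varphi)\rconjmoins_{|x_d=0}$, a quantity the estimate itself delivers. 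The commutator $[P_\varphi,\chi\Lsct^{-1}]$ contributes, as in \eqref{ASCE3}, only terms $\lesssim\|\rconj\|_{\tau,1}+\|\partial_{x_d}\rconj\|_+$, absorbed for $\tau$ large.

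\emph{Boundary Carleman estimate for $P_\varphi$.} With $P_\varphi=Q_2+iQ_1$ and $Q_1,Q_2$ formally self‑adjoint, Green's formula on $\{x_d>0\}$ gives the identity
\[
\|P_\varphi\rconjmoins\|_+^2=\big((Q_1^2+Q_2^2+i[Q_2,Q_1])\rconjmoins,\rconjmoins\big)_++\mathcal B(\rconjmoins),
\]
$\mathcal B$ being the boundary quadratic form in $\rconjmoins_{|x_d=0}$ and $\partial_{x_d}\rconjmoins_{|x_d=0}$ from the second‑order part of $Q_2$. The bulk term is handled just as in the proof of Theorem~\ref{21}: the $\kappa$‑trick reduces everything to $\rho_\kappa\ge C\langle\xi,\tau\rangle^4$, which is the sub‑ellipticity condition \eqref{sub-ellipticity} (Lemma~\ref{14}), and since near the boundary the argument has to be microlocalized in the tangential variables one concludes with Theorem~\ref{thm: microlocal Garding}, up to an $O(\|\Lsct^{-N}\rconj\|_+)$ remainder; second‑$x_d$‑derivative control of $\rconjmoins$ is then recovered from the equation, whose source is, after the tangential regularization, effectively in $L^2$. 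For $\mathcal B(\rconjmoins)$ one substitutes $\partial_{x_d}\rconjmoins_{|x_d=0}=\tau(\partial_{x_d}\varphi)\rconjmoins_{|x_d=0}+O(\text{l.o.t.})$ from the boundary relation; using $u^d_{|x_d=0}=0$ the source trace drops out, $\mathcal B$ reduces to a quadratic form in $\rconjmoins_{|x_d=0}$ and $\partial_{x_d}\rconjmoins_{|x_d=0}$ only, its diagonal $\rconjmoins_{|x_d=0}$‑part is $\ge c\tau|\Lsct^{1}\rconjmoins_{|x_d=0}|^2$ because $\partial_{x_d}\varphi>0$ on $V_0$ and is kept on the left, and the $\partial_{x_d}\rconjmoins_{|x_d=0}$‑terms are absorbed by the bulk after bounding $|\partial_{x_d}\rconjmoins_{|x_d=0}|$ by a one‑dimensional trace inequality together with the $\|\partial_{x_d}^2\rconjmoins\|_+$‑bound just obtained. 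This boundary bookkeeping — exhibiting at once the good sign from $\partial_{x_d}\varphi>0$ and the cancellation of the data traces made possible by $u\cdot n=0$, so that no trace of $f$ or $g$ survives on the right — is the step I expect to be the main obstacle; shrinking $V_0$ around $x_0$ keeps $\partial_{x_d}\varphi$ bounded below and the off‑diagonal boundary terms small.

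\emph{Conclusion.} Combining the last two steps, passing from $\rconjmoins=\chi\Lsct^{-1}\rconj$ back to $\rconj$ as in the proof of Theorem~\ref{th: Carleman interior H-1}, multiplying by a suitable power of $\tau$ and absorbing the commutator and $O(\|\Lsct^{-N}\rconj\|_+)$ remainders for $\tau\ge\tau_*$, one obtains
\[
\tau^{1/2}\|e^{\tau\varphi}r\|_++\tau^{-1/2}\|e^{\tau\varphi}\nablag r\|_++\tau^{1/2}|e^{\tau\varphi}r_{|x_d=0}|\le C\big(\|e^{\tau\varphi}f\|_++\|e^{\tau\varphi}g\|_+\big),
\]
the $\tau^{-1}$ gain of the tangential regularization compensating the $\tau$ in $\|G\|_+$ just as in Theorem~\ref{th: Carleman interior H-1}, and using $\|\rconj\|_{\tau,1}+\|\partial_{x_d}\rconj\|_+\simeq\tau\|e^{\tau\varphi}r\|_++\|e^{\tau\varphi}\nablag r\|_+$ (as for \eqref{17}, with $\nabla$ replaced by $\nablag$). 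Together with the bound on $\|e^{\tau\varphi}u\|_+$ from the Reduction step, this is the asserted estimate.
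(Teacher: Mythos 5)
Your reduction to the scalar equation $-\Delta_g r-\mu^2 r=i\mu g+\div_g f$ with the boundary relation $\pd_{x_d}r_{|x_d=0}=-f^d_{|x_d=0}$ is where the argument breaks, in two ways. First, this is an \emph{inhomogeneous} Neumann condition whose datum is a boundary trace of $f$, and such a trace is not controlled by $\|e^{\tau\varphi}f\|_+$; your proposed cancellation is circular, since you rewrite $(\Lsct^{-1}F^d)_{|x_d=0}$ through the trace of $\pd_{x_d}$ of the conjugated unknown, a quantity that the estimate you are proving does not contain (and, as explained below, cannot deliver uniformly in the relevant frequency region). The paper's proof is organized precisely so that no trace of the data ever appears: it keeps the first-order system, reduces it to the $2\times2$ system \eqref{eq: 2x2} in $(\rconj,v^d)$, uses the exact homogeneous condition $v^d_{|x_d=0}=0$ only through $z_1+z_2=0$ at $x_d=0$, and $f$ enters only through interior norms such as $\|\Lsct^{s}\mu^{-1}\Opt(\delta)\tilde F\|_+$.

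Second, and more fundamentally, the positivity you claim for the boundary quadratic form is false on a whole conic region of tangential frequencies, so a single integration-by-parts argument cannot close. Already in the model case $\nabla'\varphi=0$, writing $\rconj$ for the conjugated unknown (or its tangential regularization), the boundary contribution of $2\Re(Q_2\rconj,iQ_1\rconj)_+$ is, to principal order,
\begin{equation*}
2\tau\,\pd_{x_d}\varphi\Big(|\pd_{x_d}\rconj|^2-\big(|\xi'|^2-\mu^2-\tau^2(\pd_{x_d}\varphi)^2\big)|\rconj|^2\Big)\Big|_{x_d=0},
\end{equation*}
and after substituting the (homogeneous part of the) Neumann relation $\pd_{x_d}\rconj_{|x_d=0}=\tau(\pd_{x_d}\varphi)\rconj_{|x_d=0}$ it becomes $2\tau\,\pd_{x_d}\varphi\,\big(2\tau^2(\pd_{x_d}\varphi)^2+\mu^2-|\xi'|^2\big)|\rconj_{|x_d=0}|^2$, which is \emph{negative}, of size $\tau|\xi'|^2|\rconj_{|x_d=0}|^2$, as soon as $|\xi'|^2\gtrsim\tau^2+\mu^2$. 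Shrinking $V_0$ cannot remove this region, since it is a region in frequency, not in space; hence the inequality $\mathcal B\ge c\,\tau|\Lsct^{1}\rconj_{|x_d=0}|^2$ on which your boundary bookkeeping rests is simply not available, and this negative boundary term is dominated by nothing on the left-hand side. This is exactly why the paper performs the microlocal case analysis of Lemma~\ref{lem: Carleman microloc} (in the spirit of Lebeau--Robbiano for non-Dirichlet boundary problems): the bad region above is the case $|\Re\alpha|>\tau|\pd_{x_d}\varphi|$, treated by estimating the two first-order factors $z_1,z_2$ separately, the trace of $z_1$ being placed on the right and recovered via $z_1+z_2=0$; the glancing case $\Re\alpha=\tau\pd_{x_d}\varphi$ uses the sub-ellipticity bracket \eqref{positivity bracket}; and the double-root case $\alpha=0$ requires the separate multiplier argument at the end of Section~\ref{section3}. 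These steps are not refinements of your computation but its missing core, so the proposal as written has a genuine gap.
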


Let $v=e^{\tau \varphi}u$ and $ \rconj =e^{\tau \varphi} r$.  
We have from \eqref{eq: div product} and \eqref{eq: grad product}
\begin{align*}
&\nablag r= e^{-\tau \varphi} \big( \nablag \rconj -\tau \rconj \nablag\varphi    \big),\\
&\div_gu=e^{-\tau \varphi}\big( \div_g v-\tau g(\nablag \varphi, v) \big).
\end{align*}
Then System~\eqref{eq: reslov bd 1} takes the form
\begin{equation}
\begin{cases}
	\label{eq: reslov bd 2}
&- \nablag \rconj +\tau \rconj \nablag\varphi  + i\mu v=F \text{ in } x_d>0, \\
&-\div_g v+\tau g(\nablag \varphi, v) +i\mu \rconj=G  \text{ in } x_d>0,\\
&v^d=0  \text{ on } x_d=0,
\end{cases}
\end{equation}
where $F= e^{\tau \varphi}f$ and $G=e^{\tau \varphi}g$.

In the following, we denote by $\tilde F= (F^1,\ldots, F^{d-1})$ and by $\tilde v=(v^1,\ldots, v^{d-1})$, then we have $F=(\tilde F,F^d)$ and $v=(\tilde v, v^d)$. Multiplying  \eqref{eq: reslov bd 2}   by $i$, we have
\begin{equation}
\begin{cases}
	\label{eq: reslov bd 3}
&- i\nablatg \rconj +i\tau \rconj \nablatg\varphi  -\mu \tilde v=i\tilde F \text{ in } x_d>0, \\
& -i\pd_{x_d}\rconj +i\tau\rconj \pd_{x_d}\varphi-\mu v^d=iF^d  \text{ in } x_d>0, \\
&-i\div_g v+i\tau \tilde g(\nablatg \varphi, \tilde v)+i\tau v^d\pd_{x_d}\varphi -\mu \rconj=iG  \text{ in } x_d>0,\\
&v^d=0  \text{ on } x_d=0.
\end{cases}
\end{equation}
For this system we prove the following Carleman estimate.
 %
 %
 \begin{prop}
 	\label{prop: carleman loc}
	Let $x_0\in \R^{d-1}\times \{  0 \}$, we assume there exist a neighborhood of $x_0$ where 
$\varphi$ satisfies \eqref{sub-ellipticity} the sub-ellipticity condition and  $\pd_{x_d} \varphi(x_0)>0$.  For $s\in\R$,  there exist $ V_0$ be  an open set 
such that $x_0\in V_0$, $C>0$,   and $ \tau_*>0$ such that
 \begin{equation*}
	|\Lsct^{s+1/2}\rconj_{|x_d=0}|
	+ \tau^{1/2}\| \Lsct^{s}v\|_+
	+ \tau^{-1/2}\| \Lsct^{s+1} \rconj\|_+   
	 \le C\big( 
 \|  \Lsct^{s}  F\|_+
+ \|  \Lsct^{s}  G\|_+    
\big),
\end{equation*}
for $v, \rconj \in \SS(\R^d)$ satisfying \eqref{eq: reslov bd 3}, supported in $V_0$, for every 
$\tau\ge \tau_*$ and $\mu $ satisfying 
\eqref{hyp: mu eq tau}.
 \end{prop}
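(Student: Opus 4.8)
The plan is to reduce the first--order system \eqref{eq: reslov bd 3} to a scalar second--order equation for $\rconj$ together with a Neumann--type boundary condition, to establish a boundary Carleman estimate for that equation by the integration--by--parts technique of Section~\ref{subsec: away bd} adapted to the half--space, and then to recover $v$ from the first--order relations. Shrinking the neighbourhood of $x_0$, fix an open set $V_0\ni x_0$ on which $\varphi$ satisfies the sub--ellipticity condition \eqref{sub-ellipticity} with a uniform constant and on which $\pd_{x_d}\varphi\ge c_1>0$; by \eqref{hyp: mu eq tau}, $|\mu|$ is comparable to $\tau$, so $\mu\neq 0$ and we may divide by it. It suffices to treat $s=0$ and then repeat the argument with $\Lsct^{s}$--weights, absorbing the ensuing commutators; so we take $s=0$ below.

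\emph{Step 1: reduction.} From the first line of \eqref{eq: reslov bd 3}, $i\mu\tilde v=\tilde F+\nablatg\rconj-\tau\rconj\,\nablatg\varphi$; from the second line, $i\mu v^d=F^d+\pd_{x_d}\rconj-\tau\rconj\,\pd_{x_d}\varphi$, and evaluating the latter at $x_d=0$, where $v^d=0$, gives the boundary condition $\pd_{x_d}\rconj_{|x_d=0}=\tau(\pd_{x_d}\varphi)\rconj_{|x_d=0}-F^d_{|x_d=0}$. Inserting these expressions into the third line of \eqref{eq: reslov bd 3}, using \eqref{def: div grad}, \eqref{eq: div product} and \eqref{eq: grad product} to expand $\div_g v=\div_{\tilde g}\tilde v+\pd_{x_d}v^d+hv^d$, and multiplying by $\mu$, one obtains
\[
P_\varphi\rconj=\mathcal G\quad\text{in }x_d>0,
\]
where $P_\varphi=e^{\tau\varphi}(-\mu^2-\Delta_g)e^{-\tau\varphi}$ is the conjugated operator of Section~\ref{subsec: away bd} (now with the metric Laplacian), and $\mathcal G$ is a linear combination of $\mu G$, one tangential derivative of $\tilde F$, the normal derivative $\pd_{x_d}F^d$, a zeroth--order multiple of $F$, and terms of order $\le 1$ in $\rconj$; the last ones are harmless for $\tau$ large.

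\emph{Step 2: boundary Carleman estimate for the scalar problem.} This is the core. Working in the half--space with the tangential calculus of Section~\ref{subsec: away bd}, one runs the energy estimate for $P_\varphi$ in the spirit of the proof of Theorem~\ref{21}, but now keeping all the boundary terms produced by integrations by parts in $x_d$: with $\kappa$ chosen as in Lemma~\ref{14}, the interior part of the identity yields $\tau^{-1}\big((\kappa(Q_1^2+Q_2^2)+i\tau[Q_2,Q_1])\rconj,\rconj\big)_+$, whose principal symbol $\rho_\kappa=\kappa(q_1^2+q_2^2)+\tau\{q_2,q_1\}$ satisfies $\rho_\kappa\ge C\langle\xi,\tau\rangle^4$ by Lemma~\ref{14}; after a microlocal partition of unity in $(x,\xi',\tau)$ one bounds this quantity from below, modulo negligible remainders and further boundary terms, by $c\,\tau^{-1}$ times a full second--order norm of $\rconj$ on the half--space, using the microlocal G\aa rding inequality (Theorem~\ref{thm: microlocal Garding}) in the region where $P_\varphi$ fails to be elliptic and a direct coercive estimate --- which moreover controls the traces --- in the elliptic region. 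All boundary contributions are then collected. The normal--derivative term $\pd_{x_d}F^d$ in $\mathcal G$ is not estimated as such: it is integrated by parts in $x_d$, leaving $F^d$ (no derivative) in the interior, controlled by $\|F\|_+$, and a boundary term proportional to $F^d_{|x_d=0}\,\overline{\rconj_{|x_d=0}}$, which is rewritten through $v^d_{|x_d=0}=0$ and the second line of \eqref{eq: reslov bd 3} as a combination of $\rconj_{|x_d=0}$ and $\pd_{x_d}\rconj_{|x_d=0}$. Eliminating $\pd_{x_d}\rconj_{|x_d=0}$ everywhere by the boundary condition of Step 1, the collected boundary terms become a quadratic form in $\rconj_{|x_d=0}$, and the hypothesis $\pd_{x_d}\varphi\ge c_1>0$ on $V_0$ ensures that this form controls $|\Lsct^{1/2}\rconj_{|x_d=0}|^2$ from below up to absorbable remainders, so the trace term can be moved to the left. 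Finally, the tangential loss in $\mathcal G$ (one derivative of $\tilde F$) is recovered exactly as in the proof of Theorem~\ref{th: Carleman interior H-1}, by applying the estimate first to $\chi\Lsct^{-1}\rconj$ (with $\chi$ equal to $1$ near $V_0$) and commuting. After multiplication by a suitable power of $\tau$, this gives
\[
|\Lsct^{1/2}\rconj_{|x_d=0}|+\tau^{-1/2}\|\Lsct\rconj\|_+ +\tau^{-1/2}\|\pd_{x_d}\rconj\|_+\le C\big(\|F\|_+ +\|G\|_+\big),
\]
for $\tau\ge\tau_*$ and $\mu$ as in \eqref{hyp: mu eq tau}. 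I expect this step, and specifically the bookkeeping of the boundary terms together with the verification that the sign condition $\pd_{x_d}\varphi(x_0)>0$ makes them favorable once the Neumann--type condition has been inserted, to be the main obstacle; it is the (Lopatinskii--type) counterpart at the boundary of the interior sub--ellipticity condition.

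\emph{Step 3: recovery of $v$ and conclusion.} From the first line of \eqref{eq: reslov bd 3}, $\|\tilde v\|_+\lesssim|\mu|^{-1}\big(\|\Lsct\rconj\|_+ +\tau\|\rconj\|_+ +\|\tilde F\|_+\big)$, and from the second line $\|v^d\|_+\lesssim|\mu|^{-1}\big(\|\pd_{x_d}\rconj\|_+ +\tau\|\rconj\|_+ +\|F^d\|_+\big)$. Using $|\mu|\sim\tau$, the trivial bound $\|\rconj\|_+\le\tau^{-1}\|\Lsct\rconj\|_+$ and the estimate of Step 2, each right--hand side is $\lesssim\tau^{-1/2}\big(\|F\|_+ +\|G\|_+\big)$, whence $\tau^{1/2}\|v\|_+\le C\big(\|F\|_+ +\|G\|_+\big)$. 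Adding this to the estimate for $\rconj$ and enlarging $\tau_*$ so as to absorb the remaining lower--order terms yields the inequality of the proposition; for general $s$ one inserts $\Lsct^{s}$ throughout.
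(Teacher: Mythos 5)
Your overall architecture (reduce to a scalar second--order equation $P_\varphi\rconj=\mathcal G$ with the conjugated Neumann--type condition $\pd_{x_d}\rconj_{|x_d=0}=\tau(\pd_{x_d}\varphi)\rconj_{|x_d=0}-F^d_{|x_d=0}$, prove a boundary Carleman estimate, then recover $v$) is a legitimate alternative to the paper's route, which instead keeps the first--order structure, reduces to the $2\times2$ system \eqref{eq: 2x2} in $(\rconj,v^d)$, diagonalizes it microlocally through $z_1,z_2$ in \eqref{def: z1 z2}, and treats separately the zones $|\Re\alpha|<\tau\pd_{x_d}\varphi$, $|\Re\alpha|>\tau\pd_{x_d}\varphi$, $\Re\alpha=\tau\pd_{x_d}\varphi$ and $\alpha=0$. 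But the central claim of your Step 2 is not correct as stated: after eliminating $\pd_{x_d}\rconj_{|x_d=0}$ by the boundary condition, the accumulated boundary terms do \emph{not} form a positive quadratic form in $\rconj_{|x_d=0}$ merely because $\pd_{x_d}\varphi\ge c_1>0$. In the model case (flat metric, $\pd_{x_d}\varphi=c$ constant) the boundary contribution of $2\Re(Q_2w,iQ_1w)_+$ is, modulo lower--order and data cross terms, $2\tau c\big(|D_{x_d}w|^2_{\partial}-((|D'|^2-\tau^2c^2-\mu^2)w,w)_{\partial}\big)$, and inserting the Neumann--type relation $|D_{x_d}w|_{\partial}\simeq\tau c\,|w|_{\partial}$ turns it into $2\tau c\big((2\tau^2c^2+\mu^2)|w|^2_{\partial}-\big(|D'|^2w,w\big)_{\partial}\big)$, which is \emph{negative} in the zone $|\xi'|\gg\tau\sim|\mu|$, i.e.\ exactly in the elliptic zone where the two roots of $p_\varphi$ in $\xi_d$ lie on opposite sides of the real axis. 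There the trace cannot be extracted from a sign of a globally collected boundary form; it must be recovered microlocally, by factorizing the operator and using the boundary condition to transfer the trace information from the factor with favorable sign to the other one --- which is precisely what the paper does via $z_1+z_2=0$ at $x_d=0$, the estimate \eqref{eq: mult. 2} for $z_2$, and Lemma \ref{lem: cas alpha diff zero} for $z_1$; the sub--ellipticity hypothesis is likewise used only in the zone $\Re\alpha=\tau\pd_{x_d}\varphi$ through \eqref{positivity bracket}. So the boundary analysis you hoped to settle by a single sign argument genuinely requires the zone--by--zone (Lopatinskii--type) splitting of Lemma \ref{lem: Carleman microloc}; this is the heart of the proof and it is missing.

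A secondary difficulty created by your reduction: multiplying the third equation of \eqref{eq: reslov bd 3} by $\mu$ places $\pd_{x_d}F^d$ in the source $\mathcal G$. The tangential regularization $\chi\Lsct^{-1}$ used in Theorem \ref{th: Carleman interior H-1} gains nothing in the normal direction, and integrating $\pd_{x_d}F^d$ by parts against the Carleman multiplier makes $F^d$ meet second--order normal derivatives of $\rconj$ (with an extra factor $\tau$) that your target estimate, which only controls $\tau^{-1/2}\|\Lsct\rconj\|_+$ and $\tau^{-1/2}\|\pd_{x_d}\rconj\|_+$, cannot absorb with the right powers of $\tau$, besides injecting further boundary couplings into the already non--definite boundary form. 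The paper's first--order formulation avoids this entirely: its right--hand side $H=(iF^d,\ iG+i\mu^{-1}\Opt(\delta)\tilde F)$ contains no normal derivative of the data, the tangential derivative being compensated by $\mu^{-1}$. If you repair Step 2 by carrying out the microlocal case analysis at the boundary, you essentially reconstruct the paper's proof.
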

From this proposition we deduce Theorem~\ref{th: Carleman bord} taking $s=0$. Indeed we have
$\tau^{1/2}| \rconj_{|x_d=0}|\lesssim |\Lsct^{1/2}\rconj_{|x_d=0}|$ and from \eqref{eq: reslov bd 1} we have
\[
\tau^{-1/2}\| e^{\tau\varphi} \nablag r \|_+ \lesssim
\| e^{\tau\varphi} \big( \nablag r- i\mu u  \big)\|_++ |\mu| \tau^{-1/2}\| e^{\tau\varphi}  u  \|_+
 \lesssim   \| e^{\tau\varphi}  f  \|_++ \tau^{1/2} \| e^{\tau\varphi}  u  \|_+,
\]
from \eqref{hyp: mu eq tau}.

We begin by reducing the system in a $2\times 2$ system.
We  denote by
$\zeta'\in \Ssct^1 $ the tangential symbol of the operator $- i\nablatg  +i\tau  \nablatg\varphi $. We have 
\[
\zeta_i=\sum_{1\le j\le d-1}g^{ij}(\xi_j+i\tau\pd_{x_j}\varphi) \text{ for }i=1,\ldots, d-1.
\]
Let  $\Opt (\delta):=-i \div_{\tilde g}+i\tau \tilde g(\nablatg \varphi, \cdot)$ where $\delta\in\Ssct^1$. The principal symbol of the operator $\delta$ is 
$(\xi_1+i\tau\pd_{x_1}\varphi,\ldots, \xi_{d-1}+i\tau\pd_{x_{d-1}}\varphi)$ modulo symbol in $\Ssct^0$.
The first equation of \eqref{eq: reslov bd 3} reads $\Opt (\zeta') \rconj -\mu\tilde v=i\tilde F$.
Applying in both side of this equation 
the operator $\Opt (\delta)$, we obtain
\begin{equation}
\Opt (\delta)\tilde v=   -i\mu^{-1}\Opt (\delta) \tilde F+ \mu^{-1} \Opt (\delta)\Opt (\zeta') \rconj.
\end{equation}
From \eqref{def: div grad} we have 
\begin{align*}
-i\div_g v+i\tau \tilde g(\nablatg \varphi, \tilde v)&= \Opt (\delta)\tilde v-i\pd_{x_d}v^d-i hv^d
\\
&=D_{x_d}v^d-i\mu^{-1}\Opt(\delta)\tilde F+ \mu^{-1} \Opt (\delta)\Opt (\zeta') \rconj-i hv^d.
\end{align*}
From this equation and second and third of \eqref{eq: reslov bd 3} we obtain two equations on $\rconj$ and $v^d$, that is
\begin{equation}
	\label{eq: 2x2}
	\begin{cases}
	& D_{x_d}\rconj +i\tau\rconj \pd_{x_d}\varphi-\mu v^d=iF^d  \text{ in } x_d>0, \\
	& D_{x_d}v^d  + \mu^{-1} \Opt (\delta)\Opt (\zeta') \rconj   -\mu \rconj+i\tau v^d\pd_{x_d}\varphi -i hv^d=iG  +i\mu^{-1}\Opt (\delta) \tilde F\text{ in } x_d>0,\\   
	&v^d=0  \text{ on } x_d=0.
	\end{cases}
\end{equation}
Let $U=(\rconj, v^d)$, the system \eqref{eq: 2x2} has the form
\begin{equation*}
D_{x_d}U+BU=H,  \text{ where } H=(iF^d, iG  +\mu^{-1}\Opt (\delta) \tilde F),
\end{equation*}
and $B$ is a tangential matrix operators with principal symbol
\begin{equation*}
b=
\begin{pmatrix}
i\tau \pd_{x_d}\varphi  & -\mu\\
\mu^{-1} q(x,\xi')-\mu & i\tau  \pd_{x_d}\varphi 
\end{pmatrix},
\end{equation*}
modulo $\mu^{-1}\Ssct^1$, 
where $q(x,\xi')= \ds \sum_{1\le i,j\le d-1}g^{ij}(x)\big(\xi_i+i\tau \pd_{x_i}\varphi(x)\big)\big(\xi_j+ i\tau\pd_{x_j}\varphi(x)\big)$.
The characteristic polynomial of $b$ is given by $P(\lambda)=(\lambda -i\tau \pd_{x_d}\varphi )^2+q-\mu^2$.
Let $\alpha\in\mathbb{C}$ such that $\alpha^2= q-\mu^2$ with $\Re \alpha \ge0$. The definition of $\alpha$ is ambiguous
when $q- \mu^2\le 0$ but in this case if $q- \mu^2< 0$ the root are simple and the analysis below is independent of the choice 
of root. In particular the roots are smooth, or if  $q- \mu^2= 0$ the root is double and below, we give a specific analysis in this case.
The root of $P(\lambda)$ are $i\tau \pd_{x_d}\varphi\pm i\alpha$ and the analysis in what follows depends on the location 
of roots in complex plane.
We have the following result, denoting $s=t^2$ where $t,s\in{\mathbb C}$ we have for $r_0>0$,
\begin{equation}
	\label{claim: position carre}
	|\Re t|  \lesseqqgtr r_0\iff 4r_0^2\Re s-4r_0^4+(\Im s)^2 \lesseqqgtr 0.
\end{equation}
Indeed, let $t=x+iy$, we have $\Re s= x^2-y^2$ and $\Im s=2xy$, we obtain 
$4r_0^2\Re s-4r_0^4+(\Im s)^2=4(r_0^2+y^2)(x^2-r_0^2)$ which gives the result. 

From \eqref{claim: position carre}, we obtain that $|\Re \alpha| \lesseqqgtr   \tau | \pd_{x_d}\varphi |$  is equivalent to 
\begin{equation}
	\label{positivity condition}
4\tau^2(  \pd_{x_d}\varphi )^2(\Re q-\mu^2)-4\tau^4(  \pd_{x_d}\varphi )^4+(\Im q)^2  \lesseqqgtr 0,
\end{equation}
where, from the definition of $q$, we have 
\begin{equation*}
\left\{\begin{array}{l}
\ds\Re q (x,\xi')=\sum_{1\le i,j\le d-1}g^{ij}(x)\big(\xi_i\xi_j-\tau^2 \pd_{x_i}\varphi(x)\pd_{x_j}\varphi(x)\big),\\
\ds\Im q (x,\xi')=\tau \sum_{1\le i,j\le d-1}g^{ij}(x) \xi_j\pd_{x_i}\varphi(x).
\end{array}\right.
\end{equation*}

We prove a microlocal Carleman estimate.
 %
 %
 \begin{lem}
 	\label{lem: Carleman microloc}
	 Let $x_0\in \R^{d-1}\times \{  0 \}$, we assume there exist a neighborhood of $x_0$ where 
$\varphi$ satisfies \eqref{sub-ellipticity} the sub-ellipticity condition and  $\pd_{x_d} \varphi(x_0)>0$.  Let $(\xi_0',\tau_0)\in\R^{d-1}\times \R^+ $ be 
such that $|\xi_0'|^2+\tau_0^2=1$. There exist $ W$ be  an open conic set of $(x_0,\xi_0',\tau_0)$,
 $\chi_1 \in \Ssct^0$ be an 
homogenous symbol of order 0  for $\langle \xi',\tau\rangle\ge 1$  supported in $W$ and $\chi_1=1$ in a conic 
neighborhood of $(x_0,\xi'_0,\tau_0)$.  For $s\in\R$,  
there exist $C>0$,   and $ \tau_*>0$ such that
 \begin{multline*}
	|\Lsct^{s+1/2}\Opt(\chi_1)\rconj_{|x_d=0}|
	+\tau ^{1/2} \| \Lsct^{s}\Opt(\chi_1)v\|_+
	+ \tau^{-1/2} \| \Lsct^{s+1}\Opt(\chi_1)\rconj\|_+\\
	 \le C\big( 
 \|  \Lsct^{s}  F \|_+
+ \|  \Lsct^{s}  G\|_+      + \|  \Lsct^{s}    \rconj  \|_+
 + \| \Lsct^{s}   v^d  \|_+
\big),
\end{multline*}
for $v, \rconj \in \SS(\R^d)$ satisfying \eqref{eq: reslov bd 3},    
for every 
$\tau\ge \tau_*$ and $\mu $ satisfying 
\eqref{hyp: mu eq tau}.
 \end{lem}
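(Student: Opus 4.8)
The plan is to prove the estimate microlocally near $(x_0,\xi_0',\tau_0)$, distinguishing cases according to the location in $\C$ of the roots $\xi_d^\pm=-i\tau\pd_{x_d}\varphi\mp i\alpha$ of $p_\varphi(x,\xi',\xi_d,\tau)$ — equivalently of the characteristic polynomial $P(\lambda)=(\lambda-i\tau\pd_{x_d}\varphi)^2+q-\mu^2$ of the tangential matrix symbol $b$ of \eqref{eq: 2x2}, since $\det(\xi_d I+b)=p_\varphi$, $\alpha^2=q-\mu^2$, $\Re\alpha\ge0$. On a small conic set $W$ around the base point one has $\pd_{x_d}\varphi\ge c>0$, hence $\Im\xi_d^+=-(\tau\pd_{x_d}\varphi+\Re\alpha)\le -c\langle\xi',\tau\rangle$ (using \eqref{hyp: mu eq tau} as well): the root $\xi_d^+$ is always strictly in the lower half-plane, at distance $\sim\langle\xi',\tau\rangle$ from $\R$. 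By \eqref{claim: position carre}--\eqref{positivity condition}, $\xi_d^-$ is real at $(x_0,\xi_0',\tau_0)$ exactly when $m_\varphi:=4\tau^2(\pd_{x_d}\varphi)^2(\Re q-\mu^2)-4\tau^4(\pd_{x_d}\varphi)^4+(\Im q)^2$ vanishes there. I would therefore split into the \emph{elliptic} case $m_\varphi(x_0,\xi_0',\tau_0)\ne0$ and the \emph{characteristic} case $m_\varphi(x_0,\xi_0',\tau_0)=0$, choosing $\chi_1$ supported in $W$ small enough that in the elliptic case both roots stay non-real on $W$, and in the characteristic case the sub-ellipticity conclusion of \eqref{sub-ellipticity} holds on all of $W$. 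In both cases I would first regularize by $\Lsct^{-1}$, exactly as in the proof of Theorem~\ref{th: Carleman interior H-1}, to trade the divergence-type term $\mu^{-1}\Opt(\delta)\tilde F$ in the right-hand side of \eqref{eq: 2x2} for an order-$0$ term; this reduction is routine and I suppress it below.

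In the elliptic case the symbol $\xi_d I+b$ is elliptic of order $1$ on $W$ (its determinant $p_\varphi$ has no real $\xi_d$-root there), so I would invoke microlocal elliptic regularity for $D_{x_d}U+BU=H$ up to $\{x_d=0\}$ with the boundary condition $v^d_{|x_d=0}=0$. Using the first equation of \eqref{eq: 2x2} at $x_d=0$, the latter reads as the oblique condition $(D_{x_d}+i\tau\pd_{x_d}\varphi)\rconj_{|x_d=0}=iF^d_{|x_d=0}$ on the scalar component; since $\Im(\xi_d+i\tau\pd_{x_d}\varphi)=\tau\pd_{x_d}\varphi>0$ does not vanish at $\xi_d=\xi_d^-$, this condition satisfies the Lopatinskii requirement for $P_\varphi$, and elliptic regularity then yields $\|\Lsct^{s+1}\Opt(\chi_1)\rconj\|_++\|\Lsct^{s+1}\Opt(\chi_1)v^d\|_++|\Lsct^{s+1/2}\Opt(\chi_1)\rconj_{|x_d=0}|\lesssim \|\Lsct^{s}F\|_++\|\Lsct^{s}G\|_+$ up to lower-order remainders in $\rconj,v^d$. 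The (weaker) weights in the statement then follow, and $\tilde v$ is recovered from the first and third equations of \eqref{eq: reslov bd 3} via \eqref{hyp: mu eq tau}, as in the reduction preceding Proposition~\ref{prop: carleman loc}.

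In the characteristic case $\xi_d^-$ is near-real on $W$ while $\xi_d^+$ stays firmly below $\R$. I would first decouple the $\xi_d^+$-mode by a bounded elliptic order-$0$ reduction of the $2\times 2$ system (possible since $\xi_d^+-\xi_d^-$ is elliptic of order $1$), reducing matters, modulo $\Ssct^0$ and modulo the interior estimate already available, to controlling $P_\varphi\rconj$ by a positive-commutator argument on $\R^d_+$. Running the computation from the proof of Theorem~\ref{21} but retaining boundary terms — with $g=Q_2\rconj+iQ_1\rconj$ and an integration by parts in $x_d$ — produces $\|Q_2\rconj\|_+^2+\|Q_1\rconj\|_+^2+(i[Q_2,Q_1]\rconj,\rconj)_++\mathcal{B}$, where $\mathcal{B}$ is a boundary quadratic form in the Cauchy data $(\rconj_{|x_d=0},D_{x_d}\rconj_{|x_d=0})$. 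For the interior part, sub-ellipticity \eqref{sub-ellipticity} makes $\kappa(q_2^2+q_1^2)+\tau\{q_2,q_1\}\gtrsim\langle\xi',\tau\rangle^4$ on $W$ (a microlocal version of Lemma~\ref{14}), so Theorem~\ref{thm: microlocal Garding} controls it from below by $c\|\Lsct^{s+1}\Opt(\chi_1)\rconj\|_+^2$ modulo remainders, which after dividing by $\tau$ gives the interior term $\tau^{-1}\|\Lsct^{s+1}\Opt(\chi_1)\rconj\|_+^2$ of the statement, the Gårding and reduction remainders being absorbed into $\|\Lsct^s\rconj\|_+$ and $\|\Lsct^s v^d\|_+$. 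For the boundary part, $v^d_{|x_d=0}=0$ together with the first equation of \eqref{eq: 2x2} at $x_d=0$ expresses $D_{x_d}\rconj_{|x_d=0}$ through $\rconj_{|x_d=0}$ and $F^d_{|x_d=0}$, so $\mathcal{B}$ becomes a form in $\rconj_{|x_d=0}$ alone, which I expect to be $\ge c|\Lsct^{s+1/2}\rconj_{|x_d=0}|^2$ up to the data term — this is where $\pd_{x_d}\varphi(x_0)>0$ enters. Recovering the $v^d$- and $\tilde v$-parts as before then closes the estimate.

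The hard part will be this last point: showing that the single scalar boundary condition $v^d_{|x_d=0}=0$, pushed through the elliptic reduction, forces $\mathcal{B}$ to dominate $|\Lsct^{s+1/2}\rconj_{|x_d=0}|^2$ with the correct sign. This is a Lopatinskii-type computation tied to the explicit form of $b$, and it is precisely the step where the hypothesis $\pd_{x_d}\varphi(x_0)>0$ is indispensable. Everything else — the bounded elliptic reductions and their lower-order remainders, the microlocal cut-offs, the $\Lsct^s$ bookkeeping, the $\Lsct^{-1}$ regularization, and the passage between $(\rconj,v^d)$ and $v$ via \eqref{hyp: mu eq tau} — is routine.
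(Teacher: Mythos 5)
Your overall strategy (microlocal case split according to the location of the roots of $P(\lambda)$, scalar reduction, Garding/positive commutator in the characteristic region) is broadly parallel to the paper, which splits into $\alpha\neq 0$ with the three subcases $|\Re\alpha|\lesseqqgtr\tau\pd_{x_d}\varphi$ and the degenerate case $q-\mu^2=0$ handled by direct energy identities on the $2\times 2$ system; your parameter-elliptic/Lopatinskii black box would indeed cover the two elliptic bullets and the double-root point uniformly. The genuine gap is in your characteristic case: the step you yourself label ``the hard part'' --- positivity of the boundary form $\mathcal{B}$ in $\rconj_{|x_d=0}$ after inserting the oblique condition --- is precisely the content of the lemma at such points, and you only state that you ``expect'' it. It is also not how the trace is obtained in the paper: there the trace bound comes from the \emph{other} root. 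The mode $z_2$ attached to $i\tau\pd_{x_d}\varphi+i\alpha$ solves a first-order equation in the good direction, so \eqref{eq: mult. 1} plus tangential Garding give $|\Lsct^{s+1/2}(z_2)_{|x_d=0}|$ and $\|\Lsct^{s+1}z_2\|_+$ from interior data alone (\eqref{eq: mult. 2}); since $v^d_{|x_d=0}=0$ forces $z_1+z_2=0$ at $x_d=0$, this trace is transferred to $z_1$, and sub-ellipticity is only needed to produce the weaker interior bound \eqref{eq: est. simple roots 2} (a gain of half a derivative), with the trace allowed on the right-hand side. If you insist on the second-order route you must actually compute $\mathcal{B}$: its cross terms involve the tangential parts of $q_2$ and $q_1$, whose signs are not fixed by $\pd_{x_d}\varphi(x_0)>0$ alone; and note in addition that a bound $\kappa(q_2^2+q_1^2)+\tau\{q_2,q_1\}\gtrsim\langle\xi,\tau\rangle^4$ cannot be quantized directly on the half-space in the full variable $\xi$ (there is no interior Garding in $\xi_d$ there), which is exactly why the paper works with first-order tangential factorizations rather than with $P_\varphi$ itself near the boundary.

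A second concrete problem is your use of the inhomogeneous oblique condition $(D_{x_d}+i\tau\pd_{x_d}\varphi)\rconj_{|x_d=0}=iF^d_{|x_d=0}$, both in the elliptic case (where the Lopatinskii estimate consumes this boundary data) and in the characteristic case (where you substitute it into $\mathcal{B}$ ``up to the data term''). The lemma's right-hand side contains only the interior norms $\|\Lsct^{s}F\|_+$, and in the intended application $F=e^{\tau\varphi}f$ with $f$ merely in $L^2$, so the trace $F^d_{|x_d=0}$ is neither controlled by, nor even defined from, the admissible data. The paper's system-level argument uses only the homogeneous condition $v^d_{|x_d=0}=0$ (through $z_1+z_2=0$ at $x_d=0$, and through the vanishing of the boundary term in \eqref{est. square root}), and $F$ enters solely via the interior terms $H_j$ of \eqref{eq: on z, large xi} and \eqref{eq: 2x2 bis}. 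To repair your argument you would need a lifting/trace-removal step for $F^d$, or you would have to keep the first-order system and its diagonalization --- at which point you are essentially reproducing the paper's proof.
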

 This lemma implies Proposition  \ref{prop: carleman loc}
  as we can cover $\langle \xi',\tau\rangle= 1$ by a finite number of 
 open sets given by the statement of Lemma~\ref{lem: Carleman microloc}.
 
For the proof of Lemma~\ref{lem: Carleman microloc}, we distinguish two cases, $\alpha\ne0$ and $\alpha=0$.

Assume  that $\alpha(x_0,\xi'_0,\tau_0)\ne 0$.
By continuity and homogeneity in $(\xi',\tau)$, $\alpha\ne0$   in a conic neighborhood $W$
of $(x_0,\xi'_0,\tau_0)$.  Let $\chi_0 \in \Ssct^0$ be an 
homogenous symbol of order 0  for $\langle \xi',\tau\rangle\ge 1$ such that $\chi_0 =1 $ in a 
conic neighborhood of $(x_0,\xi'_0,\tau_0)$, supported in $W$ and $\chi_1$ supported on $\chi_0=1$. 
Writing
\[
b=
\begin{pmatrix}
i\tau \pd_{x_d}\varphi  & -\mu\\
\mu^{-1}\alpha^2 & i\tau  \pd_{x_d}\varphi 
\end{pmatrix},
\]
the left eigenvector associated with $i\tau \pd_{x_d}\varphi  +i\alpha$ (resp.  $i\tau \pd_{x_d}\varphi  -i\alpha$ ) is 
$\begin{pmatrix}
-i\alpha  &   \mu
\end{pmatrix}$ \big(resp. 
$\begin{pmatrix}
 i\alpha    & \mu
\end{pmatrix}$\big).

Let $\tilde \alpha= \chi_0 \alpha$, as $\alpha$ is a smooth homogenous function of order 1 in $W$, $\tilde \alpha \in\Ssct^1$.
Recall the notation $\Lsct^s=\Opt(\langle \xi',\tau\rangle^s)$, according with the above algebraic computations and with the left vector found, we define 
\begin{equation}
	\label{def: z1 z2}
	\left\{\begin{array}{l}
	z_1=-i\Lsct^{-1}\Opt(\tilde \alpha) \Opt(\chi_1)w+\mu  \Lsct^{-1}\Opt(\chi_1) v^d\\
	z_2=i\Lsct^{-1}\Opt(\tilde \alpha) \Opt(\chi_1)w+\mu  \Lsct^{-1}\Opt(\chi_1) v^d.
	\end{array}\right.
\end{equation}
As $v^d=0$ on $x_d=0$ we obtain $z_1+z_2=0$ on $x_d=0$. Applying $\pm i\Lsct^{-1}\Opt(\tilde \alpha) \Opt(\chi_1)$ to the first equation ~\eqref{eq: 2x2},  $\mu\Lsct^{-1}  \Opt(\chi_1) $ to the second equation and summing up, we obtain
\begin{multline}
	\label{eq: on z, large xi}
D_{x_d} z_j + \Opt\big(  i\tau \pd_{x_d}\varphi  +(-1)^ji\tilde\alpha  \big)z_j = H_j \text{ where} \\
\| \Lsct^{s}  H_j\|_+  \lesssim  
 \|  \Lsct^{s}  F^d \|_+
+ \|  \Lsct^{s}  G\|_+   
+\| \Lsct^{s}  \tilde F  \|_+
 + \|  \Lsct^{s}    \rconj  \|_+
 + \| \Lsct^{s}   v^d  \|_+. 
\end{multline}
 We compute
\begin{align}
	\label{eq: mult. 1}
2\Re( H_j, i\Lsct^{2s+1}z_j)_+
&= 2\Re(  D_{x_d} z_j + \Opt\big(  i\tau \pd_{x_d}\varphi  +(-1)^ji\tilde\alpha  \big)z_j ,  i\Lsct^{2s+1}z_j)_+\\
&= |\Lsct^{s+1/2}(z_j)_{|x_d=0}|^2 +2\Re( \Lsct^{2s+1} \Opt\big(  \tau \pd_{x_d}\varphi  +(-1)^j\tilde\alpha  \big)z_j,z_j)_+,  \notag
\end{align}
using that 
\begin{equation}
	\label{for: integ parts}
2\Re(  D_{x_d} h, i\Lsct^{2m}h)_+= | \Lsct^m h_{|x_d=0}  |^2,
\end{equation}
for $h\in  \SS(\R^d) $.

If $j=2$, we have $  \tau \pd_{x_d}\varphi  +\Re \alpha \gtrsim \tau +|\xi'| $ in $W$. Let $\chi_2\in\Ssct^0$ supported in $\chi_0=1$
and $\chi_2=1$ on the support of $\chi_1$. From symbolic calculus we have 
\begin{equation}
	\label{eq: z microloc}
\| \Lsct^s \big(  z_2-\Opt (\chi_2)z_2 \big)\|_+\lesssim \| \Lsct^{-N}\rconj \|_++ \| \Lsct^{-N} v^d \|_+.
\end{equation}
Then the tangential G\aa rding inequality 
of Theorem~\ref{thm: microlocal Garding} applies and we have
\[
2\Re( \Lsct^{2s+1} \Opt\big(  \tau \pd_{x_d}\varphi +\tilde\alpha  \big)z_2,z_2)_+\ge C_1 \| \Lsct^{s+1}z_2\|_+^2 
-C_N\big(
 \| \Lsct ^{-N }w\|_+^2  +   \|  \Lsct ^{-N } v^d \|_+ ^2
 \big).
\]
From \eqref{eq: mult. 1}, we then deduce
\begin{multline}
	\label{eq: mult 1 bis}
2\Re( H_2, i\Lsct^{2s+1}z_2)_+\ge \\
C_1 \big( |\Lsct^{s+1/2}(z_2)_{|x_d=0}|^2+ \| \Lsct^{s+1}z_2\|_+^2 \big)
-C_N\big(
 \| \Lsct ^{-N }w\|_+^2  +   \|  \Lsct ^{-N } v^d \|_+ ^2
 \big),
\end{multline}
for $C_1>0$,  for every $N>0$ and  $C_N>0$, uniformly with respect to $\tau$ chosen sufficiently large.
This implies 
\begin{align}
	\label{eq: mult. 2}
 |\Lsct^{s+1/2}(z_2)_{|x_d=0}|+ \| \Lsct^{s+1}z_2\|_+\lesssim \|\Lsct^{s}   H_2  \|_+  + \| \Lsct ^{-N }w\|_+  +   \|  \Lsct ^{-N } v^d \|_+ .
 \end{align}
 %
 %
 \begin{lem}
 	\label{lem: cas alpha diff zero}
Assume that $\alpha\ne0$ in $W$.

If $\Re \alpha -  \pd_{x_d}\varphi  \ne 0$ on $W$, we have
\begin{align}
	\label{eq: est. simple roots}
 \| \Lsct^{s+1} z_1\|_+\le C\big(  \| \Lsct^{s} H_1\|_+   +| \Lsct ^{s+1/2} (z_1)_{|x_d=0}| 
 + \|\Lsct^{s} \rconj \|_+ +  \|\Lsct^{s} v^d \|_+ 
  \big),
\end{align}   
for some $C>0$.

If $\Re \alpha - \tau  \pd_{x_d}\varphi  =0$ at $(x_0,\xi'_0,\tau_0)$, we have
\begin{align}
	\label{eq: est. simple roots 2}
 \| \Lsct^{s+1/2} z_1\|_+\le C\big(  \| \Lsct^{s} H_1\|_+   +| \Lsct ^{s+1/2} (z_1)_{|x_d=0}| 
 + \|\Lsct^{s} \rconj \|_+ +  \|\Lsct^{s} v^d \|_+ 
  \big),
\end{align}   
for some $C>0$.
 \end{lem}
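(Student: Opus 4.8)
The plan is to carry out, for $z_1$, the same multiplier computation \eqref{eq: mult. 1} already established for $z_2$, noticing that now the first-order term carries the symbol $\langle\xi',\tau\rangle^{2s+1}(\tau\pd_{x_d}\varphi-\Re\alpha)$ (rather than $\tau\pd_{x_d}\varphi+\Re\alpha$), so that the sign behaviour of $\tau\pd_{x_d}\varphi-\Re\alpha$ on $W$ governs the whole argument: the two items of the lemma correspond exactly to this symbol being non-vanishing on $W$ and to its vanishing at the center $(x_0,\xi_0',\tau_0)$.

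\emph{First item.} After possibly shrinking the conic neighbourhood $W$, continuity and connectedness pin down a fixed sign, so $\pm(\tau\pd_{x_d}\varphi-\Re\alpha)\gtrsim\langle\xi',\tau\rangle$ on $W$. Replacing $z_1$ by $\Opt(\chi_2)z_1$ up to the remainder $\|\Lsct^{-N}\rconj\|_++\|\Lsct^{-N}v^d\|_+$ (the analogue of \eqref{eq: z microloc} for $z_1$, valid verbatim from \eqref{def: z1 z2}), the microlocal G{\aa}rding inequality of Theorem~\ref{thm: microlocal Garding} — applied to $\Opt(\tau\pd_{x_d}\varphi-\tilde\alpha)$ when the sign is $+$ and to its opposite when the sign is $-$ — turns the quadratic term of \eqref{eq: mult. 1} into $\pm C\|\Lsct^{s+1}z_1\|_+^2$ modulo lower-order remainders $\|\Lsct^{-N}\rconj\|_+^2+\|\Lsct^{-N}v^d\|_+^2\le\|\Lsct^s\rconj\|_+^2+\|\Lsct^s v^d\|_+^2$. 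Plugging this into \eqref{eq: mult. 1}, using $2\Re(H_1,i\Lsct^{2s+1}z_1)_+\le\varepsilon\|\Lsct^{s+1}z_1\|_+^2+C_\varepsilon\|\Lsct^s H_1\|_+^2$ and absorbing, gives $\|\Lsct^{s+1}z_1\|_+^2\lesssim\|\Lsct^s H_1\|_+^2+|\Lsct^{s+1/2}(z_1)_{|x_d=0}|^2+\|\Lsct^s\rconj\|_+^2+\|\Lsct^s v^d\|_+^2$. When the sign is $+$ the boundary term appears with the favourable sign and can be dropped; when it is $-$ it must be kept, hence its presence in \eqref{eq: est. simple roots}. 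Taking square roots proves \eqref{eq: est. simple roots}.

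\emph{Second item.} When $\Re\alpha=\tau\pd_{x_d}\varphi$ at the center the coefficient symbol vanishes there, $\tau\pd_{x_d}\varphi-\Re\alpha$ need not be sign-definite, and no single G{\aa}rding step produces positivity. The input is that in geodesic normal coordinates $p_\varphi=(\xi_d+i\tau\pd_{x_d}\varphi)^2+\alpha^2$, so by \eqref{claim: position carre} (equivalently \eqref{positivity condition}) the equality $\Re\alpha=\tau\pd_{x_d}\varphi$ at $(x_0,\xi_0',\tau_0)$ says precisely that $p_\varphi(x_0,(\xi_0',\xi_{d,0}),\tau_0)=0$ for the real value $\xi_{d,0}=-\Im\alpha(x_0,\xi_0',\tau_0)$; the sub-ellipticity hypothesis \eqref{sub-ellipticity} then applies at that characteristic point and yields $\{q_2,q_1\}(x_0,(\xi_0',\xi_{d,0}),\tau_0)>0$. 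I would convert this into a positive-commutator (turning-point) estimate for the scalar $x_d$-equation of $z_1$: the zero of $\tau\pd_{x_d}\varphi-\Re\alpha$ is simple, its Poisson bracket with $\xi_d$ being $\{q_2,q_1\}$ up to a positive factor, so a multiplier made of $i\Lsct^{2s+1}z_1$ corrected by a normal-derivative (or suitably weighted tangential) term, combined with Theorem~\ref{thm: microlocal Garding} on the resulting symbol, recovers control of $z_1$ but with only a half-derivative gain, $\|\Lsct^{s+1/2}z_1\|_+$; this optimal loss in the glancing region is exactly why \eqref{eq: est. simple roots 2} features $\Lsct^{s+1/2}$ instead of $\Lsct^{s+1}$. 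This is the boundary counterpart of the way sub-ellipticity entered Lemma~\ref{14} and Theorem~\ref{21} in the interior.

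I expect the second item to be the main obstacle: the delicate point is to make the turning-point mechanism quantitative on the reduced system \eqref{eq: 2x2}, combining the $x_d$-ODE structure with the sub-ellipticity positivity while keeping track that $\tilde\alpha$ is only defined up to the cutoff $\chi_0$ and that $B$ is known modulo $\mu^{-1}\Ssct^1$, so that every error term stays genuinely of lower order and is absorbed into $\|\Lsct^s H_1\|_+$, $\|\Lsct^s\rconj\|_+$, $\|\Lsct^s v^d\|_+$ and $|\Lsct^{s+1/2}(z_1)_{|x_d=0}|$.
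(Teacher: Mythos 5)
Your treatment of the first item is correct and coincides with the paper's: when $\tau\pd_{x_d}\varphi-\Re\alpha$ does not vanish on $W$ it has a fixed sign there, and the multiplier identity \eqref{eq: mult. 1}, combined with the microlocal G{\aa}rding inequality applied to $\pm\Opt(\tau\pd_{x_d}\varphi-\tilde\alpha)$ after a cutoff as in \eqref{eq: z microloc}, yields \eqref{eq: est. simple roots}, the boundary trace being dropped in the favourable case and moved to the right-hand side otherwise; this is exactly how the paper handles its first two sub-cases.

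The second item, however, is only a plan, and that is where the real content of the lemma lies. You correctly observe that $\Re\alpha=\tau\pd_{x_d}\varphi$ at $(x_0,\xi_0',\tau_0)$ forces $p_\varphi$ to vanish at the real frequency $\xi_d=-\Im\alpha$, so that \eqref{sub-ellipticity} applies there, but you never convert this into an inequality that a G{\aa}rding argument can use. The paper does this in two precise steps that are absent from your proposal. First, using the factorization $p_\varphi=(\xi_d+i\tau\pd_{x_d}\varphi+i\alpha)(\xi_d+i\tau\pd_{x_d}\varphi-i\alpha)$ one computes $\{p_\varphi,\overline{p_\varphi}\}$ on the characteristic set and finds that sub-ellipticity amounts to $\{\xi_d+\Im\alpha,\ \tau\pd_{x_d}\varphi-\Re\alpha\}\ge C\langle\xi',\tau\rangle$, first at the point and then, by continuity and homogeneity after shrinking $W$, on the whole conic neighbourhood (\eqref{sub ellipticity, alpha}--\eqref{positivity bracket}); your version, ``the Poisson bracket with $\xi_d$'', omits the $\Im\alpha$ correction, and the discrepancy $\{\Im\alpha,\tau\pd_{x_d}\varphi-\Re\alpha\}$ is a first-order tangential term with no sign, so the positivity you invoke is not the one sub-ellipticity actually provides. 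Second, the quantitative estimate is not obtained by a corrected multiplier acting on $i\Lsct^{2s+1}z_1$, but by splitting $\Opt(i\tau\pd_{x_d}\varphi-i\tilde\alpha)=A+iB$ into self-adjoint and skew parts, expanding $\|(D_{x_d}+A+iB)z\|_+^2$ for $z=\Opt(\chi_0)\Lsct^{s}z_1$ as in \eqref{eq: square of sum}, and applying Theorem~\ref{thm: microlocal Garding} to the commutator $i[D_{x_d}+A,B]$, whose principal symbol is precisely the positive bracket above; the boundary contribution $(Bz_{|x_d=0},z_{|x_d=0})_\pd$ is what produces the trace term, and the half-derivative gain in \eqref{eq: est. simple roots 2} comes out of \eqref{eq: cas racine reelle 1} after symbolic calculus. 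Since you yourself flag this step as the main obstacle and leave it unexecuted, the proof of the second estimate is incomplete as written, even though the route you indicate is the one the paper follows.
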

\begin{proof}
We have to distinguish three cases, that is  
$|\Re \alpha|   \lesseqqgtr \tau | \pd_{x_d}\varphi |$ at $(x_0,\xi'_0,\tau_0)$. 
\begin{itemize}
\item[$\bullet$]
If $|\Re \alpha|<\tau | \pd_{x_d}\varphi |$, from \eqref{positivity condition} this is equivalent to 
\begin{equation*}
4\tau^2(  \pd_{x_d}\varphi )^2(\Re q-\mu^2)-4\tau^4(  \pd_{x_d}\varphi )^4+(\Im q)^2<0.
\end{equation*}
 We have $  \tau \pd_{x_d}\varphi  -\Re \alpha \gtrsim \tau +|\xi'| $ in $W$. Then we have the same computations 
 as in~\eqref{eq: mult 1 bis} and \eqref{eq: mult. 2}, and we have 
 \begin{align*}
 |\Lsct^{s+1/2}(z_1)_{|x_d=0}|+ \| \Lsct^{s+1}z_1\|_+\lesssim \|\Lsct^{s}   H_1  \|_+  + \| \Lsct ^{-N }w\|_+  +   \|  \Lsct ^{-N } v^d \|_+
\end{align*}
which is a better estimate than \eqref{eq: est. simple roots}.
\item[$\bullet$]
 If $|\Re \alpha|>\tau | \pd_{x_d}\varphi |$, from \eqref{claim: position carre} this is equivalent to 
 \begin{equation*}
4\tau^2(  \pd_{x_d}\varphi )^2(\Re q-\mu^2)-4\tau^4(  \pd_{x_d}\varphi )^4+(\Im q)^2>0.
\end{equation*}
Observe that  this case contains the case where $\tau_0=0$ as $|\xi'_0|=1$, and in $W$ we have 
$q(x,\xi', \tau)\ge c|\xi'|^2$ and $|\xi'|\gg\tau$.

As  $ -\tau \pd_{x_d}\varphi  +\Re \alpha \gtrsim \tau +|\xi'| $ in $W$, from~\eqref{eq: mult. 1}, we can introduce a cutoff as 
in \eqref{eq: z microloc} to apply  the tangential G\aa rding inequality 
of Theorem~\ref{thm: microlocal Garding}, we deduce
\begin{align*}
-2\Re( H_1, i\Lsct^{2s+1}z_1)_+ +  |\Lsct^{s+1/2}(z_1)_{|x_d=0}|^2\gtrsim \| \Lsct^{s+1}z_1\|_+^2-C_N\big(
 \| \Lsct ^{-N }w\|_+^2  +   \|  \Lsct ^{-N } v^d \|_+ ^2
 \big),
\end{align*}
and then
\begin{align*}
 \| \Lsct^{s+1}z_1\|_+\lesssim \|\Lsct^{s}   H_1  \|_++  |\Lsct^{s+1/2}(z_1)_{|x_d=0}| +\| \Lsct ^{-N }w\|_+  +   \|  \Lsct ^{-N } v^d \|_+ ,
\end{align*}
which implies \eqref{eq: est. simple roots}.
\item[$\bullet$]
If $\Re \alpha =\tau  \pd_{x_d}\varphi $ at $(x_0,\xi'_0,\tau_0)$, from \eqref{claim: position carre}  and as $\Re \alpha$ and 
$ \pd_{x_d}\varphi $ are positive, this is equivalent to 
\begin{equation*}
4\tau^2(  \pd_{x_d}\varphi )^2(\Re q-\mu^2)-4\tau^4(  \pd_{x_d}\varphi )^4+(\Im q)^2=0, \text{ at } (x_0,\xi'_0,\tau_0).
\end{equation*}
\end{itemize}
We use Carleman technics to obtain an estimate. Before doing that we must translate sub-ellipticity assumption 
\eqref{sub-ellipticity} on $p_\varphi$ on analogous condition on $\alpha$. 
First observe that
\[
p_\varphi(x,\xi,\tau)=(\xi_d+i\tau  \pd_{x_d}\varphi )^2
+\alpha^2= (\xi_d+i\tau  \pd_{x_d}\varphi +i\alpha) (\xi_d+i\tau  \pd_{x_d}\varphi -i\alpha).
\]
As $i\tau_0  \pd_{x_d}\varphi(x_0) -i\alpha(x_0,\xi'_0,\tau_0)\in\R$, $p_\varphi=0$ is equivalent to 
$\xi_d+i\tau  \pd_{x_d}\varphi -i\alpha=0$. Noting that $i\tau  \pd_{x_d}\varphi +i\alpha\not\in\R$ 
thus $\xi_d+i\tau  \pd_{x_d}\varphi +i\alpha \ne0$ in $W$.
Second, for a smooth function $q=q_r+iq_i$
where $q_r,q_i$ are real valued, we have $\{q,\bar q\}= 2i\{q_i , q_r\}$.
Thus on $p_\varphi=0$ we have 
\begin{align*}
\{ p_\varphi ,\overline{p_\varphi}  \}&=| \xi_d+i\tau  \pd_{x_d}\varphi +i\alpha |^2
\{ \xi_d+i\tau  \pd_{x_d}\varphi -i\alpha , \xi_d-i\tau  \pd_{x_d}\varphi +i\bar\alpha \}\\
&= 2i| \xi_d+i\tau  \pd_{x_d}\varphi +i\alpha |^2
\{ \tau  \pd_{x_d}\varphi -\Re \alpha , \xi_d+ \Im \alpha \}.
\end{align*}
Thus sub-ellipticity condition reads in $W$, there exists $C>0$ such that
\begin{align}
	\label{sub ellipticity, alpha}
\xi_d+i\tau  \pd_{x_d}\varphi -i\alpha=0 \Longrightarrow  \{ \xi_d+ \Im \alpha,  \tau 
 \pd_{x_d}\varphi -\Re \alpha  \}\ge C\langle \xi',\tau\rangle.
\end{align}
At $(x_0,\xi'_0,\tau_0)$, observe that we can choose $\xi_d $ such that $\xi_d+ \Im \alpha=0$ and as 
$  \tau_0  \pd_{x_d}\varphi -\Re \alpha=0$, the condition \eqref{sub ellipticity, alpha} means, by continuity and homogeneity,
 there exists $C>0$ such that
\begin{equation}
	\label{positivity bracket}
 \{ \xi_d+ \Im \alpha,  \tau  \pd_{x_d}\varphi -\Re \alpha  \}\ge C\langle \xi',\tau\rangle \text{ in } W,
\end{equation}
eventually shrinking $W$.

Let 
\begin{align*}
&A=\frac12\big(   \Opt  (i\tau \pd_{x_d}\varphi  -i\tilde\alpha )+ \Opt  (i\tau \pd_{x_d}\varphi  -i\tilde\alpha )^*\big), \\
&B=\frac1{2i}\big(   \Opt  (i\tau \pd_{x_d}\varphi  -i\tilde\alpha )- \Opt  (i\tau \pd_{x_d}\varphi  -i\tilde\alpha )^*\big).
\end{align*}
We have $A=A^*$,  $B=B^*$,  $ \Opt  (i\tau \pd_{x_d}\varphi  -i\tilde\alpha )=A+iB$, and principal symbol of $A$ is 
$\Im \tilde\alpha $
and principal symbol of $B$ is $ \tau \pd_{x_d}\varphi  -\Re \tilde\alpha$.

Now from~\eqref{eq: on z, large xi}  we compute for  $z=\Opt(\chi_0)\Lsct^{s}  z_1$ 
\begin{align}
	\label{eq: square of sum}
 \|  \big( D_{x_d}  + \Opt  (i\tau \pd_{x_d}\varphi  -i\tilde\alpha ) \big)z   \|_+^2
 &=  \|   \big( D_{x_d}  + A \big)z   \|_+^2   
+ \|  Bz   \|_+^2   
\\ & \quad +  2\Re \Big(   \big(  D_{x_d}  + A \big)z  ,    i  B z   \Big)_+.\notag
\end{align}
We have  
\begin{equation*}
2\Re   \big(  D_{x_d}  z ,    i  B z   \big)_+=   \big( [ D_{x_d},  i  B ]z , z \big)_++ (Bz_{|x_d=0},z_{|x_d=0})_\pd .
\end{equation*}
As the principal symbol of $B$ is $ \tau \pd_{x_d}\varphi  -\Re \tilde\alpha$, we obtain
\begin{multline}
	\label{eq: IPP dx B}
2\Re   \big(  D_{x_d}  z ,    i  B z   \big)_+
\ge \Re \big(i  [    D_{x_d} ,    \Opt  (\tau \pd_{x_d}\varphi  -\Re \tilde\alpha ) ]z, z  \big)_+ \\
\quad+ \Re ( \Opt( \tau \pd_{x_d}\varphi  -\Re \tilde\alpha  )z_{|x_d=0} , z_{|x_d=0})_\pd - C\| z\|_+^2-C |z_{|x_d=0}|^2,
\end{multline}
for some constant $C>0$.
We also have
\begin{align*}
 2\Re \big(    A z  ,    i  B z   \big)_+= \big(  i  [  A,B] z  ,   z   \big)_+\ge\Re  \big(i [   \Opt  ( \Im \tilde\alpha )  ,   
 \Opt  (\tau \pd_{x_d}\varphi  -\Re \tilde\alpha ) ]z   \big)_+ -C\| z\|_+^2.
\end{align*}
Then from this estimate and \eqref{eq: IPP dx B}, we obtain
\begin{multline}
	\label{eq: inner pd}
  2\Re \Big(   \big(  D_{x_d}  + A \big)z  ,    i  B z   \Big)_+
 \ge
 \Re \big(i  [    D_{x_d} + \Opt  ( \Im \tilde\alpha )
 ,    \Opt  (\tau \pd_{x_d}\varphi  -\Re \tilde\alpha ) ]z, z  \big)_+
\\
 + \Re ( \Opt( \tau \pd_{x_d}\varphi  -\Re \tilde\alpha  )z_{|x_d=0} , z_{|x_d=0})_\pd  - C\| z\|_+^2-C |z_{|x_d=0}|^2.
\end{multline}
The principal symbol of $i [    D_{x_d} + \Opt  ( \Im \tilde\alpha ) ,    \Opt  (\tau \pd_{x_d}\varphi  -\Re \tilde\alpha ) ]$ is 
$ \{ \xi_d+ \Im \alpha,  \tau  \pd_{x_d}\varphi -\Re \alpha  \}$,
 then from \eqref{positivity bracket} and microlocal 
G\aa rding inequality of Theorem~\ref{thm: microlocal Garding}, we have
\begin{align}
	\label{eq: bracket +}
  \Re \big(i  [    D_{x_d} + \Opt  ( \Im \tilde\alpha ) 
 ,    \Opt  (\tau \pd_{x_d}\varphi  -\Re \tilde\alpha ) ]z, z  \big)_+ \ge C_1 \| \Lsct^{1/2} z\|_+^2 -  C_N\| \Lsct^{-N} z_1\|_+^2 .
\end{align}
We have 
\[
 |( \Opt( \tau \pd_{x_d}\varphi  -\Re \tilde\alpha  )z_{|x_d=0} , z_{|x_d=0})_\pd|\lesssim | \Lsct ^{1/2} z_{|x_d=0}|^2,
\]
then from~\eqref{eq: square of sum}, \eqref{eq: inner pd} and \eqref{eq: bracket +} we obtain
\begin{align}
	\label{eq: cas racine reelle 1}
 \| \Lsct^{1/2} z\|_+ 
 \lesssim
 \big\| \big( D_{x_d}  + \Opt  (i\tau \pd_{x_d}\varphi  -i\tilde\alpha ) \big)z   \big\|_+
 + | \Lsct ^{1/2} z_{|x_d=0}| +\| \Lsct^{-N} z_1\|_+,
\end{align}
as we can absorb the remainder term $\| z\|_+^2$ by the left hand side.
Recalling  the definition of $z_1$ given by formula~\eqref{def: z1 z2}, the 
symbolic calculus yields
\[
\|\Lsct^{1/2}\Opt(\chi_0)\Lsct^{s}z_1-\Lsct^{s+1/2}z_1  \|_+\lesssim \|\Lsct^{s-1/2} \rconj \|_+ +  \|\Lsct^{s-1/2} v^d \|_+.
\]
From  $z=\Opt(\chi_0)\Lsct^{s}  z_1$,  we deduce 
\begin{align}
\|\Lsct^{s+1/2}z_1\|_+\lesssim \| \Lsct^{1/2} z\|_+  + \|\Lsct^{s-1/2} \rconj \|_+ +  \|\Lsct^{s-1/2} v^d \|_+.
\end{align}
Symbolic calculus also gives 
\begin{align}
 \big\| \big( D_{x_d}  + \Opt  (i\tau \pd_{x_d}\varphi  -i\tilde\alpha ) \big)z   \big\|_+\lesssim \| \Lsct ^{s}H_1\|_+
 + \|\Lsct^{s} \rconj \|_+ +  \|\Lsct^{s} v^d \|_+,
\end{align}
and
\begin{align}
	\label{eq: cas racine reelle 2}
 | \Lsct ^{1/2} z_{|x_d=0}| \lesssim  | \Lsct ^{s+1/2}( z_1)_{|x_d=0}|  .
\end{align}
Then from \eqref{eq: cas racine reelle 1}--\eqref{eq: cas racine reelle 2} we obtain
\begin{align}
\|\Lsct^{s+1/2}z_1\|_+\lesssim  \| \Lsct ^{s}H_1\|_++ | \Lsct ^{s+1/2}( z_1)_{|x_d=0}|
 + \|\Lsct^{s} \rconj \|_+ +  \|\Lsct^{s} v^d \|_+,
\end{align}
which is  \eqref{eq: est. simple roots 2}. 
This achieves the proof of Lemma~\ref{lem: cas alpha diff zero} as we have treated the three cases.
\end{proof}
We can prove Lemma \ref{lem: Carleman microloc} in the case $\alpha\ne0$.

If $\Re \alpha -  \pd_{x_d}\varphi  \ne 0$ on $W$,
from \eqref{eq: mult. 2},     Lemma~\ref{lem: cas alpha diff zero},
and as $z_1+z_2=0$ on  $x_d=0$, we deduce
\begin{multline*}
	|\Lsct^{s+1/2}(z_2)_{|x_d=0}|+ \| \Lsct^{s+1}z_2\|_++ \| \Lsct^{s+1}z_1\|_+    \\
	\lesssim \|\Lsct^{s}   H_2  \|_++ \|\Lsct^{s}   H_1  \|_+
	 + \|\Lsct^{s} \rconj \|_+ +  \|\Lsct^{s} v^d \|_+ .
\end{multline*}
From \eqref{eq: on z, large xi} we deduce
\begin{align}
	\label{eq: mult. 4}
	|\Lsct^{s+1/2}(z_2)_{|x_d=0}|+ \| \Lsct^{s+1}z_2\|_++ \| \Lsct^{s+1}z_1\|_+
	& \lesssim  
 \|  \Lsct^{s}  G\|_+     +  \|  \Lsct^{s}  F^d \|_++  \| \Lsct^{s}  \tilde F  \|_+\\
 &  \quad 
 + \| \Lsct^{s}   v^d  \|_+
  + \|  \Lsct^{s}    \rconj  \|_+   . \notag
\end{align}
We have from  \eqref{def: z1 z2}, $z_1+z_2= 2\mu \Lsct^{-1}\Opt(\chi_1)v^d$ and from \eqref{hyp: mu eq tau}
we deduce 
\begin{equation}  
\tau\| \Lsct^{s}\Opt(\chi_1)v^d\|_+\lesssim |\mu| \| \Lsct^{s}\Opt(\chi_1)v^d\|_+\lesssim   \| \Lsct^{s+1}z_1\|_+ +\| \Lsct^{s+1}z_2\|_+.
\end{equation}
We have $\Opt(\tilde\alpha)^*\Lsct^{s}\Lsct^{s}\Opt(\tilde\alpha)= \Opt(\langle \xi',\tau \rangle^{2s}\tilde\alpha^2)$ 
modulo an operator of order $2s+1$.
As $\tilde\alpha$ is not $0$ on the support of $\chi_1$, the tangential G\aa rding inequality 
of Theorem~\ref{thm: microlocal Garding} yields
\begin{equation*}
\| \Lsct^{s}\Opt(\tilde\alpha)\Opt(\chi_1)\rconj\|_+ + \|\Lsct^{-N}\rconj\|_+ \gtrsim \| \Lsct^{s}\Opt(\chi_1)\rconj\|_+ ,
\end{equation*}
for every $N>0$.
From this and as $z_2-z_1=2i\Lsct^{-1}\Opt(\tilde\alpha)\Opt(\chi_1)\rconj$ from \eqref{def: z1 z2}, we deduce
\begin{equation} 
 \| \Lsct^{s+1}\Opt(\chi_1)\rconj\|_+ \lesssim   \| \Lsct^{s+1}z_1\|_+ +\| \Lsct^{s+1}z_2\|_+ + \|\Lsct^{-N}\rconj\|_+.
\end{equation}
From the first equation of \eqref{eq: reslov bd 3} and from \eqref{hyp: mu eq tau} we have 
\begin{align}
\tau \| \Lsct^{s}\Opt (\chi_1) \tilde v\|_+
&\lesssim \| \Lsct^{s+1} \Opt (\chi_1) \rconj\|_++ \| \Lsct^{s} \tilde F\|_+ 
+ \| \Lsct^{s}   \rconj\|_+ \nonumber \\
&  \lesssim   \| \Lsct^{s+1}z_1\|_+ +\| \Lsct^{s+1}z_2\|_+ + \| \Lsct^{s} \tilde F\|_+ 
+ \| \Lsct^{s}   \rconj\|_+. 
\end{align}
From \eqref{def: z1 z2}, $(z_2)_{|x_d=0}=    i\Lsct^{-1}\Opt(\tilde\alpha)\Opt(\chi_1)\rconj_{|x_d=0} $, 
arguing as from above and using the G\aa rding estimate of Theorem \ref{ASC8},
we have 
\begin{equation}
	\label{eq: est trace microloc conj}
| \Lsct^{s+1/2} \Opt(\chi_1) \rconj_{|x_d=0}  |\lesssim | \Lsct^{s+1/2}  (z_2)_{|x_d=0} |.
\end{equation}
 From \eqref{eq: mult. 4}--\eqref{eq: est trace microloc conj}  we obtain Lemma \ref{lem: Carleman microloc}.

 If $\Re \alpha - \tau  \pd_{x_d}\varphi  =0$ at $(x_0,\xi'_0,\tau_0)$,  adding 
 \eqref{eq: mult. 2} to $\varepsilon$\eqref{eq: est. simple roots 2} for $\varepsilon>0$, we deduce
\begin{multline*}
	|\Lsct^{s+1/2}(z_2)_{|x_d=0}|+ \| \Lsct^{s+1/2}z_2\|_++ \varepsilon\| \Lsct^{s+1/2}z_1\|_+    \\
	\lesssim \|\Lsct^{s}   H_2  \|_++ \varepsilon\|\Lsct^{s}   H_1  \|_+
	 + \|\Lsct^{s} \rconj \|_+ +  \|\Lsct^{s} v^d \|_++ \varepsilon|\Lsct^{s+1/2}(z_1)_{|x_d=0}|.
\end{multline*}
From \eqref{eq: on z, large xi} and as $z_1+z_2=0$ on  $x_d=0$, we deduce for $\varepsilon$ small enough tat
\begin{multline}
	\label{eq: mult. 4Bis}
	|\Lsct^{s+1/2}(z_2)_{|x_d=0}|+ \| \Lsct^{s+1/2}z_2\|_++ \| \Lsct^{s+1/2}z_1\|_+   \\
	 \lesssim  
 \|  \Lsct^{s}  G\|_+     +  \|  \Lsct^{s}  F^d \|_++  \| \Lsct^{s}  \tilde F  \|_+
  + \| \Lsct^{s}   v^d  \|_+
  + \|  \Lsct^{s}    \rconj  \|_+   . 
\end{multline}
We have from  \eqref{def: z1 z2}, $z_1+z_2= 2\mu \Lsct^{-1}\Opt(\chi_1)v^d$. Let $\chi_2\in\Ssct^0$ supported in $\chi_0=1$
and $\chi_2=1$ on the support of $\chi_1$. From symbolic calculus we have 
\begin{equation*}
\|(\Opt (\chi_2)  \Lsct^{s+3/2}\mu^{-1} )( \mu \Lsct^{-1}\Opt(\chi_1)v^d)-  \Lsct^{s+1/2}\Opt(\chi_1)v^d
\|_+\lesssim  \| \Lsct^{-N} v^d \|_+.
\end{equation*}
As $\Opt (\chi_2)  \Lsct^{s+3/2}\mu^{-1} $ is an operator of order $s+1/2$ as $|\mu|$ and $|\xi'|$ are comparable on the support
of $\chi_2$, 
 we deduce
\begin{equation}  
\tau^{1/2}\| \Lsct^{s}\Opt(\chi_1)v^d\|_+\lesssim \| \Lsct^{s+1/2}\Opt(\chi_1)v^d\|_+\lesssim   \| \Lsct^{s+1/2}z_1\|_+ +\| \Lsct^{s+1/2}z_2\|_++\| \Lsct^{-N} v^d \|_+.
\end{equation}
We have $\tau^{-1}\Opt(\tilde\alpha)^*\Lsct^{s}\Lsct^{s}\Opt(\tilde\alpha)= \tau^{-1}\Opt(\langle \xi',\tau \rangle^{2s}\tilde\alpha^2)$ 
modulo an operator of order $2s$.
As $\tilde\alpha$ is not $0$ on the support of $\chi_1$, the tangential G\aa rding inequality 
of Theorem~\ref{thm: microlocal Garding} yields
\begin{equation*}
\tau^{-1/2}\| \Lsct^{s}\Opt(\tilde\alpha)\Opt(\chi_1)\rconj\|_+ + \|\Lsct^{-N}\rconj\|_+ \gtrsim  \tau^{-1/2} 
\| \Lsct^{s+1}\Opt(\chi_1)\rconj\|_+ ,
\end{equation*}
for every $N>0$.
From this and as $z_2-z_1=2i\Lsct^{-1}\Opt(\tilde\alpha)\Opt(\chi_1)\rconj$ from \eqref{def: z1 z2}, we deduce, using 
symbolic calculus and $\chi_2\chi_1=\chi_1$, 
\begin{align} 
	\label{eq: estimation rconj}
\tau^{-1/2} \| \Lsct^{s+1}\Opt(\chi_1)\rconj\|_+& \lesssim   \tau^{-1/2}\| \Lsct^{s+1}\Opt (\chi_2) \Lsct^{-1}\Opt(\tilde\alpha)\Opt(\chi_1)\rconj\|_+ + \|\Lsct^{-N}\rconj\|_+ \\
& \lesssim   \| \Lsct^{s+1/2}z_1\|_+ +\| \Lsct^{s+1/2}z_2\|_+ + \|\Lsct^{-N}\rconj\|_+, \notag
\end{align}
as $\tau^{-1/2}  \Lsct^{s+1}\Opt (\chi_2) $ is an operator of order $s+1/2$.

From the first equation of \eqref{eq: reslov bd 3} and  \eqref{hyp: mu eq tau},
we have 
\begin{align}  
\tau^{1/2}\| \Lsct^{s}\Opt (\chi_1) \tilde v\|_+
&\lesssim \| \Lsct^{s+1}\mu^{-1}\tau^{1/2}  \Opt (\chi_1) \rconj\|_++ \| \Lsct^{s} \tilde F\|_+ 
+ \| \Lsct^{s}   \rconj\|_+ \\
&\lesssim  \| \Lsct^{s+1/2}z_1\|_+ +\| \Lsct^{s+1/2}z_2\|_+ + \| \Lsct^{s}   \rconj\|_++ \| \Lsct^{s} \tilde F\|_+,  \notag
\end{align}
from \eqref{eq: estimation rconj}.

From \eqref{def: z1 z2}, $(z_2)_{|x_d=0}=    i\Lsct^{-1}\Opt(\tilde\alpha)\Opt(\chi_1)\rconj_{|x_d=0} $, arguing as from 
above and using the G\aa rding estimate of Theorem \ref{ASC8},
we have 
\begin{equation}
	\label{eq: est trace microloc conj Bis}
| \Lsct^{s+1/2} \Opt(\chi_1) \rconj_{|x_d=0}  |\lesssim | \Lsct^{s+1/2}  (z_2)_{|x_d=0} |.
\end{equation}
 From \eqref{eq: mult. 4Bis}--\eqref{eq: est trace microloc conj Bis}  we obtain Lemma \ref{lem: Carleman microloc}. 
  
 Now we consider the case $q-\mu^2=0$. Let $\eps>0$, we can shrink $W$ such that 
$|q-\mu^2|\le \eps \langle \xi',\tau\rangle^2$ in $W$. Note that $|\mu|\sim\tau\sim|\xi'|$ on $W$.
Let  $\chi_1$ be the cutoff defined previously supported on $W$ and 
$\chi_0$ supported on $W$ and $\chi_0=1$ on the support of $\chi_1$.
By symbolic calculus we have  
\begin{align}
	\label{eq: commutateur q chi}
\Opt(\chi_1)\Opt (\delta)\Opt (\zeta')&=\Opt (\delta)\Opt (\zeta')\Opt(\chi_1)+[ \Opt(\chi_1),\Opt (\delta)\Opt (\zeta')]
\\
&=\Opt(q)\Opt(\chi_1)+\Opt(r_1)\Opt(\chi_1)  \notag
\\
&\quad + \Opt(\chi_0)[ \Opt(\chi_1),\Opt (\delta)\Opt (\zeta')]+ \Opt(r_{-N}),\notag
\end{align}
where $r_1\in\Ssct^1$ and $r_{-N}\in\Ssct^{-N}$.
Observe that $\mu^{-1}\chi_j\in\Ssct^{-1}$ for $j=1,2$.

From \eqref{eq: 2x2}, \eqref{eq: commutateur q chi}, and
by symbolic calculus  we have
\begin{equation*}
	\left\{\begin{array}{ll}
	D_{x_d}\Opt(\chi_1)\rconj +i\tau(\pd_{x_d}\varphi)\Opt(\chi_1)\rconj -\mu \Opt(\chi_1)v^d=H_1 & \text{ in } x_d>0, \\
	D_{x_d}\Opt(\chi_1)v^d  + \mu^{-1} \Opt (q-\mu^{2})\Opt(\chi_1) \rconj
	+i\tau(\pd_{x_d}\varphi) \Opt(\chi_1)v^d=H_2 & \text{ in } x_d>0,\\   
	v^d=0 & \text{ on } x_d=0,
	\end{array}\right.
\end{equation*}
where
\begin{equation}
\label{eq: 2x2 bis}
	\| \Lsct^{s}  H_j\|_+  \le C_\eps
	\big(
 \|  \Lsct^{s}  F^d \|_+
+ \|  \Lsct^{s}  G\|_+   
    +\|  \Lsct^{s}    \rconj  \|_+
 + \| \Lsct^{s}   v^d  \|_+
+\| \Lsct^{s}  \tilde F  \|_+\big),
\end{equation}
for $ j=1,2$ with $C_\eps$ depends on $\eps$.
We compute
\begin{multline}
	\label{eq: alpha zero 1}
2\Re ( H_1,i\Lsct^{2s+1}\Opt(\chi_1)\rconj )_+\\
=2\Re (D_{x_d}\Opt(\chi_1)\rconj 
+i\tau(\pd_{x_d}\varphi)\Opt(\chi_1)\rconj -\mu \Opt(\chi_1)v^d,i\Lsct^{2s+1}\Opt(\chi_1)\rconj)_+.
\end{multline}
By microlocal G\aa rding inequality of Theorem~\ref{thm: microlocal Garding} we have, using  $\tau \sim |\xi'|$ on $W$
\begin{equation}
2\Re (i\tau(\pd_{x_d}\varphi)\Opt(\chi_1)\rconj,i\Lsct^{2s+1}\Opt(\chi_1)\rconj)_+\ge C_0\|  \Lsct^{s+1} \Opt(\chi_1) \rconj \|_+^2
-C_N\| \Lsct^{-N}\rconj \|^2
\end{equation}
for $C_0>0$, for all $N>0$ and $C_N>0$. 

\medskip

From this, \eqref{for: integ parts} and \eqref{eq: alpha zero 1} we obtain
\begin{multline}
2\Re ( H_1,i\Lsct^{2s+1}\Opt(\chi_1)\rconj )_+\\
\ge |\Lsct^{s+1/2}\Opt(\chi_1)\rconj_{|x_d=0}|^2 + C_1 \| \Lsct^{s+1}\Opt(\chi_1)\rconj \|_+^2-  \mu^2C_2 \| \Lsct^{s}\Opt(\chi_1)v^d \|_+^2
-C_N\| \Lsct^{-N}\rconj \|^2,
\end{multline}
for $C_1,\,C_2>0$, for all $N>0$ and $C_N>0$.

\medskip

We then obtain
\begin{multline}
		\label{est. square root 1}
 |\Lsct^{s+1/2}\Opt(\chi_1)\rconj_{|x_d=0}|^2 + \| \Lsct^{s+1}\Opt(\chi_1)\rconj \|_+^2
  \le  \mu^2 C_3  \| \Lsct^{s}\Opt(\chi_1)v^d \|_+^2
\\
+ C_\eps
	\big(
 \|  \Lsct^{s}  F^d \|_+
+ \|  \Lsct^{s}  G\|_+  
    + \|  \Lsct^{s}    \rconj  \|_+
 + \| \Lsct^{s}   v^d  \|_+
+\| \Lsct^{s}  \tilde F  \|_+\big),
\end{multline}
for $C_3>0$, for all $N>0$ and $C_N,C_\eps>0$.

\medskip

Now we compute
\begin{multline}
	\label{est. square root}
2\Re (H_2,i\mu\Lsct^{2s}\Opt(\chi_1)v^d)_+=2\Re(D_{x_d}\Opt(\chi_1)v^d, i\mu \Lsct^{2s}\Opt(\chi_1)v^d)_+\\
+2\Re(\mu^{-1} \Opt (q)\Opt(\chi_1) \rconj   -\mu \Opt(\chi_1)\rconj
	+i\tau(\pd_{x_d}\varphi) \Opt(\chi_1)v^d, i\mu \Lsct^{2s}\Opt(\chi_1)v^d)_+.
\end{multline}
From \eqref{for: integ parts} we have $2\Re(D_{x_d}\Opt(\chi_1)v^d  , i\Lsct^{2s}\Opt(\chi_1)v^d)_+=0$ as $v^d=0$ on $x_d=0$.

As $C\eps^2  \left\langle \xi',\tau\right\rangle^2 -\mu^{-2}(q-\mu^2)^2\ge \eps^2  \left\langle \xi',\tau\right\rangle^2 $, on $W$ with $C>0$, using that  $\tau\sim|\mu| \sim | \xi'|$, we have 
by microlocal G\aa rding inequality of Theorem~\ref{thm: microlocal Garding}  
\begin{multline*}
2\Re ( \mu^{-1} \Opt (q -\mu^2) \Opt ( \chi_1) \rconj   , \Lsct^{2s} \mu^{-1} \Opt (q -\mu ^2) \Opt (\chi_1) \rconj)_+
\\
\le  C_4 \eps^2 \|\Lsct^{s+1} \Opt(\chi_1)\rconj  \|_+^2 + C_{N,\eps}\|\Lsct^{-N} \rconj \|_+^2 .  
\end{multline*}
Then we have
\begin{multline}
	\label{eq: est. square root 2}
2\Re ( \mu^{-1} \Opt (q)\Opt(\chi_1) \rconj   -\mu \Opt(\chi_1)\rconj, i\mu \Lsct^{2s}\Opt(\chi_1)v^d)_+ \\
\le \eps |\mu| C_5 \|  \Lsct^{s}\Opt(\chi_1)v^d\|_+
\big( 
 \| \Lsct^{s+1}\Opt(\chi_1)\rconj  \|_++C_{N,\eps}\|\Lsct^{-N} \rconj \|_+
\big),
\end{multline}
for $C_5>0$.

From microlocal G\aa rding inequality of Theorem~\ref{thm: microlocal Garding} and as   $\pd_{x_d} \varphi(x_0)>0$, we have
\begin{equation}
\label{eq: square root bis}
2\Re(i\tau(\pd_{x_d}\varphi) \Opt(\chi_1)v^d, i\mu\Lsct^{2s}\Opt(\chi_1)v^d)_+\ge \mu^2C_6\|\Lsct^{s}\Opt(\chi_1) v^d  \|_+^2
-C_N\| \Lsct^{-N}v^d \|_+^2,
\end{equation}
where $C_6>0$ is independent of $\eps$, for all $N>0$, $C_N>0$.

\medskip

From \eqref{eq: 2x2 bis} and \eqref{est. square root}--\eqref{eq: square root bis} we obtain 
\begin{align*}
\mu^2 \|\Lsct^{s}\Opt(\chi_1) v^d  \|_+^2&\le  |\mu| \eps C_2\|  \Lsct^{s}\Opt(\chi_1)v^d\|_+ 
 \big( 
 \| \Lsct^{s+1}\Opt(\chi_1)\rconj  \|_++ C_{N,\eps}\|\Lsct^{-N} \rconj \|_+
\big)
 \notag \\
&\quad + C_\eps
	\big(
 \|  \Lsct^{s}  F^d \|_+
+ \|  \Lsct^{s}  G\|_+  + \|  \Lsct^{s}    \rconj  \|_+  + \| \Lsct^{s}   v^d  \|_+
 +\| \Lsct^{s}  \tilde F  \|_+\big)^2.
\end{align*}
We deduce 
\begin{align}
	\label{est square root 2}
\mu^2 \|\Lsct^{s}\Opt(\chi_1) v^d  \|_+^2&\le   \eps^2 C_7   
 \big( 
 \| \Lsct^{s+1}\Opt(\chi_1)\rconj  \|_+^2  \big)
 \notag \\
&\quad + C_\eps
	\big(
 \|  \Lsct^{s}  F^d \|_+
+ \|  \Lsct^{s}  G\|_+  + \|  \Lsct^{s}    \rconj  \|_+  + \| \Lsct^{s}   v^d  \|_+
+\| \Lsct^{s}  \tilde F  \|_+\big)^2.
\end{align}
By the linear  combination \eqref{est square root 2}$+\eps$\eqref{est. square root 1}  and fixing $\eps$ 
sufficiently small, from \eqref{hyp: mu eq tau}  and  $\tau$ sufficiently large, we deduce
\begin{multline}
	\label{eq: alpha = 0}
\tau\|\Lsct^{s}\Opt(\chi_1) v^d  \|_+
+ |\Lsct^{s+1/2}\Opt(\chi_1)\rconj_{|x_d=0}| +  \| \Lsct^{s+1}\Opt(\chi_1)\rconj \|_+\\
\lesssim 
 \|  \Lsct^{s}  F^d \|_+
+ \|  \Lsct^{s}  G\|_+   
    + \|  \Lsct^{s}    \rconj  \|_+
 + \| \Lsct^{s}   v^d  \|_+
+\| \Lsct^{s}  \tilde F  \|_+   . 
\end{multline}
From the first equation of \eqref{eq: reslov bd 3}  and from \eqref{hyp: mu eq tau} we have 
\begin{align}
	\label{eq: alpha = 0Bis}
\tau \| \Lsct^{s}\Opt (\chi_1) \tilde v\|_+
&\lesssim \| \Lsct^{s+1} \Opt (\chi_1) \rconj\|_++ \| \Lsct^{s} \tilde F\|_+ 
+ \| \Lsct^{s}   \rconj\|_+ \\
&\lesssim 
 \|  \Lsct^{s}  F^d \|_+
+ \|  \Lsct^{s}  G\|_+   
+\| \Lsct^{s}  \tilde F  \|_+ 
    + \|  \Lsct^{s}    \rconj  \|_+
 + \| \Lsct^{s}   v^d  \|_+
  , \notag
\end{align}
from \eqref{eq: alpha = 0}.
From \eqref{eq: alpha = 0} and \eqref{eq: alpha = 0Bis} we obtain Lemma \ref{lem: Carleman microloc} in the case $\alpha=0$.
\section{Logarithmic stability} \label{section4}
\setcounter{equation}{0}

The exponential estimate of Proposition~\ref{prop: resolv exp} is the consequence of the two following results. First a global 
Carleman estimate with an observability term and second an estimate of the observability term coming from the dissipation.

Let $\omega_0 $ and $\omega_1$ be open sets such that $\omega_1\Subset\omega_0\Subset \omega$, and,  from \eqref{eq:a},  we have $b(x)\ge b_{-} >0$ for $x\in\omega$. In what follows we denote by $\| .\|_0:=\| .\|_{L^2(\Omega)}$.
 %
 %
\begin{thm}
	\label{th: global carleman}
	Let $\Omega$ be an open bounded set of $\mathbb{R}^{d}$ with smooth boundary. Let $\varphi\in\mathscr{C}(\R^d)$
	 be a function 
	that satisfies the  sub-ellipticity assumption in $\overline{\Omega}\setminus \omega_1$. Then there exist 
	$\tau_{*}>0$ and $C>0$ such that
\begin{equation*}
\tau^{3/2}\|e^{\tau\varphi} r\|_0 +\tau^{3/2}\|e^{\tau\varphi} u\|_0 \leq C\left(\tau\|e^{\tau\varphi}g\|_{0}+\tau\|e^{\tau\varphi}f\|_{0}  
 +   \tau^{3/2}\|e^{\tau\varphi} r\|_{L^2(\omega_0)} +\tau^{3/2}\|e^{\tau\varphi} u\|_{L^2(\omega_0)}  \right),
\end{equation*}
for all $u, r \in\mathscr{C}_{c}^{\infty}(\overline{\Omega})$ which 
satisfies \eqref{eq: reslov bd}, $u \cdot n|_{\Gamma} = 0 $, $\tau\geq\tau_{*}$,  and $\mu$ 
satisfying \eqref{hyp: mu eq tau}.
\end{thm}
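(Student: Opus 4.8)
The strategy is a standard patching argument: glue together the interior Carleman estimate (Corollary~\ref{cor: Carleman interior H-1}) on the part of $\Omega$ away from the boundary with the boundary Carleman estimate (Theorem~\ref{th: Carleman bord}) near $\Gamma$, using a partition of unity, and absorb the resulting commutator (cut-off) errors. First I would cover the compact set $\overline\Omega\setminus\omega_1$ by finitely many open sets: finitely many balls $B_j\Subset\Omega$ contained in $\Omega\setminus\omega_1$ on which the interior estimate applies, and finitely many neighborhoods $V_k$ of boundary points $x_k\in\Gamma$ of the type produced by Theorem~\ref{th: Carleman bord} (note that by hypothesis $\varphi$ satisfies the sub-ellipticity condition on $\overline\Omega\setminus\omega_1$, and one additionally needs $\partial_{x_d}\varphi(x_k)>0$ at each boundary point; this is where one invokes that $\varphi=e^{\lambda\psi}$ with $\psi$ having no critical points and, say, $\partial_n\psi<0$ on $\Gamma$, giving $\partial_{x_d}\varphi>0$ in the inward normal coordinate — this choice of $\psi$ is implicitly available since $\omega_1\ne\emptyset$). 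Together with $\omega_1$ itself these sets cover $\overline\Omega$; take a subordinate partition of unity $\{\chi_\ell\}$ with $\sum\chi_\ell=1$ near $\overline\Omega$ and $\chi_0$ supported in $\omega_0$ equal to $1$ on a neighborhood of $\omega_1$.

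The main computation is then: apply the appropriate local estimate to $\chi_\ell u,\chi_\ell r$. Since $(u,r)$ solves \eqref{eq: reslov bd}, the pair $(\chi_\ell u,\chi_\ell r)$ solves the same system with right-hand sides $\chi_\ell f+(\text{terms involving }\nabla\chi_\ell\cdot r,\ r\nabla\chi_\ell)$ and $\chi_\ell g+(\nabla\chi_\ell\cdot u)$; on the boundary pieces the condition $u\cdot n|_\Gamma=0$ is inherited by $\chi_\ell u$. Summing the local estimates, the left side controls $\tau^3\|e^{\tau\varphi}r\|_0^2+\tau^3\|e^{\tau\varphi}u\|_0^2$ (interior terms) plus the boundary-piece left sides; the right side produces $C\tau^2(\|e^{\tau\varphi}f\|_0^2+\|e^{\tau\varphi}g\|_0^2)$ plus the commutator terms. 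The commutator terms are supported where $\nabla\chi_\ell\ne0$, i.e. in the overlaps; crucially these overlaps can be arranged to lie either inside $\omega_0$ (near $\omega_1$, where they are absorbed into the observation term $\tau^3\|e^{\tau\varphi}r\|_{L^2(\omega_0)}^2+\tau^3\|e^{\tau\varphi}u\|_{L^2(\omega_0)}^2$) or inside regions covered by another chart of the \emph{same} (interior) type, so the commutator term $\tau\|e^{\tau\varphi}\nabla r\|_0^2+\tau^{-1}\|e^{\tau\varphi}D^2 r\|_0^2+\dots$ is dominated, for $\tau$ large, by a small multiple of the left-hand side of a neighboring estimate and absorbed. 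A minor technical point: the interior and boundary estimates carry different powers of $\tau$ on the right ($\tau^2$ versus $\tau^1$), so one uses the weaker $\tau^2$ uniformly; and the boundary estimate controls only $\tau^{-1/2}\|e^{\tau\varphi}\nabla_g r\|_+$, which near the boundary must be fed back using the first equation $\nabla r=i\mu u-f$ to recover control of $u$ there, exactly as in Corollary~\ref{cor: Carleman interior H-1}.

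Taking square roots of the summed inequality yields the stated estimate. \textbf{The main obstacle} I anticipate is the bookkeeping of the cut-off (commutator) errors: one must choose the partition of unity so that every term $\propto|\nabla\chi_\ell|$ is supported either in $\omega_0$ or in a region where a \emph{higher-order-controlling} local estimate (one that dominates $\nabla r$, $D^2 r$) is available, and then check that the $\tau$-powers line up so these errors are genuinely absorbed for $\tau\ge\tau_*$ rather than merely bounded. A secondary subtlety is verifying the hypothesis $\partial_{x_d}\varphi>0$ at each boundary point used — i.e. that the global weight $\varphi$ can be chosen (with $\psi$ having no critical points in $\overline\Omega\setminus\omega_1$ and pointing correctly at $\Gamma$) so that Theorem~\ref{th: Carleman bord} is applicable at \emph{every} point of $\Gamma$ simultaneously; this is the standard Lebeau–Robbiano weight construction and I would cite it rather than redo it.
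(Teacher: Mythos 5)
Your outline is essentially the paper's proof: cover the compact set $\overline{\Omega}\setminus\omega_1$ by finitely many neighborhoods on which either Corollary~\ref{cor: Carleman interior H-1} (interior points) or Theorem~\ref{th: Carleman bord} (boundary points, for the weight $\varphi=e^{\lambda\psi}$ with $\partial_n\psi<0$ on $\Gamma$, fixed in the remark following the statement, which indeed gives $\pd_{x_d}\varphi>0$ in normal geodesic coordinates) yields a local estimate of the form $\tau^{3/2}\|e^{\tau\varphi}r\|_0+\tau^{3/2}\|e^{\tau\varphi}u\|_0\lesssim \tau\|e^{\tau\varphi}g\|_0+\tau\|e^{\tau\varphi}f\|_0$ for data supported there; then apply it to $(\chi_j u,\chi_j r)$ for a partition of unity with $\sum_j\chi_j=1$ on $\overline{\Omega}\setminus\omega_1$, sum, and let the observation terms over $\omega_0$ account for the region near $\omega_1$. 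The one substantive divergence is your treatment of the cutoff errors, which you single out as the main obstacle: because the local estimates are applied to the \emph{first-order} system \eqref{eq: reslov bd}, the commutators are exactly the zeroth-order terms $-r\nabla\chi_j$ and $-u\cdot\nabla\chi_j$ that you yourself wrote down; no terms in $\nabla r$ or $D^{\alpha}r$, $|\alpha|=2$, ever arise (those would appear only if one localized the second-order equation \eqref{ASCE2}, which is not what is done at this stage). These errors therefore enter the right-hand side simply as $\tau\|e^{\tau\varphi}r\|_0+\tau\|e^{\tau\varphi}u\|_0$ and are absorbed by the $\tau^{3/2}$ terms on the left once $\tau\ge\tau_*$; no geometric arrangement of the overlaps and no appeal to a neighboring ``higher-order-controlling'' chart is needed (and the latter mechanism, as described, would not work for an error term of an inequality rather than a solution of the equation). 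Two minor corrections: Theorem~\ref{th: Carleman bord} already carries $\tau^{1/2}\|e^{\tau\varphi}u\|_+$ on its left-hand side, so no feedback through the first equation is required at the gluing stage (that step is internal to the proofs of the local estimates), and after multiplying the boundary estimate by $\tau$ both local estimates exhibit the same $\tau^{1/2}$ gap between left- and right-hand sides, so nothing is lost in combining them.
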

\begin{rem}   
It is classical that there exist $\psi$ such that $\partial_n \psi(x)<0$ for $x\in\partial \Omega$ and $\nabla \psi\ne0$ for 
$x\in \overline{\Omega}\setminus \omega_1$ (see Fursikov-Imanuvilov~\cite{FI:96}).  From Lemma~\ref{-11},  $\varphi=e^{\lambda\psi}$ satisfies 
sub-ellipticity condition 
in $ \overline{\Omega}\setminus \omega_1$ for $\lambda$ sufficiently large. In what follows we fix such a function $\varphi$.
\end{rem}
 %
 %
\begin{prop}
	\label{prop: dissipation est.}
Let $(u,r)\in {\mathcal D}({\mathcal A})$ solution of $({\mathcal A}_d +i\mu )(u,r)=(f,g)\in H$.
Then we have
\begin{align}
	\label{est: dissipation est.}
&|\mu|  \| \sqrt{b}u\|_0^2\le C\| ( u,r)\|_H\|(f,g)\|_H  \\
& |\mu|  \| r\|_{L^2(\omega_0)}^2\le C\| ( u,r)\|_H   \|(f,g)\|_H ,  \notag
\end{align}
for some constant $C>0$.
\end{prop}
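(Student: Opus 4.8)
The plan is to read the two bounds off the resolvent system, which by definition of $\mathcal A_d$ is
\[
-\nabla r-bu+i\mu u=f,\qquad -\div u+i\mu r=g\quad\text{in }\Omega,\qquad u\cdot n=0\ \text{on }\Gamma ,
\]
through a short chain of "multiplier" identities. First I would take the $H$-inner product of $(\mathcal A_d+i\mu)(u,r)=(f,g)$ with $(u,r)$: since $\mathcal A$ is skew-adjoint on $\mathcal D(\mathcal A)$ (the boundary integral $\int_\Gamma (u\cdot n)\bar r$ vanishes) and $\mu\in\mathbb R$, both $\langle\mathcal A(u,r),(u,r)\rangle_H$ and $\langle i\mu(u,r),(u,r)\rangle_H$ are purely imaginary, whereas $\langle\mathcal B\mathcal B^*(u,r),(u,r)\rangle_H=\|\sqrt b\,u\|_0^2$. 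Taking real parts yields the dissipation identity $\|\sqrt b\,u\|_0^2=-\Re\langle(f,g),(u,r)\rangle_H$, hence already $\|\sqrt b\,u\|_0^2\le\|(u,r)\|_H\|(f,g)\|_H$.

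To upgrade this to the $|\mu|$–weighted bound I would multiply the first equation by $\overline{bu}$, integrate over $\Omega$, and take imaginary parts; since $\int_\Omega b^2|u|^2$ is real this gives
\[
\mu\,\|\sqrt b\,u\|_0^2=\Im\!\int_\Omega (bf)\cdot\bar u\,\ud x+\Im\!\int_\Omega (b\,\nabla r)\cdot\bar u\,\ud x .
\]
The first term is $\le\|b\|_{L^\infty}\|f\|_0\|u\|_0\le\|b\|_{L^\infty}\,|\mu|\,\|(u,r)\|_H\|(f,g)\|_H$ because $|\mu|\ge1$. For the second, I would integrate by parts to move $\nabla$ off $r$ (no boundary contribution, again by $u\cdot n=0$) and then use the second equation $\div u=i\mu r-g$; the point is to keep $r$, not $\nabla r$, after the integration by parts, so that what remains is controlled by $|\mu|\,\|r\|_0\|u\|_0$ plus data terms, i.e. by $C\,|\mu|\,\|(u,r)\|_H\|(f,g)\|_H$. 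As both sides carry exactly one power of $\mu$, this gives the first inequality. In particular, since $b\ge b_-$ on $\omega$, $|\mu|\,\|u\|_{L^2(\omega)}^2\le b_-^{-1}\,|\mu|\,\|\sqrt b\,u\|_0^2\le C\,\|(u,r)\|_H\|(f,g)\|_H$, so $\|u\|_{L^2(\omega)}\le C\,|\mu|^{-1/2}\big(\|(u,r)\|_H\|(f,g)\|_H\big)^{1/2}$.

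For the second inequality I would transfer this information from $u$ to $r$ with a cut-off. Fix $\chi\in\mathscr{C}_{c}^{\infty}(\omega)$ with $0\le\chi\le1$ and $\chi\equiv1$ on $\omega_0$. From the second equation, $i\mu\,\|\chi r\|_0^2=\int_\Omega\chi^2\bar r\,(\div u+g)\,\ud x$; integrating by parts in the divergence term and substituting $\overline{\nabla r}=-(i\mu+b)\bar u-\bar f$ from the (conjugated) first equation turns this into
\[
i\mu\,\|\chi r\|_0^2=i\mu\,\|\chi u\|_0^2+\int_\Omega\chi^2 b|u|^2\,\ud x-2\int_\Omega\chi\,\bar r\,(\nabla\chi\cdot u)\,\ud x+\int_\Omega\chi^2\bar f\cdot u\,\ud x+\int_\Omega\chi^2\bar r\,g\,\ud x .
\]
Taking moduli, I would bound $|\mu|\,\|\chi u\|_0^2\le|\mu|\,\|u\|_{L^2(\omega)}^2$ and $\int_\Omega\chi^2 b|u|^2$ by the previous step, estimate the two data terms by Cauchy–Schwarz, and absorb the $\nabla\chi$–cross term via Young's inequality together with the smallness $\|u\|_{L^2(\omega)}=O(|\mu|^{-1/2})$ just obtained; this yields $|\mu|\,\|r\|_{L^2(\omega_0)}^2\le|\mu|\,\|\chi r\|_0^2\le C\,\|(u,r)\|_H\|(f,g)\|_H$.

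I expect the delicate point to be the extraction of the factor $|\mu|$ in the second step. Every "symmetric" pairing of the system — against $(u,r)$, $i\mu(u,r)$, $\mathcal A_d(u,r)$, $\nabla r$, $\div u$, … — collapses to the very same identity $\|\sqrt b\,u\|_0^2=-\Re\langle(f,g),(u,r)\rangle_H$, with all powers of $\mu$ cancelling; the extra $|\mu|$ can only be produced by a pairing that breaks this symmetry, namely against the weight $b$ itself, and one must then organise the remaining term $\Im\int_\Omega b\,\nabla r\cdot\bar u$ with care — noting that the first equation forces $\nabla r=(i\mu-b)u-f$, so $\nabla r$ is comparable to $i\mu u$ on $\{b>0\}$ — so that the newly gained factor $|\mu|$ is not immediately fed back into $\|\sqrt b\,u\|_0^2$.
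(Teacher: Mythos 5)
Your opening computation (taking the real part of the $H$-pairing of the resolvent equation with $(u,r)$) is exactly the paper's, and it proves $\|\sqrt b\,u\|_0^2\le\|(u,r)\|_H\|(f,g)\|_H$, i.e. the first inequality \emph{without} the factor $|\mu|$. The genuine gap is in your upgrade step. First, $b$ is only assumed to be in $L^\infty(\Omega)$, so the integration by parts in $\Im\int_\Omega b\,\nabla r\cdot\bar u\,\ud x$, which requires $\nabla b$, is not available. Second, even for smooth $b$, carrying out your plan (move the gradient off $r$, then insert $\div u=i\mu r-g$) gives exactly
$\Im\int_\Omega b\,\nabla r\cdot\bar u\,\ud x=\mu\|\sqrt b\,r\|_0^2-\Im\int_\Omega r\,\nabla b\cdot\bar u\,\ud x+\Im\int_\Omega b\,r\,\bar g\,\ud x$,
so your identity becomes $\mu\|\sqrt b\,u\|_0^2-\mu\|\sqrt b\,r\|_0^2=\dots$: the $\mu$-weighted terms cancel against each other rather than being controlled, and your claim that a term of size $|\mu|\,\|r\|_0\|u\|_0$ is ``controlled by $C|\mu|\,\|(u,r)\|_H\|(f,g)\|_H$'' is a non sequitur --- it is only controlled by $|\mu|\,\|(u,r)\|_H^2$, which is useless here. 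In fact no argument can produce the $|\mu|$-weighted bound in this generality: take $(u,r)$ an eigenmode of the conservative operator ${\mathcal A}$ with eigenvalue $i\mu$ (so $r$ is a Neumann eigenfunction, $u=\nabla r/(i\mu)$, $u\cdot n=0$); then $({\mathcal A}_d+i\mu)(u,r)=(-bu,0)$, so $\|(u,r)\|_H$ and $\|(f,g)\|_H$ stay of order one while $|\mu|\,\|\sqrt b\,u\|_0^2\sim|\mu|$ for modes whose mass does not avoid $\supp b$. Since your proof of the second inequality feeds the bound $\|u\|_{L^2(\omega)}=O(|\mu|^{-1/2})$ into the cutoff identity, it inherits the same gap.

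For comparison: the paper's own proof consists of precisely your two identities (pairing the first equation with $u$; pairing the second with $\chi^2 r$ and substituting $\nabla r=(i\mu-b)u-f$), and its computation likewise only yields the unweighted estimates $\|\sqrt b\,u\|_0^2\lesssim\|(u,r)\|_H\|(f,g)\|_H$ and $\|r\|_{L^2(\omega_0)}^2\lesssim\|(u,r)\|_H\|(f,g)\|_H$; the factor $|\mu|$ appearing in the statement is not actually produced by the argument there, and only the unweighted bounds are used in the proof of Proposition~\ref{prop: resolv exp}, where the Carleman estimate already carries a factor $e^{K|\mu|}$ absorbing any power of $\mu$. So the correct repair of your proposal is not to seek the extra $|\mu|$ at all: keep your first paragraph and your cutoff identity, bound $|\mu|\|\chi u\|_0^2\lesssim|\mu|\,b_-^{-1}\|\sqrt b\,u\|_0^2$ and the $\nabla\chi$ cross term using only the unweighted dissipation bound, and you obtain $\|\sqrt b\,u\|_0^2+\|r\|_{L^2(\omega_0)}^2\lesssim\|(u,r)\|_H\|(f,g)\|_H$, which is what the rest of the paper actually requires.
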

From these two results we are able to prove Proposition~\ref{prop: resolv exp}.
%
%
\begin{proof}[Proof of Proposition~\ref{prop: resolv exp}] 
Noting that the resolvent problem $(\mathcal{A}-i\mu)(u,r)=(f,g)$ is written as follow
\begin{equation*}
\left\{\begin{array}{ll}
\nabla r+i\mu u=f-bu&\text{in }\Omega
\\
\div(u)+i\mu r=g&\text{in }\Omega
\\
u.n=0&\text{on }\Gamma.
\end{array}\right.
\end{equation*}
This allows us to apply Theorem~\ref{th: global carleman}. So let $C_2= \max_{x\in\overline{\Omega}} \varphi(x)$ and $C_1=\min_{x\in\overline{\Omega}} \varphi(x)$ we deduce from the Carleman
 estimate of Theorem~\ref{th: global carleman} that 
 \begin{equation}
\| r\|_0 +\| u\|_0 \lesssim e^{(C_2-C_1 )\tau}\left(\|g\|_{0}+\| f+bu\|_{0}  
 +  \|r\|_{L^2(\omega_0)} +\|u\|_{L^2(\omega_0)}  \right).
 \end{equation}
 Taking $\tau=|\mu|/c_0$ accordingly with \eqref{hyp: mu eq tau},  by the estimates of Proposition~\ref{prop: dissipation est.} and
as  $\| bu\|_{0} \lesssim  \| \sqrt{b}u\|_{0} $, 
 we have
  \begin{equation}
\| ( u,r)\|_H \lesssim Ce^{K|\mu|}\left(\|(f,g)\|_H  
 +\| ( u,r)\|_H^{1/2}\|(f,g)\|_H  ^{1/2} \right),
  \end{equation}
which yields $\| ( u,r)\|_H \lesssim e^{K'|\mu|}\|(f,g)\|_H  $. This is the sought result.
\end{proof}

%
%
\begin{proof}[Proof of Proposition~\ref{prop: dissipation est.}]
 From equation, we have $-\nabla r+ i\mu u-bu=f$ taking the inner product with $u$, we obtain
 $ - (\nabla r,u)  +i\mu\|u \|^2 - (bu,u) =(f,u)$. Integrating by parts, we have $- (\nabla r,u) =(r,\div u)$ as 
 $u\cdot n=0$ on $\partial\Omega$.  Using the second equation $-\div u+i\mu r=g $, we have $- (\nabla r,u) =(r,i\mu r -g)$.
We thus obtain
\begin{equation*}
 - i\mu\|r\|^2   - (r,g) +   i\mu\|u \|^2- (bu,u) =(f,u).
\end{equation*}
Taking the real part of this equation we have $|\mu| \|\sqrt{b}u\|^2\le |(f,u)| +|  (r,g) |$. This implies the first estimate 
of~\eqref{est: dissipation est.}.

Let $\chi\in \mathscr{C}_{c}^{\infty}(\R^d)$ such that $\chi(x)=1$ for $x\in\omega_0$ and $\chi$ supported in $\omega$.
Taking the inner product between   $-\div u+i\mu r=g $ and $\chi^2r$, we obtain $ (-\div u,\chi^2r)+i\mu \| \chi r\|^2=(g,\chi^2r)$.
Integrating by parts we have $ (-\div u,\chi^2r)= ( u,\chi^2\nabla r) + ( u,2\chi   r\nabla\chi)$ and by equation 
$-\nabla r+ i\mu u-bu=f$ we have
\begin{equation*}  
 - i\mu\| \chi u \|  ^2    -    ( u, \chi^2 bu )  - ( u,\chi^2f)       + ( u,2\chi   r\nabla\chi)+i\mu \| \chi r\|^2=(g,\chi^2r).
\end{equation*}
Taking account that $b\ge b_- $ in $\omega$, thus on the support of $\chi$, we have
\begin{equation*}  
|\mu| \| \chi r\|^2\lesssim \| ( u,r)\|_H\|(f,g)\|_H  + \|  u \nabla\chi\|  \| \chi r\|+|\mu |\| \chi u \|  ^2+  \| \sqrt{b}u\|^2 .
\end{equation*}
We can estimate $\|  u \nabla\chi\|$ and $ \| \chi u \| $ by $  \| \sqrt{b}u\|$ and by the first estimate of 
Proposition~\ref{prop: dissipation est.} we obtain the second estimate.
\end{proof}

%
%
\begin{proof}[Proof of Theorem~\ref{th: global carleman}] 
Let $x_0\in \overline{\Omega}\setminus \omega_1$, from Corollary~\ref{cor: Carleman interior H-1} if $x_0\in\Omega$ or 
from Theorem~\ref{th: Carleman bord} if $x_0\in\partial\Omega$ we obtain, in both cases,  an open neighborhood  
(in $\R^d$) of 
$x_0$, $V$   such that
\begin{equation}
	\label{est: Carleman local to global}
\tau^{3/2}\|e^{\tau\varphi} r\|_0 +\tau^{3/2}\|e^{\tau\varphi} u\|_0 \leq C\left(\tau\|e^{\tau\varphi}g\|_{0}+\tau\|e^{\tau\varphi}f\|_{0}  
  \right),
\end{equation}
for $u,  r\in\mathscr{C}_{c}^{\infty}(V)$. By compactness of $ \overline{\Omega}\setminus \omega_1$ we can find a finite recovering
$(V_j)_{j\in J}$ of $ \overline{\Omega}\setminus \omega_1$. Let $(\chi_j )_{j\in J}$ be a partition of unity subordinated 
to $(V_j)_{j\in J}$ such that $\sum_{j\in J}\chi_j(x)=1$ for $x\in  \overline{\Omega}\setminus \omega_1$. 
Let $u_j=\chi_j u$ and $r_j=\chi_j r$ where $(u,r)$ solution to \eqref{eq: reslov bd}, $u \cdot n|_{\Gamma} = 0 $.
We have 
\begin{align*}
&    -\nabla r_j+i\mu u_j=\chi_j f-r\nabla \chi_j \\
&   -\div u_j +i\mu r_j=\chi_j g-u\cdot \nabla \chi_j.
\end{align*}
We can apply the  Carleman estimate~\eqref{est: Carleman local to global} in each $V_j$ and we obtain
\begin{align*}
\tau^{3/2}\|e^{\tau\varphi} r_j\|_0 +\tau^{3/2}\|e^{\tau\varphi} u_j\|_0
&\lesssim \tau\|e^{\tau\varphi}(\chi_j g-u\cdot \nabla \chi_j)\|_{0}+\tau\|e^{\tau\varphi}(\chi_j f-r\nabla \chi_j)\|_{0}  \\
&\lesssim \tau\|e^{\tau\varphi}g\|_{0}
+\tau\|e^{\tau\varphi} u\|_{0}
+\tau\|e^{\tau\varphi}f\|_{0} 
+\tau\|e^{\tau\varphi}r\|_{0} .
\end{align*}
We have 
\begin{align*}
&\tau^{3/2}\|e^{\tau\varphi} r\|_0 +\tau^{3/2}\|e^{\tau\varphi} u\|_0  \\
&\qquad  \lesssim \tau^{3/2}\sum_{j\in J}\big( \|e^{\tau\varphi} r_j\|_0 +\|e^{\tau\varphi} u_j\|_0 \big)
+ \tau^{3/2}\|e^{\tau\varphi} r\|_{L^2(\omega_0)} +\tau^{3/2}\|e^{\tau\varphi} u\|_{L^2(\omega_0)}   \\
&\qquad  \lesssim   \tau\|e^{\tau\varphi}g\|_{0}
+\tau\|e^{\tau\varphi} u\|_{0}
+\tau\|e^{\tau\varphi}f\|_{0} 
+\tau\|e^{\tau\varphi}r\|_{0}
+ \tau^{3/2}\|e^{\tau\varphi} r\|_{L^2(\omega_0)} +\tau^{3/2}\|e^{\tau\varphi} u\|_{L^2(\omega_0)} .
\end{align*}
This gives the sought result as we can absorb the term $\tau\|e^{\tau\varphi} u\|_{0}+\tau\|e^{\tau\varphi}r\|_{0} $
with the left hand side.
\end{proof}



\begin{thebibliography}{10}

\bibitem{Allaire:91} {\sc G.~Allaire,} Homogenization of the {N}avier-{S}tokes equations and derivation of {B}rinkman's law, {\em Math\'ematiques appliqu\'ees aux sciences de l'ing\'enieur ({S}antiago, 1989), C\'epadu\`es, Toulouse,} (1991), 7--20.
	
\bibitem{AFN} {\sc K.~Ammari, E.~Feireisl and S.~Nicaise,} Polynomial stabilization of some dissipative hyperbolic systems, {\em DCDS-A,} {\bf 34} (2014), 4371--4388.

\bibitem{ammari:01} {\sc K.~Ammari and M.~Tucsnak,} Stabilization of second order evolution equations by a class of unbounded feedbacks, {\em ESAIM Control Optim. Calc. Var.,} {\bf 6} (2001), 361--386.

\bibitem{AngotBruneauFabrie:99} {\sc P.~Angot, C.-H. Bruneau, and P.~Fabrie,} A penalization method to take into account obstacles in incompressible viscous flows, {\em Numer. Math.,} {\bf 81} (1999), 497--520.

\bibitem{arendt:88} {\sc W.~Arendt and C.~J.~K. Batty,} Tauberian theorems and stability of one-parameter semigroups, {\em Trans. Amer. Math. Soc.,} {\bf 305} (1988), 837--852.

\bibitem{burq} {\sc N.~Burq,} D\'ecroissance de l'\'energie locale de l'\'equation des ondes pour le probl\`eme ext\'erieur et absence de r\'esonance au voisinage du r\'eel, {\em Acta Math.,} {\bf 180} (1998), 1--29.

\bibitem{batty} {\sc C. J. K.~Batty and T.~Duyckaerts,} Non-uniform stability for bounded semi-groups on Banach spaces, {\em J. Evol. Equ.,} {\bf 8} (2008), 765--780.

\bibitem{calderon} {A.-P.~Calder\'on,} Uniqueness in the Cauchy problem for partial differential equations, {\em Amer. J. Math.,} {\bf 80} (1958), 16--36.

\bibitem{carleman} {\sc T.~Carleman,} Sur un probl\`eme d'unicit\'e pour les syst\`emes d'equations aux d\'eriv\'ees partielles \`a deux variables ind\'ependantes, {\em Ark. Mat., Astr. Fys.,} {\bf 26} (1939), 1--9.

\bibitem{Duyckaerts} {\sc T.~Duyckaerts,} Optimal decay rates of the energy of a hyperbolic-parabolic system coupled by an interface {\em Asymptot. Anal.,} {\bf 51} (2007), 17--45.

\bibitem{eller-toundykov} {\sc M.~Eller and D.~Toundykov,} Carleman estimates for elliptic boundary value problems 
with applications to the stabilization of hyperbolic systems. Evolution Equations \& Control Theory, {\bf 1} (2012), 271--296.


\bibitem{FI:96} {\sc A.~Fursikov  and O.~Y.~Imanuvilov,}
 Controllability of evolution equations, vol.~34. Seoul National University, Korea, 1996. Lecture notes.

\bibitem{Girault-Raviart} {\sc V.~Girault and P.-A.~Raviart,} Finite element methods for {N}avier-{S}tokes equations, 
Springer Series in Computational Mathematics, Springer-Verlag, Berlin, 1986.

\bibitem{hormander} {\sc L.~H\"ormander,} {\em Linear Partial Differential Operators,} Die Grundlehren der mathematischen Wissenschaften, Bd. 116. Academic Press Inc., Publishers, New York, 1963.

\bibitem{isakov} {\sc V.~Isakov,} Carleman type estimates in an anisotropic case and applications, {\em J. Differential Equations}, {\bf 105} (1993), 217--238.

\bibitem{kreiss} {\sc H.-O.~Kreiss,} Initial boundary value problems for hyperbolic systems, {\em Comm. Pure Appl.
Math.}, {\bf 23} (1970), 277--298.

\bibitem{lebeau} {\sc G.~Lebeau,} \'Equations des ondes amorties,  Algebraic and geometric methods in mathematical physics (Kaciveli, 1993), 73--109, Math. Phys. Stud., 19, Kluwer Acad. Publ., Dordrecht, 1996.

\bibitem{lebeau-robbiano1} {\sc G.~Lebeau and \sc L.~Robbiano,} Contr\^ole exacte de l'\'equation de la chaleur, {\em Comm. Partial Differential Equations,} {\bf 20} (1995), 335--356.

\bibitem{lebeau-robbiano2} {\sc G.~Lebeau and \sc L.~Robbiano,} Stabilisation de l'\'equation des ondes par le bord, {\em Duke Math. J.,} {\bf 86} (1997), 465--491.

\bibitem{LLR:2020} {\sc G.~Lebeau, J.~Le~Rousseau and L.~Robbiano,} {\em Elliptic Carleman estimates and applications to  stabilization and controllability,} to appear.

\bibitem{Lighthill:78} {\sc J.~Lighthill,} {\em Waves in fluids,} Cambridge University Press, Cambridge, 1978.

\bibitem{Lighthill:52} {\sc M.~J. Lighthill,} On sound generated aerodynamically. {I}. {G}eneral theory, {\em Proc. Roy. Soc. London. Ser. A.,} {\bf 211} (1952), 564--587.

\bibitem{Lighthill:54} {\sc M.~J. Lighthill,} On sound generated aerodynamically. {II}. {T}urbulence as a source of sound, {\em Proc. Roy. Soc. London. Ser. A.,} {\bf 222} (1954), 1--32.

\end{thebibliography}
\end{document}